\title{Uniform linear embeddings of graphons}
\author[Chuangpishit et al.]{Huda Chuangpishit \and Mahya Ghandehari \and Jeannette Janssen}
\address{Department of Mathematics \& Statistics, Dalhousie University, Halifax, Nova Scotia, Canada, B3H 3J5}{
\address{Department of Mathematical Science, University of Delaware, Newark, DE 19716, USA}
\subjclass{Primary 46L07, 47B47.} 
\keywords{spatial graph model, graphon, linear embedding, random graphs, geometric graph}


\date{\today}

\newcommand{\ignore}[1]{}
\newtheorem{theorem}{Theorem}[section]
\newtheorem{corollary}[theorem]{Corollary}
\newtheorem{case}[theorem]{Exception}
\newtheorem{definition}[theorem]{Definition}
\newtheorem{observation}[theorem]{Observation}

\newtheorem{claim}[theorem]{Claim}
\newtheorem{assumption}[theorem]{Assumption}

\newtheorem{lemma}[theorem]{Lemma}
\newtheorem{proposition}[theorem]{Proposition}

\newcommand{\cal}{\mathcal}
\newcommand{\Rrr}{{\mathbb R}}


\def\sup{{\rm sup}}
\def\range{{\rm range}}

\def\max{{\rm max}}
\def\min{{\rm min}}

\def\P{{ P}}
\def\O{{\mathcal O}}

\def\Q{{ Q}}

\def\Nnn {{\mathbb N}}

\def\dom {{\rm dom}}
\def\range{{\rm range}}

\begin{document}

\begin{abstract} 
Let  $w:[0,1]^2\rightarrow [0,1]$ be a  symmetric function, and consider the random process $G(n,w)$, where vertices are chosen from $[0,1]$ uniformly at random, and $w$ governs the   edge formation probability. Such a random graph is said to have a linear embedding, if the probability of linking to a particular vertex $v$ decreases with distance. The rate of decrease, in general, depends on the particular vertex $v$. A linear embedding is called uniform if the probability of a link between two vertices depends only on the distance between them. 
In this article, we consider the question whether it is possible to ``transform'' a linear embedding to a uniform one,  through replacing the uniform probability space $[0,1]$ with a suitable probability space on ${\mathbb R}$. 
We give necessary and sufficient conditions for the existence of a uniform linear embedding for random graphs where $w$ attains only a finite number of values.
Our findings show that for a general $w$ the answer is negative  in most cases. 
\end{abstract}

\maketitle

\section{Introduction}
In the study of large, real-life networks such as on-line social networks and hyperlinked ``big data" networks, biological networks and neural connection networks, link formation is often modelled as a stochastic process. The underlying assumption of the link formation process is that vertices have an {\sl a priori} identity and relationship to other vertices, which informs the link formation. These identities and relationships can be captured through an embedding of the vertices in a metric space, in such a way that the distance between vertices in the space reflects the similarity or affinity between the identities of the vertices. Link formation is assumed to occur mainly between vertices that have similar identities, and thus are closer together in the metric space.
We take as our point of departure a very general stochastic graph model that fits the broad concept of graphs stochastically derived from a spatial layout, along the same principles as described above. We refer to this model as a {\sl spatial random graph}. In a spatial random graph, vertices are embedded in a metric space, and the {\sl link probability} between two vertices depends on this embedding in such a way that vertices that are close together in the metric space are more likely to be linked. 

The concept of a spatial random graph allows for the possibility that the link probability depends on the spatial position of the vertices, as well as their metric distance. Thus, in the graph we may have tightly linked clusters for two different reasons. On the one hand, such clusters may arise when vertices are situated in a region where the link probability is generally higher. On the other hand, clusters can still arise when the link probability function is {\sl uniform}, in the sense that the probability of a link between two vertices depends only on their distance, and not on their location. In this case, tightly linked clusters can arise if the distribution of vertices in the metric space is inhomogeneous. 
The central question addressed in this paper is how to recognize spatial random graphs with a uniform link probability function.

For our study we ask this question for a general edge-independent random graph model which generalizes the Erd\H{o}s-Renyi random graph $G(n,p)$.  Let ${\cal W}_0$ be the set of symmetric, measurable functions from $[0,1]^2$ to $[0,1]$, and let $w\in {\cal W}_0$. The $w$-random graph $G(n,w)$ is the graph with vertex set $\{ 1,2,\dots ,n\}$ where edges are formed according to a two-step random process. First, each vertex $i$ receives a label $x_i$ drawn uniformly from $[0,1]$. Then, for each pair of vertices $i<j$ independently, an edge $\{ i,j\}$ is added with probability $w(x_i,x_j)$.
Alternatively, the random graph $G(n,w)$ may be seen as a one-dimensional spatial model, where the label $x_i$ represents the coordinate of vertex $i$. In that case, the process $G(n,w)$  can be described as follows: a set $P$ of $n$ points is chosen uniformly from the metric space $[0,1]$. Any two points $x,y\in P$ are then linked with probability $w(x,y)$. For $G(n,w)$ to correspond to the notion of a spatial random graph, $w$ must satisfy a certain type of monotonicity. This is captured by the following definition.

\begin{definition}
\label{def:di}
A function $w\in {\cal W}_0$ is \emph{diagonally increasing} if for every $x, y,z\in [0,1]$, we have
\begin{enumerate}
\item $x\leq y\leq z \Rightarrow w(x,z)\leq w(x,y),$
\item $ y\leq z \leq x \Rightarrow w(x,y)\leq w(x,z)$.
\end{enumerate}
A function $w$ in ${\cal W}_0$ is diagonally increasing almost everywhere if there exists a diagonally increasing function $w'$ which is equal to $w$ almost everywhere.
\end{definition}

Next we formulate our central question: ``which functions $w$ are in fact uniform in disguise?''

\begin{definition}
\label{uniform}
A diagonally increasing function $w\in {\cal W}_0$ has a  {\sl uniform linear embedding} if there exists a measurable injection $\pi:[0,1]\rightarrow \Rrr$ and a decreasing function $f_{pr}:\Rrr^{\geq 0}\rightarrow [0,1]$ such that for every  $x,y\in [0,1]$, $w(x,y)=f_{pr}(|\pi(x)-\pi(y)|)$.
\end{definition}

The function $f_{pr}$ is the {\sl link probability function}, which gives the probability of the occurrence of a link between two points $x,y$ in terms of their distance. The function $\pi$ determines a probability distribution $\mu$  on $\Rrr$, where for all $A\subseteq \Rrr$, $\mu(A)$ equals the Lebesque measure of $\pi^{-1}(A)$.
An equivalent description of $G(n,w)$ is given as follows: a set of $n$ points is chosen from $\Rrr$ according to probability distribution $\mu$. Two points are linked with probability given by $f_{pr}$ of their distance. 
{ Note that in the above definition, we restrict ourselves to injective embeddings $\pi$. We will see in Section 4 that any uniform embedding
must be monotone. Therefore, the requirement that $\pi$ is injective is equivalent to the requirement that the probability distribution $\mu$ has no points of positive measure.}

The notion of diagonally increasing functions, and our interpretation of spatial random graphs, were first given in previous work, see \cite{linearembeddings}. 
In \cite{linearembeddings}, a graph parameter $\Gamma$ is given which aims to measure  the similarity of a graph to an instance of a one-dimensional spatial random graph model. However, the parameter $\Gamma$ fails to distinguish uniform spatial random graph models from the ones which are intrinsically nonuniform. 

In this paper, we give necessary and sufficient conditions for the existence of  uniform linear embeddings for functions in ${\cal W}_0$. We remark that understanding the structure and behaviour of functions in ${\cal W}_0$ is important, due to their deep connection to the study of graph sequences. 
Functions in ${\cal W}_0$ are referred to as {\sl graphons},
and they play a crucial role in the emerging theory of limits of sequences of dense graphs as developed through work of several authors (see \cite{lovaszszegedy2006}, \cite{borgsI2008}, 
\cite{borgsII2007}, \cite{borgs2011}, and also the book \cite{lovaszbook} and the references therein). This theory gives a framework for convergence of sequences of graphs of increasing size that exhibit similar structure.  Structural similarity in this theory is seen in terms of {\sl homomorphism densities}. 
For a given graph $G$, the homomorphism counts are the number of homomorphisms from each finite graph $F$ into $G$, and homomorphism densities are normalized homomorphism counts. The isomorphism type of a (twin-free) graph $G$ is determined by its homomorphism densities. Even partial information about the homomorphism densities of a graph narrows the class of isomorphism types a graph can belong to, and thus gives information about the structure of the graph. 
It was shown in \cite{borgs2011} that the homomorphism densities of the random graph $G(n,w)$ {\sl asymptotically almost surely} approach those of the function $w$. Therefore, $w$ encodes the structure exhibited by  the random graph model $G(n,w)$.

\section{Notation and main result}\label{sec:main}
Recall that $\mathcal{W}_0$ denotes the set of graphons, or all measurable functions  $w: [0, 1]^2\rightarrow [0, 1]$ which are symmetric, {\it i.e.} $w(x,y)=w(y,x)$ for every $x,y\in[0,1]$. 

In this paper, we consider only functions of finite range, for two reasons. Firstly, any function in ${\cal W}_0$ can be approximated by a function with finite range. In fact, for any $\alpha \in [0,1]$, we can create a finite valued $w^{\alpha}$ by rounding all values of $w$ to the nearest multiple of $\alpha$. In order for $w$ to have a uniform embedding, $w^{\alpha}$ has to have a uniform embedding, for each $\alpha$. Secondly, we will see that the necessary conditions become increasingly restrictive when the size of the range increases. This leads us to believe that for infinite valued functions, either the uniform linear embedding will be immediately obvious when considering $w$, or it does not exist. 

Throughout this paper, unless otherwise stated, we assume that $w\in{\cal W}_0$ is a diagonally increasing function with finite range.
It turns out that $w$ has a very specific form. For each $y\in [0,1]$, $w(x,y)$, viewed as a function of $x$, is a step function which is increasing for $x\in [0,y]$, and decreasing for $x\in [y,1]$. This function is determined by the boundary points where the function changes values. This leads to the following definition. 

\begin{definition}
\label{def:boundaries}
Let $w\in{\cal W}_0$ be a diagonally increasing function with  $\range(w) = \{\alpha_1,\ldots,\alpha_N\}$, where $\alpha_1> \alpha_2>\ldots>\alpha_N$.
For $1\leq i\leq  N$, the {\sl upper boundary} $r_i$ and the {\sl lower boundary } $\ell_i$ are functions from $[0,1]$ to $[0,1]$ defined as follows. Fix $x\in[0,1]$. Then
\[\ell_i(x)=\inf\{y\in[0,1]:\, w(x,y)\geq \alpha_i\},\] 
and
\[r_i(x)=\sup\{y\in[0,1]:\, w(x,y)\geq \alpha_i\}.\] 
Also, for $1\leq i< N$, define $r^*_i=r_i|_{[0,\ell_i(1)]}$ and $\ell^*_i=\ell_i|_{[r_i(0),1]}$. Note that $r_i^*$ has domain $[0,\ell_i(1)]$ and range $[r_i(0),1]$, and $\ell_i^*$ has domain $[r_i(0),1]$ and range $[0,\ell_i(1)]$.
 \end{definition}

\begin{figure}[ht]
\centerline{\includegraphics[width=0.45\textwidth]{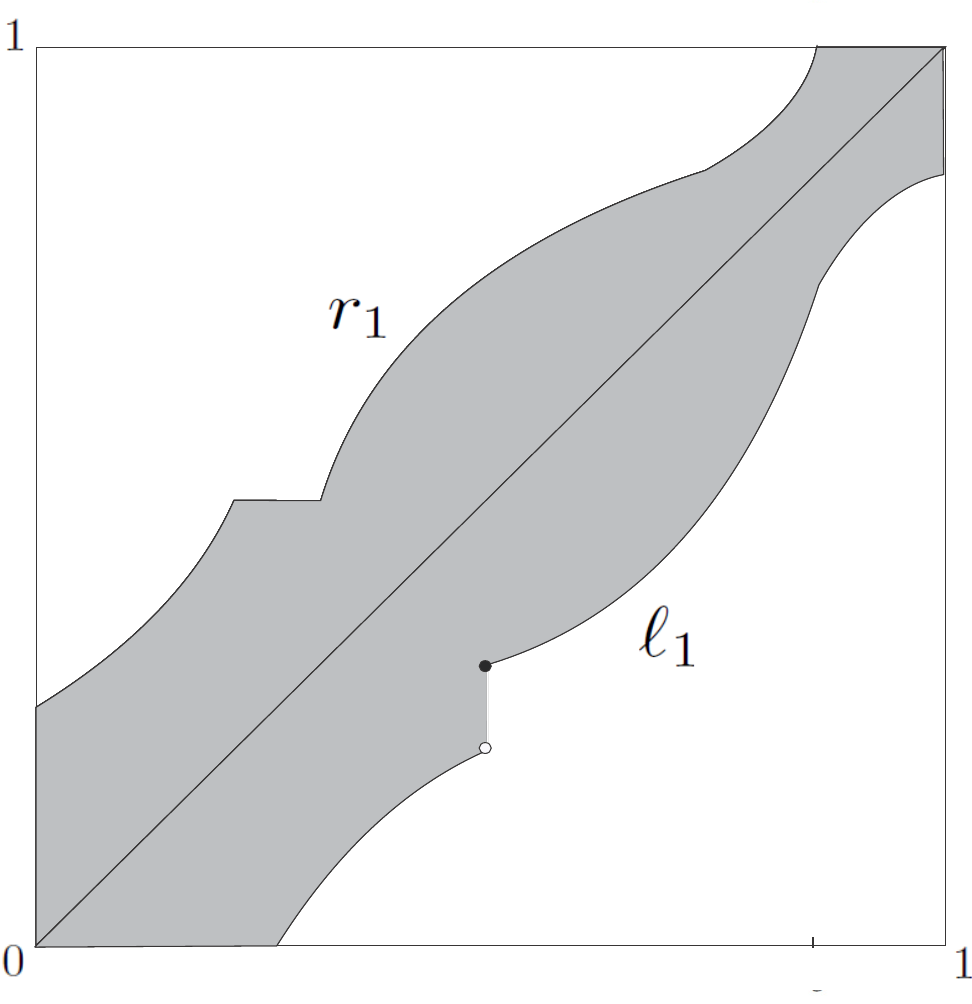}}
\caption{An example of a two-valued, diagonally increasing graphon $w$. The grey area is where $w$ equals $\alpha_1$, elsewhere $w$ equals $\alpha_2$. The functions $\ell_1$ and $r_1$ form the boundaries of the grey area.\label{fig:w}}
\end{figure}

Since $w$ is diagonally increasing, we have $w(x,y)\geq \alpha_i$ if  $y\in(\ell_i(x),r_i(x))$. On the other hand, $w(x,y)<\alpha_i$ whenever $y\in[0,\ell_i(x))\cup(r_i(x),1]$. Thus, the boundaries almost completely define $w$. By modification of $w$ on a set of measure zero, we can assume that $w(x,r_i(x))=w(x,\ell_i(x))=\alpha_i$.  
We will assume throughout without loss of generality that for $1\leq i\leq N$, the functions $\ell_i,r_i:[0,1]\rightarrow [0,1]$  satisfy
\begin{equation}\label{eqn:finite}
w(x,y)\geq \alpha_i \mbox{ if and only if } \ell_i(x)\leq y\leq r_i(x).
\end{equation}

Note also that, for all $x$, $r_{N}(x)=1$ and $\ell_N(x)=0$. Therefore, we usually only consider the boundary functions $r_i,\ell_i$ for $1\leq i<N$. 
Below, we state an adaptation of Definition \ref{uniform} for finite-valued diagonally increasing functions.

\begin{definition}
\label{def:uniformfinite}
Let $w\in {\cal W}_0$ be a diagonally increasing function of finite range, defined as in Equation (\ref{eqn:finite}). Then $w$ has 
 a uniform linear embedding  if there exists a measurable injection $\pi:[0,1]\rightarrow \Rrr$ and 
real numbers $0<d_1<d_2<\dots <d_{N-1}$ so that for all $(x,y)\in [0,1]^2$,
 \begin{equation}\label{eq:finiterange}
w(x,y)=\left\{
\begin{array}{cl}
  \alpha_1 & \text{if }|\pi(x)-\pi(y)|\leq d_1 , \\
  \alpha_i &\text{if } d_{i-1}< |\pi(x)-\pi(y)|\leq d_i \text{ and }1<i<N,\\
  \alpha_N &\text{if } |\pi(x)-\pi(y)|> d_{N-1}.
\end{array}
\right.
\end{equation}
We call $d_1, d_2,\dots, d_{N-1}$ the parameters of the uniform linear embedding $\pi$.
\end{definition}

\begin{remark} 
We do not restrict ourselves to linear uniform embeddings that are continuous. Namely, such a restriction would go against the grain of the question we are trying to answer. Basically, if a uniform linear embedding exists, then the random graph has a natural geometric interpretation: vertices are embedded in a linear space, and the probability of an edge between vertices depends only on the distance between the vertices. Discontinuity in the embedding corresponds to ``empty'' space on the line. If the random graph is a model for real-life networks, then such empty spaces may indicate real obstacles to the appearance of vertices in certain regions. Excluding this option limits the possibilities. As we will see, this limitation is unnecessary.

\end{remark}

We say that a function $w$ is \emph{well-separated} if the boundaries $r^*_i$ and $\ell^*_i$ are continuous, and have positive distances from the diagonal, and from each other.  Formally, well-separated boundaries have the property that there exists $\epsilon >0$ so that, for all $i$ and for all $x\in [0,\ell_i(1)]$, $r_i(x)-x\geq\epsilon$, and $r_j(x)-r_i(x)\geq \epsilon$ for $j>i$. In particular, this implies that $r_1(0)>0$ and $\ell_1(1)<1$. { It also implies that $w$ assumes its maximum value on the entire diagonal. } In the remainder of the article, we will assume that $w$ is well-separated. 

The domain and range of $\ell_i^*$ and $r_i^*$, and thus the domain and range of any composition of such functions, are (possibly empty) closed intervals.
We will refer to  $f_1\circ\ldots\circ f_k$ as a \emph{legal composition}, if each $f_i$ belongs to $\{r^*_j,\ell^*_j:\ j=1,\ldots, N-1\}$ and $\dom(f_1\circ\ldots\circ f_k)\neq \emptyset$.
We define the \emph{signature} of the legal composition $f_1\circ\ldots\circ f_k$ to be the $(N-1)$-tuple $(m_1,\ldots,m_{N-1})$, where $m_i$ is the number of occurrences of $r_i^*$ minus the number of occurrences of $\ell_i^*$ therein. We use Greek letters such as $\phi,\psi,\dots$ to denote legal compositions. We emphasize that a legal composition is a function, which we denote by legal function, presented in a particular manner as a composition of boundary functions.  
Note that two legal compositions may be identical as functions, but have different signatures, due to difference in their presentations. 
Legal compositions provide us with appropriate ``steps'' to define constrained points.

\begin{definition}\label{def:PQ}
Let $w\in{\cal W}_0$ be a diagonally increasing function with finite range. Keep notations as in Definition \ref{def:boundaries}, and define
\begin{eqnarray*}
P&=&\{\phi(0): \ \phi \mbox{ is a legal composition with } 0\in\dom(\phi)\},\\
Q&=&\{\psi(1): \ \psi \mbox{ is a legal composition with } 1\in\dom(\psi)\}.
\end{eqnarray*}
We refer to $P\cup Q$ as the set of {\sl constrained points} of $w$. 
\end{definition}
We will see later in the paper that either $P$ and $Q$ are disjoint, or $P=Q$. In the case $P=Q$, a uniform linear embedding is more constrained, but generally the conditions for the existence of a uniform linear embedding are identical to the (easier) case where $P$ and $Q$ are disjoint. { However, for one special case, our techniques do not suffice for the establishment of necessary conditions. In particular, for this case it may not hold that any uniform embedding is 
monotone. }
This is the exceptional case where $P=Q$, and  the boundary functions are too far apart from the diagonal. { Since $P=Q$ only under special circumstances (see Section 4.1), this exception does not 
detract substantially from our main result.}
We will exclude this case from our main theorem, and from our discussions. 
\begin{case}\label{SpecialCase}
Suppose that $P=Q$. If $r_1(0)>\ell_{N-1}(1)$ and $r_{N-1}(0)> \ell_1(1)$, then the conditions given in Theorem \ref{thm:nec-suff-cond-N} are sufficient, but may not be necessary.
\end{case}
The choice of terminology ``legal composition'' and ``constrained points'' becomes apparent through our results.  The next proposition, which applies to the special case where $\pi$ is continuous, makes this clear. We will see later, in Subsection \ref{subset:displacement-necessary}, that Proposition
\ref{prop:cts-case} has analogies in the general case where $\pi$ is not necessarily continuous. These results are in fact the foundations of the approach taken in this article. 
\begin{proposition}\label{prop:cts-case}
Let $w\in {\cal W}_0$ be a diagonally increasing function with  finite range. Suppose $\pi:[0,1]\rightarrow {\mathbb R}$ is a continuous uniform linear embedding with parameters 
$0<d_1<d_2<\ldots<d_{N-1}$ as in Definition \ref{def:uniformfinite}. Then, for every $1\leq i\leq N-1$, we have
$$|\pi(r_i^*(x))-\pi(x)|=d_i\ \mbox{ and }\ |\pi(\ell_i^*(x))-\pi(x)|=d_i,$$ 
whenever $x$ is in the appropriate domain.
\end{proposition}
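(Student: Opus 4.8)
The plan is to unwind the definition of $r_i^*$ (and symmetrically $\ell_i^*$) and use the continuity of $\pi$, together with the intermediate value theorem, to locate $\pi(r_i^*(x))$ exactly at distance $d_i$ from $\pi(x)$. Fix $i$ and an $x$ in the domain of $r_i^*$. By definition $r_i(x) = \sup\{y : w(x,y) \geq \alpha_i\}$, and since $w$ is well-separated we may assume $w(x, r_i(x)) = \alpha_i$ while, for $y$ slightly larger than $r_i(x)$ (which exists because well-separatedness forces $r_i(x) < 1$ on the domain of $r_i^*$, as $r_i^*$ has domain $[0,\ell_i(1)]$ and $\ell_i(1) < 1$), we have $w(x,y) = \alpha_{i+1} < \alpha_i$ or smaller. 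I would first record the translation, via \eqref{eq:finiterange}, of these two facts into statements about $|\pi(x) - \pi(y)|$: $w(x,y) \geq \alpha_i$ is equivalent to $|\pi(x)-\pi(y)| \leq d_i$ (taking $d_0 = 0$ and $d_N = \infty$ as needed), so $w(x, r_i(x)) = \alpha_i \geq \alpha_i$ gives $|\pi(r_i(x)) - \pi(x)| \leq d_i$.

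Next I would argue the reverse inequality $|\pi(r_i(x)) - \pi(x)| \geq d_i$ by approximation from the right. Pick a sequence $y_n \downarrow r_i(x)$ with $y_n$ in $[0,1]$; then $w(x, y_n) < \alpha_i$, so by \eqref{eq:finiterange} we get $|\pi(y_n) - \pi(x)| > d_i$. Letting $n \to \infty$ and using continuity of $\pi$ at $r_i(x)$ yields $|\pi(r_i(x)) - \pi(x)| \geq d_i$. Combining the two bounds gives $|\pi(r_i^*(x)) - \pi(x)| = d_i$. The argument for $\ell_i^*$ is the mirror image: $\ell_i(x) = \inf\{y : w(x,y) \geq \alpha_i\}$, one approaches $\ell_i(x)$ from the left by a sequence $y_n \uparrow \ell_i(x)$ (valid since $\ell_i(x) \geq r_i(0) > 0$ on the domain of $\ell_i^*$, again by well-separatedness), obtaining $|\pi(\ell_i^*(x)) - \pi(x)| \geq d_i$, while $w(x, \ell_i(x)) = \alpha_i$ gives the matching $\leq d_i$.

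The main point requiring care is making sure the approximating sequences stay inside $[0,1]$ and that the strict-versus-weak inequalities in \eqref{eq:finiterange} are handled correctly at the boundary values $\alpha_1$ and $\alpha_N$ — i.e.\ that for $y$ just past $r_i(x)$ the value of $w(x,y)$ really is some $\alpha_j$ with $j > i$, so that the corresponding distance strictly exceeds $d_i$. This is where well-separatedness does its work: it guarantees the boundaries $r_i^*$ lie a positive distance $\epsilon$ from the diagonal and from each other, so $r_i(x)$ is an interior point of $[0,1]$ whenever $x \in \dom(r_i^*)$, and the normalization $w(x, r_i(x)) = \alpha_i$ from the discussion after \eqref{eqn:finite} is available. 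Once these bookkeeping issues are settled, the proof is just the intermediate-value/continuity argument above applied symmetrically to $r_i^*$ and $\ell_i^*$, and no further machinery is needed.
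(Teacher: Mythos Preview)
Your argument is correct and matches the paper's own proof: obtain $|\pi(r_i^*(x))-\pi(x)|\leq d_i$ from $w(x,r_i^*(x))=\alpha_i$, then pass to the limit in $|\pi(z)-\pi(x)|>d_i$ for $z>r_i^*(x)$ using continuity of $\pi$ (no intermediate value theorem is actually needed, just the limit). One small correction: your claim that well-separatedness forces $r_i^*(x)<1$ throughout $\dom(r_i^*)=[0,\ell_i(1)]$ fails at the right endpoint, since $r_i^*(\ell_i(1))=1$; this is easily patched by continuity in $x$, and the paper glosses over the same point.
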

\begin{proof}
Let $1\leq i\leq N-1$, and assume that $x\in \dom(r_i^*)$. Clearly $w(x,r_i^*(x))=\alpha_i$, and $w(x,z)<\alpha_i$ whenever $z>r_i^*(x)$. Since $\pi$ is a uniform linear embedding, we 
have $|\pi(x)-\pi(r_i^*(x))|\leq d_i$ and $|\pi(x)-\pi(z)|>d_i$ for every $z>r_i^*(x)$. These inequalities, together with the continuity of $\pi$, imply that $|\pi(x)-\pi(r_i^*(x))|=d_i$. The second statement follows analogously. 
\end{proof}
Let $\phi$ be a legal composition, and $p=\phi(0)$ be a point in $\P$. We will see later, in Proposition \ref{prop:pi-properties}, that repeated application of Proposition \ref{prop:cts-case} implies that the image $\pi(p)$ of $p$ is determined by the signature of $\phi$. This inspires the following definition.
\begin{definition} 
\label{def:displacement} 
Assume a positive integer $N$ and real numbers $d_{N-1}>\ldots >d_1>0$ are given. The {\em displacement} of a legal composition $\phi$, denoted by $\delta (\phi)$ is defined as 
\[
\delta (\phi)=d_1m_1+\ldots +d_{N-1}m_{N-1},
\]
where $(m_1,m_2,\ldots ,m_{N-1})$ is the signature of $\phi$.
\end{definition}

Now, we can state our main theorem.
\begin{theorem}[Necessary and sufficient conditions]\label{thm:nec-suff-cond-N}
Let $w$ be a well-separated finite-valued diagonally increasing function assuming values $\alpha_1>\ldots>\alpha_N$.  Let $\P$ and $\Q$  be as in Definition \ref{def:PQ}, and exclude \ref{SpecialCase}.  
The function $w$ has a uniform linear embedding if and only if  the following conditions hold:
\begin{enumerate}
\item If $\phi$ is a legal function with $\phi(x)=x$ for some $x\in \dom(\phi)$, then $\phi$ is the identity function on its domain. 

\item  There exist real numbers $0<d_1<\ldots<d_{N-1}$ such that 
\begin{itemize}
\item[(2a)] The displacement $\delta$ as defined in Definition \ref{def:displacement} is increasing on $\P$, in the sense that, for all $x,y\in\P$, and legal compositions $\phi,\psi$ so that $x=\phi(0)$ and $y=\psi(0)$, we have that, if  $x<y$ then $\delta(\phi)<\delta(\psi)$.
\item[(2b)] If $\P\cap\Q=\emptyset$ then there exists $a\in \mathbb{R}^{\geq 0}$ which satisfies the following condition:
If $\phi$ and $\psi$ are legal compositions with $1\in\dom(\phi)$ and $0\in \dom(\psi)$, and  if $\phi(1)<r_i^*(0)$  then 
$\delta(\psi)<a<d_i-\delta(\phi)$. 
\end{itemize}
\end{enumerate}
\end{theorem}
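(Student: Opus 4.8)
The plan is to prove the two directions separately, with the continuous-embedding machinery of Proposition \ref{prop:cts-case} (and its promised generalization, Proposition \ref{prop:pi-properties}) as the workhorse in both.

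\textbf{Necessity.} Suppose $w$ has a uniform linear embedding $\pi$ with parameters $0<d_1<\ldots<d_{N-1}$. The first step is to establish monotonicity of $\pi$: using that $w$ is diagonally increasing and well-separated, I would argue that the ordering of $[0,1]$ must be preserved (or reversed, in which case relabel) by $\pi$, so that $\pi$ is monotone. This is exactly the place where \ref{SpecialCase} must be excluded, since there monotonicity can fail. Once $\pi$ is monotone one upgrades Proposition \ref{prop:cts-case}: even without continuity, $x<r_i^*(x)$ forces $|\pi(r_i^*(x))-\pi(x)|=d_i$ because any strictly larger $y$ has $w(x,y)<\alpha_i$ hence $|\pi(y)-\pi(x)|>d_i$, while $|\pi(r_i^*(x))-\pi(x)|\le d_i$, and monotonicity rules out a jump at $r_i^*(x)$ that would leave a gap; symmetrically for $\ell_i^*$. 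Iterating along a legal composition $\phi$ with $0\in\dom(\phi)$ and using the sign bookkeeping in the signature gives $\pi(\phi(0))-\pi(0)=\delta(\phi)$, and likewise $\pi(\psi(1))-\pi(1)=-\delta(\psi)$ for $\psi$ with $1\in\dom(\psi)$ (this is the content I would cite from Proposition \ref{prop:pi-properties}). Condition (1) then follows: if $\phi(x)=x$ then applying the displacement identity based at the constrained endpoint of $\dom(\phi)$ forces $\delta$ of the relevant composition to be zero along a loop, and since the $d_i$ are positive and independent over such integer combinations in the relevant range, $\phi$ must be the identity. Condition (2a) is immediate from $\pi(\phi(0))=\pi(0)+\delta(\phi)$ together with injectivity/monotonicity of $\pi$: $x<y$ in $\P$ gives $\pi(x)<\pi(y)$, hence $\delta(\phi)<\delta(\psi)$. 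For (2b), when $\P\cap\Q=\emptyset$ the images $\pi(\P)$ and $\pi(\Q)$ are separated; setting $a$ to be (a shift of) the gap between $\sup\pi(\P)$-type quantities and $\inf\pi(\Q)$-type quantities, the inequality $\phi(1)<r_i^*(0)$ translates, via the displacement identities and $|\pi(r_i^*(0))-\pi(0)|=d_i$, into $\delta(\psi)<a<d_i-\delta(\phi)$.

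\textbf{Sufficiency.} Conversely, given $d_1<\ldots<d_{N-1}$ satisfying (2a) and (2b) (and (1)), I would construct $\pi$ explicitly. First define $\pi$ on the constrained set $P\cup Q$: put $\pi(\phi(0))=\delta(\phi)$ for $p=\phi(0)\in\P$ and $\pi(\psi(1))=c-\delta(\psi)$ for $q=\psi(1)\in\Q$, where the constant $c$ is chosen so the two pieces fit together — when $\P\cap\Q=\emptyset$ one uses the number $a$ from (2b) to pin down $c$ and guarantee the prescribed order; when $\P=\Q$ one checks the two formulas agree, which is where (1) re-enters. Condition (2a) guarantees this partial $\pi$ is well-defined and order-preserving on $P\cup Q$. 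Then extend $\pi$ to all of $[0,1]$: on each maximal open interval of $[0,1]\setminus(P\cup Q)$ interpolate monotonically (e.g. affinely) between the already-assigned endpoint values, possibly leaving controlled gaps in $\Rrr$. Finally one verifies $w(x,y)=\alpha_i$ exactly on the prescribed distance bands: the key claim is that for $x$ in the appropriate domain, $\pi(r_i^*(x))-\pi(x)=d_i$ for all $x$, not just constrained ones — this should follow because $r_i^*$ maps the interpolation structure consistently (the endpoints of the interpolation intervals are constrained points, on which the identity holds, and the monotone interpolation is chosen to respect the $r_i^*,\ell_i^*$ correspondence). From these band identities, diagonal-increasingness of $w$, and well-separatedness, one reads off that $|\pi(x)-\pi(y)|\le d_i \iff w(x,y)\ge\alpha_i$, which is \eqref{eq:finiterange}.

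\textbf{Main obstacle.} The hard part is the sufficiency direction's extension step: verifying that the monotone interpolation on $[0,1]\setminus(P\cup Q)$ can be chosen so that \emph{all} the boundary identities $|\pi(r_i^*(x))-\pi(x)|=d_i$ and $|\pi(\ell_i^*(x))-\pi(x)|=d_i$ hold simultaneously for non-constrained $x$, across all $i$ at once. This requires understanding how the maps $r_i^*,\ell_i^*$ permute and stretch the complementary intervals, and showing the constraints imposed by different $i$'s are mutually compatible — ultimately this is what conditions (1), (2a), (2b) are engineered to ensure, but translating the combinatorial conditions into the analytic construction, and handling the $\P=\Q$ case (excluding \ref{SpecialCase}) uniformly, is the delicate bookkeeping where I expect the real work to lie.
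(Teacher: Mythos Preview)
Your overall architecture matches the paper's closely: necessity via monotonicity of $\pi$ plus displacement identities, sufficiency via defining $\pi$ on $\P\cup\Q$ by displacements and then extending. Two points need correction, though.

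\textbf{Necessity, Condition (1).} Your argument here is wrong. You write that $\phi(x)=x$ forces the displacement to vanish ``since the $d_i$ are positive and independent over such integer combinations in the relevant range.'' No such independence is assumed or available; the $d_i$ can have arbitrary rational ratios. The paper's argument (Proposition \ref{prop:pi-delta}) is different: one first compares $\phi$ to the identity legal function $\ell_1^*\circ r_1^*$ (which has displacement $0$ and shares the point $x$), concludes $\delta(\phi)=0$ via part (iii), and then uses part (iv), which says $\delta(\phi)=0$ forces $\phi$ to be the identity because $\pi^+$ is strictly increasing. Relatedly, your ``upgrade'' of Proposition \ref{prop:cts-case} is not quite right: the identity $\pi(r_i^*(x))-\pi(x)=d_i$ does \emph{not} hold at discontinuities of $\pi$, and $\pi$ can have countably many jumps. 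The paper works instead with one-sided limits $\pi^+,\pi^-$ and proves $\pi^+(r_i^*(x))=\pi^+(x)+d_i$; the orbit arguments then have to track carefully whether $0$ or $1$ is ever touched, which is where Proposition \ref{prop:orbits-domain} and the singleton-domain analysis (Proposition \ref{singleton-domain-delta}) enter when $\P=\Q$.

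\textbf{Sufficiency, the extension.} You correctly flag this as the main obstacle, but your sketch (``interpolate affinely on each complementary interval'') does not work as stated, and the fix is a genuine idea you are missing. Independent affine interpolation on each component of $[0,1]\setminus\overline{\P\cup\Q}$ will \emph{not} satisfy $\pi(r_i^*(x))-\pi(x)=d_i$ for interior $x$, because $r_i^*$ carries one complementary interval onto another and the two affine maps need not be compatible. The paper's solution (Lemma \ref{lem:equivalent}) is to put an equivalence relation on these intervals: $I\sim I'$ if some legal function carries one onto the other. Condition (1) is exactly what guarantees that a legal function mapping $I$ into itself is the identity, so the relation is well-behaved; one then chooses the interpolation freely on one representative per class and \emph{propagates} it to the rest of the class via $\pi(x)=\pi(\phi(x))-\delta(\phi)$. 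This is the mechanism that makes all boundary identities hold simultaneously, and it is not automatic from (2a)--(2b) alone without introducing this equivalence structure. The paper also needs Lemmas \ref{lem:delta-increasing}--\ref{lem:new-1-1-correspondence} to show $\pi$ so defined is strictly increasing on $\P\cup\Q$ and extends monotonically to the closure before the interval step.
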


Let us briefly discuss the necessary and sufficient conditions given in the above theorem 
from the algorithmic perspective. 
The sets $P$ and $Q$ can be generated in stages, starting with  $\P_0=\{ 0\}$ and $\Q_0=\{ 1\}$, and iteratively constructing $\P_n$ and $Q_n$ by applying the functions $r^*_i$ and $\ell_i^*$ to the points in $\P_{n-1}$ and $\Q_{n-1}$. Conditions (2a) and (2b) translate into simple inequalities on the sets of constrained points, and can be checked during the generation process. (It can be shown that the time needed is bounded by a polynomial in the number of constrained points.) Thus,
when a violation is found, this is proof that no uniform embedding exists, and the process can be stopped. 
 
Our proof of sufficiency is constructive. When the set of constrained points is finite { and the conditions are satisfied}, the construction { of a uniform embedding} presented in the proof of Theorem \ref{thm:nec-suff-cond-N} can be implemented with an algorithm with complexity polynomial in the size of the set of constrained points. (See Section \ref{sec:sufficient}). 
Condition (1) 
involves checking the equivalence of a function on the real interval $[0,1]$ to the identity function, which cannot generally be achieved in finite time.  If this condition is violated but no violation of condition (2) is found, then the uniform embedding as constructed in Section \ref{sec:sufficient} may not be well-defined. 

The construction of $\pi$ given in Section \ref{sec:sufficient} { and the iterative generation of the sets of constrained points} suggest a possible approximation algorithm. 
One could stop the generation process after a finite number of steps, and construct an approximate embedding $\pi$ based on the set  $\P_n$ containing only a pre-described number $n$ of generations. It is natural to believe that this approximate embedding is ``close'' to $w$, and will converge to $w$ when more generations are taken into account. We will  investigate this approach in further work.


\section{Properties of legal compositions}\label{sec:conditions}
In this section, we give some general properties of boundary functions and the legal compositions derived from the boundaries. Throughout the rest of this article, we 
assume $w$ is as given in the following assumption.

\begin{assumption}\label{ass:w}
Let $w\in{\cal W}_0$ be a diagonally increasing function assuming values $\alpha_1>\dots >\alpha_N$. Let the boundaries of $w$ be as in Definition \ref{def:boundaries}. 
Assume that $w(x,y)\geq \alpha_i$ if and only if $\ell_i(x)\leq y\leq r_i(x)$.
Moreover, assume that $w$ is well-separated.
\end{assumption}

We start by giving an example, which illustrates why legal compositions play such an important role. 

\subsubsection*{Example 1}\label{example-negative}
Let $w$ be a diagonally increasing $\{\alpha_1,\alpha_2,\alpha_3\}$-valued function where $\alpha_1>\alpha_2>\alpha_3=0$ and $w$ has the following upper boundaries. 
$$
r_1(x)=\left\{
\begin{array}{cc}
x+\frac{1}{10} & x\in[0,\frac{9}{10}]  \\
1  &  x\in [\frac{9}{10},1]\\
\end{array}
\right.,$$
and 
$$
r_2(x)=\left\{
\begin{array}{cc}
2x+\frac{1}{8} & x\in[0,\frac{7}{16}]  \\
1  &  x\in [\frac{7}{16},1]\\
\end{array}
\right..$$
Define sequences $\{x_i\}_{i\geq 0}$ and $\{y_j\}_{j\geq 0}$ as follows. Let $x_0=0$ and $x_i=r_1^i(0)=\frac{i}{10}$ for $1\leq i\leq 9$. Also let $y_0=0$ and $y_j=r_2^j(0)=\frac{2^j-1}{8}$ for $1\leq j \leq 3$. Suppose that $w$ admits a uniform embedding $\pi$ with parameters $d_2>d_1>0$. We will see later ({\sl e.g.} Corollary \ref{cor:increasing}) that $\pi$ can be assumed to be strictly increasing and $\pi(0)=0$.
The image under $\pi$ of all points $x_i$ and $y_i$ are almost completely determined. Precisely, we must have that
\begin{eqnarray}\label{eq:exp1}
(i-1)d_1<\pi(x_i)\leq id_1\quad \mbox{and}\quad (j-1)d_2<\pi(y_j)\leq jd_2
\end{eqnarray}
Equation \ref{eq:exp1}, together with the fact that $x_8<y_3$, implies that $\pi(x_2)\leq 2d_1$ and $7d_1<\pi(x_8)<\pi(y_3)\leq 3d_2$. Also, since $x_2>y_1=r_2(0)$, we have that $\pi(x_2)>d_2$.
This gives contradicting restrictions on $d_1$ and $d_2$, and thus the function $w$ has no uniform linear embedding. 

The reason for non-existence of a uniform linear embedding here, in contrast to the fact that the boundary functions behave nicely, is the uneven distribution of points obtained from repeated application of $r_1$ and $r_2$ to $0$. More precisely in the interval $[y_1,y_2]$ there are two points of the sequence $\{x_i\}_{i\geq 0}$ while in the interval $[y_2,y_3]$ there are five of them. This does not allow the function $w$ to have a uniform linear embedding. 
\vspace*{0.2cm}

Note that the points $\{x_i\}_{i=1}^9$ and $\{y_i\}_{i=1}^3$ of the above example are in the set $\P$ as defined in Definition \ref{def:PQ}. This suggests that the points in $\P$ play an important role in determining whether a uniform embedding exists. Conditions (2a) and (2b) of Theorem \ref{thm:nec-suff-cond-N} indicate, roughly, that the points in $\P$ and $\Q$ do not give rise to a contradiction as described here.

Let us therefore take a closer look at boundaries and legal compositions, which define the points in $\P$ and $\Q$. Let $w$ be as in Assumption \ref{ass:w}. We first note that the boundaries  are increasing. Namely, assume to the contrary that there exist $x,y$ so that $x<y$ and $\ell_i (x) >\ell_i (y)$. Then there exists $z$ so that 
$\ell_i (y) < z <\ell_i(x) \leq x <y$. So $z\in [\ell_i(y),r_i(y)]$ and $z\not\in [\ell_i(x),r_i(x)]$, and thus $w(x,z)<w(y,z)$. This contradicts the fact that $w$ is diagonally increasing. 

The lower and upper boundaries of $w$ are closely related, as $w$ is symmetric. 
Firstly,  a discontinuity in $\ell_i$ corresponds to an interval where $r_i$ is constant. Thus if $r_i^*=r_i|_{[0,\ell_i(1)]}$ and $\ell_i^*=\ell_i|_{[r_i(0),1]}$ are both strictly increasing, they are both continuous. Recall that these functions completely determine $w$.
%
Next note that  for all $y<x$ we have that $y\geq\ell_i (x)$ if and only if $x\leq r_i(y)$, since $w$ is symmetric. Thus, the upper boundaries are enough to completely determine $w$.   In fact, if $r_i$ is  continuous (thus strictly increasing) on $[0,\ell_i(1)]$,  then $r_i$ can be realized as $\ell_i^{-1}$ on $[0,\ell_i(1)]$, and 1 everywhere else. 
So if $w$ is well-separated, then $r^*_i$ and $\ell_i^*$ are bijective (hence strictly increasing and invertible) functions. Moreover, $r^*_i\circ \ell^*_i$ and $\ell^*_i\circ r^*_i$ are identity functions on their domain.

\begin{observation}\label{obs1}
From the above it is
clear that every legal composition $f_1\circ\ldots\circ f_k$ is a strictly increasing function. We also remark that each term $f_i$ of a legal composition is invertible in the sense that  if $(f_1\circ\ldots\circ f_k)(z)=x$  then $(f_2\circ\ldots\circ f_k)(z)= f_1^{-1}(x)$. Moreover, $f_i^{-1}\in\{\ell_i^*,r_i^*:\, 1\leq i< N\}$.   
\end{observation}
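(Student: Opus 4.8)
\textbf{Proof proposal for Observation \ref{obs1}.}

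The plan is to harvest everything from the two structural facts established just above the statement: that each $r_i^*$ and $\ell_i^*$ is bijective and strictly increasing, and that $r_i^*$ and $\ell_i^*$ are mutually inverse (i.e.\ $r_i^*\circ\ell_i^*$ and $\ell_i^*\circ r_i^*$ are the relevant identity maps). First I would note that a composition of strictly increasing functions is strictly increasing, so any $f_1\circ\cdots\circ f_k$, being by definition a legal composition with nonempty domain and each $f_i\in\{r_j^*,\ell_j^*\}$, is strictly increasing; since the domain and range of each $f_i$ are closed intervals (as recalled before Definition \ref{def:PQ}), the composition is a genuine strictly increasing function between closed intervals.

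For the ``invertibility'' claim, suppose $(f_1\circ\cdots\circ f_k)(z)=x$. Write $z'=(f_2\circ\cdots\circ f_k)(z)$, so $f_1(z')=x$, hence $z'$ lies in $\dom(f_1)$ and $x$ lies in $\range(f_1)$. Since $f_1$ is a strictly increasing bijection from $\dom(f_1)$ onto $\range(f_1)$, it has a well-defined inverse on $\range(f_1)$, and $z'=f_1^{-1}(x)$, which is exactly the assertion $(f_2\circ\cdots\circ f_k)(z)=f_1^{-1}(x)$. Finally, to identify $f_1^{-1}$: if $f_1=r_i^*$ then $\ell_i^*\circ r_i^*$ is the identity on $\dom(r_i^*)$, so $f_1^{-1}=\ell_i^*$ (restricted to $\range(r_i^*)$); symmetrically, if $f_1=\ell_i^*$ then $f_1^{-1}=r_i^*$. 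In either case $f_1^{-1}\in\{\ell_i^*,r_i^*:\,1\leq i<N\}$, as claimed. The same reasoning applies to any $f_j$ in the composition, not just $f_1$, by peeling off the outer factors $f_1,\dots,f_{j-1}$ first.

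There is essentially no obstacle here: the statement is a bookkeeping consequence of the bijectivity and mutual-inverse properties of $r_i^*,\ell_i^*$ already derived from well-separatedness and symmetry of $w$. The only point requiring a little care is making sure the domains and ranges line up so that each inverse is taken on the correct closed interval --- i.e.\ that when we write $f_1^{-1}(x)$, the point $x$ indeed lies in $\range(f_1)$, which is guaranteed precisely because $(f_1\circ\cdots\circ f_k)(z)=x$ forces $x$ into that range. I would phrase the observation's proof in one short paragraph, since all the substantive work has already been done in the preceding discussion.
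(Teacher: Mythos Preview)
Your proposal is correct and matches the paper's approach: the paper presents this observation without a separate proof, treating it as an immediate consequence of the preceding discussion that $r_i^*$ and $\ell_i^*$ are strictly increasing bijections which are mutually inverse. Your write-up is simply the natural one-paragraph elaboration of the phrase ``from the above it is clear,'' and nothing more is needed.
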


Recall that a legal composition is called a legal function, when considered only as a function. We will now show that the points in $\P$ and $\Q$ are closely related, and that the domain and range of legal functions  are intervals which begin at a point in $\P$, and end at a point in $\Q$. 

\begin{lemma}\label{lemma:domains-of-legal}
Let $w$ be as in Assumption \ref{ass:w} and, let $\phi=f_1\circ\ldots\circ f_k$ be a legal function. 
Then 
\begin{itemize}
\item[(i)] Suppose $z\in \dom(\phi)$. Then $z\in \P$ if and only if $\phi(z)\in \P$. Similarly, $z\in \Q$ if and only if $\phi(z)\in \Q$.
\item[(ii)] There is a one-to-one correspondence between $\dom(\phi)\cap \P$and $\range(\phi)\cap \P$, and between   $\dom(\phi)\cap \Q$ and $\range(\phi)\cap \Q$.
\end{itemize}
\end{lemma}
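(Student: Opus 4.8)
The plan is to prove both parts simultaneously by induction on the length $k$ of the composition $\phi = f_1\circ\ldots\circ f_k$, reducing everything to the base case of a single boundary function. First I would observe that (ii) follows almost immediately from (i): since $\phi$ is a strictly increasing bijection from $\dom(\phi)$ onto $\range(\phi)$ (Observation \ref{obs1}), the restriction $z\mapsto \phi(z)$ is already a bijection between the two intervals, and (i) says it carries $\dom(\phi)\cap\P$ exactly onto $\range(\phi)\cap\P$ (and similarly for $\Q$). So the whole content is in (i).

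For (i), the key reduction is that it suffices to treat the case $k=1$, i.e. $\phi = f_1$ with $f_1\in\{r_i^*,\ell_i^*:\ 1\le i<N\}$. Indeed, if $z\in\dom(f_1\circ\ldots\circ f_k)$, set $z' = (f_2\circ\ldots\circ f_k)(z)$, which lies in $\dom(f_1)$, and note $\phi(z) = f_1(z')$. By the inductive hypothesis applied to the legal function $f_2\circ\ldots\circ f_k$, we have $z\in\P \iff z'\in\P$, and by the $k=1$ case applied to $f_1$ we have $z'\in\P \iff f_1(z') = \phi(z)\in\P$; chaining these gives the claim, and the $\Q$ statement is identical. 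So I would reduce to showing: for $f\in\{r_i^*,\ell_i^*\}$ and $z\in\dom(f)$, $z\in\P\iff f(z)\in\P$ (and likewise for $\Q$).

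The $k=1$ case is where the definitions of $\P$ and $\Q$ do the work. Suppose $z\in\P$, so $z=\psi(0)$ for some legal composition $\psi$ with $0\in\dom(\psi)$. If $z\in\dom(f)$, then $f\circ\psi$ is a legal composition (its domain is nonempty since it contains $0$), hence $f(z)=(f\circ\psi)(0)\in\P$. Conversely, suppose $f(z)\in\P$, say $f(z)=\chi(0)$ with $\chi$ a legal composition and $0\in\dom(\chi)$. Here I use that $f$ is invertible with $f^{-1}\in\{r_j^*,\ell_j^*\}$ (the well-separated hypothesis, via Observation \ref{obs1}): then $z = f^{-1}(f(z)) = (f^{-1}\circ\chi)(0)$, and $f^{-1}\circ\chi$ is a legal composition with $0$ in its domain — provided $f(z)\in\dom(f^{-1})=\range(f)$, which holds since $z\in\dom(f)$ means $f(z)\in\range(f)$. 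Hence $z\in\P$. The argument for $\Q$ is verbatim the same with $0$ replaced by $1$ and $\psi$'s starting at $1$.

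The main obstacle, and the point requiring care, is the bookkeeping around domains: one must make sure at each step that the composition formed is genuinely \emph{legal}, i.e. has nonempty domain, and that the point ($0$ or $1$) one is evaluating at actually lies in that domain. Both are handled by the remarks preceding the lemma — legal compositions are strictly increasing bijections between closed intervals, $r_i^*\circ\ell_i^*$ and $\ell_i^*\circ r_i^*$ are identities on their domains, and $f^{-1}\in\{r_j^*,\ell_j^*\}$ — but these facts need to be invoked explicitly so that, e.g., $f^{-1}\circ\chi$ is recognized as a legitimate legal composition rather than merely a composite function. A minor additional point is that "$f_1\circ\cdots\circ f_k$ is a legal function" as hypothesized already guarantees all the intermediate domains are nonempty, so the reduction step does not require re-checking legality of the tail $f_2\circ\cdots\circ f_k$.
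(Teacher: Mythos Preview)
Your proof is correct and uses the same core idea as the paper: compose with a legal $\psi$ witnessing $z\in\P$ to get $\phi(z)\in\P$, and use that inverses of boundary functions are again boundary functions (Observation~\ref{obs1}) for the converse. The only difference is organizational: the paper skips the induction and applies the inverse of the whole composition $\phi$ at once, writing $z=(\phi^{-1}\circ\eta)(0)$ directly, whereas you peel off one $f_i$ at a time --- both are fine, the paper's version is just slightly shorter.
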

\begin{proof}
Let $\phi$ be a legal function, and $z\in \dom(\phi)$.  By definition, if $z\in \P$ then there exists a legal function $\psi$ such that $z=\psi(0)$. Thus, $\phi(z)=(\phi\circ \psi)(0)$ belongs to $\P$ as well. On the other hand, assume that $\phi(z)\in \P$, {\it i.e.} $\phi(z)=\eta(0)$ for a legal function $\eta$. By Observation \ref{obs1}, we have $(\phi^{-1}\circ \eta)(0)=z $, thus $z\in \P$.
A similar argument proves the statement for $z\in\Q$. Part (ii) trivially follows from (i).
\end{proof}
Note that for every legal composition $\phi=f_1\circ\ldots\circ f_k$ applied to a point $x$ there is a sequence of points which lead from $x$ to $\phi(x)$, by first applying $f_k$, then $f_{k-1}$ etc. This inspires the definition of ``orbit'' of an element in the domain of a legal composition. 
\begin{definition}
Under the Assumption \ref{ass:w}, let $\phi=f_1\circ\ldots\circ f_k$ be a legal composition, and  $x\in \dom(\phi)$. The \emph{orbit} of 
$x$ under $\phi$, denoted by $\O_x$, is defined to be the set $\{x\}\cup\{f_t\circ\ldots\circ f_k(x): 1\leq t\leq k\}$. We sometimes use the notation 
$$\O_x=\{x_0,x_1,\ldots,x_k\},$$
where $x_0=x$ and $x_i=f_{k-i+1}\circ \dots \circ f_k(x)$ for $1\leq i\leq k$. 

We say that $\phi(x)$ \emph{touches} 0 (or 1) if the orbit of $x$ under $\phi$ includes 0 (or 1). 
\end{definition}

\begin{proposition}\label{prop:orbits-domain}
Under Assumption \ref{ass:w}, if  $\phi=f_1\circ\ldots\circ f_k$ is a legal composition with $\dom(\phi)=[p,q]$ then we have:
\begin{itemize}
\item[(i)] $p\in\P$ and $q\in\Q$. 
\item[(ii)] $\phi(p)$ touches 0, and $\phi(q)$ touches 1. 
%
\end{itemize}
\end{proposition}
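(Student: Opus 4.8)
The plan is to prove both parts simultaneously by induction on the length $k$ of the legal composition $\phi = f_1 \circ \ldots \circ f_k$, exploiting the recursive structure of legal compositions together with Observation \ref{obs1}. The base case $k=0$ (the empty composition, i.e.\ the identity) is immediate since then $\dom(\phi) = [0,1]$, and indeed $0 \in \P$, $1 \in \Q$, with $\phi(0) = 0$ touching $0$ and $\phi(1) = 1$ touching $1$.

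For the inductive step, write $\phi = f_1 \circ \psi$ where $\psi = f_2 \circ \ldots \circ f_k$ is a legal composition (of length $k-1$) and $f_1 \in \{r_j^*, \ell_j^* : 1 \le j < N\}$. By induction, $\dom(\psi) = [p', q']$ with $p' \in \P$, $q' \in \Q$, and $\psi(p')$ touches $0$, $\psi(q')$ touches $1$. Now $\dom(\phi) = \psi^{-1}(\dom(f_1) \cap \range(\psi))$. Since $\psi$ is strictly increasing (Observation \ref{obs1}) and $\dom(f_1)$, $\range(\psi)$ are closed intervals with nonempty intersection (legality of $\phi$), their intersection is a closed subinterval $[a,b]$ of $\range(\psi)$, and $\dom(\phi) = [\psi^{-1}(a), \psi^{-1}(b)]$, which is the $[p,q]$ in the statement. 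I must then show $p = \psi^{-1}(a) \in \P$ and $q = \psi^{-1}(b) \in \Q$. The key point: the endpoint $a$ of $\dom(f_1) \cap \range(\psi)$ is \emph{either} the left endpoint of $\dom(f_1)$ (which is $0$ for $r_j^*$ and $r_j(0)$ for $\ell_j^*$ — in both cases a constrained point, since $r_j(0) = r_j^*(0) \in \P$) \emph{or} the left endpoint of $\range(\psi)$, namely $\psi(p')$, which equals a point touching $0$, hence lies in $\P$ by Lemma \ref{lemma:domains-of-legal}(i) applied along the orbit (each intermediate point is obtained from the previous by a single boundary function, and $0 \in \P$). Either way $a \in \P$, and then $p = \psi^{-1}(a) \in \P$ by Lemma \ref{lemma:domains-of-legal}(i) (since $\psi^{-1}$ is again a legal function by Observation \ref{obs1}). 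The argument for $q$ and $\Q$ is symmetric, using the right endpoints and the point $1$.

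For part (ii): $\phi(p) = f_1(\psi(p)) = f_1(\psi(\psi^{-1}(a))) = f_1(a)$. If $a$ is the left endpoint of $\dom(f_1)$, then $f_1(a)$ is the left endpoint of $\range(f_1)$, which is $0$ when $f_1 = \ell_j^*$ and $r_j(0)$ when $f_1 = r_j^*$; tracing the orbit of $p$ under $\phi$, the first step $f_k$ applied to $p$ lands at $\psi(p) = a$ — wait, more carefully: the orbit is $\{p = x_0, x_1, \ldots, x_k = \phi(p)\}$ with $x_i = f_{k-i+1}\circ\cdots\circ f_k(p)$, so $x_{k-1} = \psi(p) = a$ and $x_k = f_1(a)$. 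If $a$ is the left endpoint of $\range(\psi)$, i.e.\ $a = \psi(p')$, then by induction the orbit of $p'$ under $\psi$ already includes $0$, and this orbit is an initial segment of the orbit of $p$ under $\phi$ (the first $k-1$ steps coincide), so $\phi(p)$ touches $0$. If instead $a$ is the left endpoint of $\dom(f_1)$: when $f_1 = \ell_j^*$, $f_1(a) = 0$ directly, so $\phi(p)$ touches $0$; when $f_1 = r_j^*$, we have $a = 0$, and $a = \psi(p)$ forces — hmm, this needs $p' = 0$ is not automatic. The cleaner route is to observe $a \in \P$ always arises because $a$ is reachable from $0$: formalize via a short lemma that every point of $\P$ has an orbit (under some legal composition) containing $0$, then concatenate. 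I would fold this observation into the induction.

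The main obstacle I anticipate is the bookkeeping around which of the two endpoints — the endpoint of $\dom(f_1)$ versus the endpoint of $\range(\psi)$ — actually realizes the endpoint of the intersection, and correspondingly whether the orbit-reaching-$0$ certificate comes from the ``new'' outermost function $f_1$ or is inherited from $\psi$. Handling both subcases uniformly requires either a careful case split or, preferably, strengthening the inductive hypothesis to record explicitly a legal composition $\eta$ with $\eta(0) = p$ whose orbit structure is compatible with that of $\phi$; this makes the concatenation in part (ii) transparent. Everything else — strict monotonicity, intervals mapping to intervals, invertibility of the $f_i$ — is supplied by Observation \ref{obs1} and Lemma \ref{lemma:domains-of-legal}.
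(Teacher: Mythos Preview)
Your approach is essentially the paper's: induction on the length $k$, decomposing $\phi = f_1 \circ \psi$ with $\psi = f_2 \circ \cdots \circ f_k$, and analyzing how the endpoints of $\dom(\phi)$ arise from the intersection $\dom(f_1) \cap \range(\psi)$. Part (i) goes through exactly as you describe, and matches the paper.

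For part (ii), the subcase that tangles you --- $f_1 = r_j^*$ with $a$ equal to the left endpoint of $\dom(f_1)$ --- is in fact a non-case, and the paper's case split makes this transparent. The paper splits instead on whether $p = p'$ or $p > p'$ (equivalently, whether $\psi(p') \in \dom(f_1)$). If $p = p'$, the orbit of $p$ under $\phi$ extends the orbit of $p'$ under $\psi$, which contains $0$ by induction. If $p > p'$, then $\psi(p') \notin \dom(f_1)$ while $\psi(p) \in \dom(f_1)$ with $\psi(p') < \psi(p)$; since $\dom(r_j^*) = [0,\ell_j(1)]$ is an initial segment of $[0,1]$, this forces $f_1 = \ell_j^*$. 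Then $\psi(p)$ must be the left endpoint $r_j^*(0)$ of $\dom(\ell_j^*)$, and $\phi(p) = \ell_j^*(r_j^*(0)) = 0$, so the orbit hits $0$ at the very last step. Your worrisome scenario $f_1 = r_j^*$, $a = 0$ can only occur when $\psi(p') = 0$, i.e.\ when $p = p'$, so it is already absorbed by the first case. No strengthened inductive hypothesis is required; the two-case dichotomy $p = p'$ versus $p > p'$ suffices.
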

\begin{proof}
We use induction on the number of terms of the legal composition. Clearly (i) holds for all functions  $\phi\in \{r_i^*,\ell_i^*:\, 1\leq i< N\}$.  Now let 
$\phi$ and $\psi$ be legal functions with $\dom(\phi)=[p,q]$,  $\dom(\psi)=[p',q']$, $p,p'\in \P$, and $q,q'\in \Q$. (Recall that by definition of a legal composition $\dom(\phi)$ and $\dom(\psi)$ 
are nonempty.) Since $\phi$ and $\psi$ are strictly increasing, we have 
$\range(\phi)=[\phi(p),\phi(q)]$  and  $\range(\psi)=[\psi(p'),\psi(q')]$. By  Lemma \ref{lemma:domains-of-legal}, we have $\phi(p), \psi(p')\in \P$, and $\phi(q),\psi(q')\in \Q$. 
We know that $\dom(\phi\circ \psi)=\dom(\psi)\cap \psi^{-1}(\dom(\phi)\cap \range(\psi))$. Since $\range(\psi)$, $\dom(\phi)$, and
$\dom(\psi)$ are closed intervals with left bounds in $\P$ and right bounds in $\Q$, we conclude that $\dom(\phi\circ \psi)$ is a closed interval of the same type, {\it i.e.} $\dom(\phi\circ \psi)=[p_1,q_1]$ with $p_1\in\P$ and $q_1\in \Q$.   This proves (i).

We use induction again to prove the statement regarding $p$ in (ii). The proof for $q$ is similar. Assume that $0<p$, since we are done otherwise. First note that (ii) holds for all functions  $\phi\in \{r_i^*,\ell_i^*:\, 1\leq i< N\}$. 
Now consider a legal function $\phi=f_1\circ\ldots\circ f_k$ with domain $[p,q]$. Let $\psi=f_2\circ\ldots\circ f_k$, and assume that $\dom(\psi)=[p',q']$. 
First observe that $p\geq p'$ as $[p,q]\subseteq [p',q']$. 
By induction hypothesis, we know that $\psi(p')$ touches zero. If $p=p'$, then $\phi(p)$ touches zero as well. So suppose that $p> p'$. This means that $\psi(p')\not\in\dom(f_1)$ but $\psi(p)\in\dom(f_1)$. 
So $f_1=\ell_j^*$ for some $j$, because $\psi(p')<\psi(p)$.
 Moreover, we must have $p=\psi^{-1}(r_j^*(0))$. But this implies that 
$$\phi(p)=\ell_j^*\circ\psi(p)=\ell_j^*\circ\psi(\psi^{-1}(r_j^*(0)))=0,$$
which finishes the proof.
\end{proof}

We end this section with a look at the special case where $N=2$, so there is only one pair of boundary functions, which we will call $r$ and $\ell$. In this case, $\P$ and $\Q$ are very simple, and the conditions of Theorem \ref{thm:nec-suff-cond-N} are trivially satisfied. Thus, a uniform linear embedding always exists. We state and prove this formally in the following proposition.

\begin{proposition}\label{example-positive}
Let $w$ be a well-separated two-valued diagonally increasing function with  upper and lower boundaries $r$, $\ell$ respectively. Then there exists a uniform linear embedding of $w$.
\end{proposition}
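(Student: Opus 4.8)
The plan is to construct the embedding directly, using a single parameter $d_1=1$.  Write $\alpha_1>\alpha_2$ for the two values of $w$ and set $r=r_1$, $\ell=\ell_1$; by Assumption~\ref{ass:w} and well-separatedness, $r^*\colon[0,\ell(1)]\to[r(0),1]$ and $\ell^*=(r^*)^{-1}\colon[r(0),1]\to[0,\ell(1)]$ are continuous strictly increasing bijections with $r^*(x)\ge x+\epsilon$.  The first step is a reduction: any strictly increasing continuous $\pi\colon[0,1]\to\Rrr$ satisfying the functional equation $\pi(r^*(x))=\pi(x)+1$ for all $x\in[0,\ell(1)]$ is a uniform linear embedding of $w$ with parameter $d_1=1$.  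Indeed, fix $x\le y$; since $w$ attains its maximum on the diagonal, $\ell(x)\le x\le y$, so $w(x,y)=\alpha_1$ if and only if $y\le r(x)$.  If $x\le\ell(1)$ then $r(x)=r^*(x)$, and the functional equation together with strict monotonicity of $\pi$ gives $y\le r^*(x)\iff\pi(y)\le\pi(x)+1$, i.e.\ $|\pi(x)-\pi(y)|\le d_1$.  If $x>\ell(1)$ then $r(x)=1$ (because $r^*(\ell(1))=1$ and $r$ is increasing), so $w(x,y)=\alpha_1$ always, and indeed $\pi(y)-\pi(x)<\pi(1)-\pi(\ell(1))=1$ by the functional equation at $\ell(1)$.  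The case $x>y$ follows by symmetry of $w$ and of $|\cdot|$; since $w$ is two-valued, this yields exactly Definition~\ref{def:uniformfinite} for $N=2$.

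It then remains to build such a $\pi$.  Because $r^*(x)\ge x+\epsilon$ on its domain, the orbit $p_i:=r^{(i)}(0)$ cannot accumulate, so if $m$ is the least index with $p_m>\ell(1)$ then $0=p_0<p_1<\dots<p_{m-1}\le\ell(1)<p_m\le1$.  Using that $r^*$ is an increasing bijection onto $[r(0),1]$, one checks the tiling identities $r^*([p_{i-1},p_i])=[p_i,p_{i+1}]$ for $1\le i\le m-1$ and $r^*([p_{m-1},\ell(1)])=[p_m,1]$, so $[0,1]$ is the union of the consecutive intervals $[p_0,p_1],\dots,[p_{m-1},p_m],[p_m,1]$.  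I define $\pi$ on $[p_0,p_1]=[0,r(0)]$ by $\pi(t)=t/r(0)$, and extend to the remaining intervals by imposing the functional equation: on $[p_i,p_{i+1}]$ $(1\le i\le m-1)$ and on $[p_m,1]$ set $\pi(y)=\pi(\ell^*(y))+1$, where $\ell^*$ carries these intervals back into intervals on which $\pi$ has already been defined.  Unrolled, this reads $\pi(y)=i+(\ell^*)^{i}(y)/r(0)$ on $[p_i,p_{i+1}]$, and similarly (with one extra application of $\ell^*$) on $[p_m,1]$.

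Finally I would verify that $\pi$ is well-defined, continuous, strictly increasing, and satisfies the functional equation.  The two formulas for $\pi$ agree at every shared endpoint $p_i$ (both give $i$, since $\pi(\ell^*(p_i))+1=\pi(p_{i-1})+1=i$), and the recursive definition terminates because the orbit is finite, so $\pi$ is well-defined on $[0,1]$.  On each interval of the cover $\pi$ is a composition of continuous strictly increasing maps, with image $[i,i+1]$ on $[p_i,p_{i+1}]$, so by the pasting lemma $\pi$ is continuous on $[0,1]$; since the pieces are consecutive with matching endpoint values it is strictly increasing, hence injective and (being continuous) measurable.  The identity $\pi(r^*(x))=\pi(x)+1$ on $[0,\ell(1)]$ is built into the construction.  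By the reduction of the first paragraph, $\pi$ is a uniform linear embedding of $w$, and $d_1=1$ is its parameter.

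The bulk of the work is routine; the main obstacle I anticipate is the bookkeeping that the listed intervals genuinely tile $[0,1]$ — in particular the identities $r^*(\ell(1))=1$ and $r^*([p_{m-1},\ell(1)])=[p_m,1]$, which is where one pins down the ``leftover'' piece $[p_m,1]$ — together with the observation that well-separatedness, in the concrete form $r^*(x)\ge x+\epsilon$, is exactly what forces the orbit of $0$ under $r$ to be finite and hence makes the recursive definition of $\pi$ terminate.
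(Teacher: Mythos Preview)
Your proposal is correct and follows essentially the same construction as the paper: iterate $r$ from $0$ to obtain a finite partition of $[0,1]$, define $\pi$ linearly on the first piece, and propagate by the rule $\pi(y)=\pi(\ell^*(y))+1$. The only difference is organizational---you isolate the reduction ``any continuous strictly increasing $\pi$ with $\pi(r^*(x))=\pi(x)+1$ is a uniform linear embedding'' up front and verify continuity explicitly via pasting, whereas the paper builds $\pi$ directly and checks the embedding property by a case analysis on which partition interval $x$ and $y$ fall into.
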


\begin{proof}
Suppose that $w(x,y)=\alpha_1$ if $\ell(x)\leq y\leq r(x)$, and $w(x,y)=\alpha_2$ otherwise. 
Let $x_0=0$ and $x_i=r^i(0)$ for $i\geq 1$. Since $w$ is well-separated, $r$ has positive distance from the diagonal, and thus there exists $k\in \mathbb{N}$ such that $r^k(0)<1$ and $r^{k+1}(0)=1$. Note that $\{x_i\}_{i=0}^k$ is a strictly increasing sequence, as $w$ is well-separated. Define the function $\pi:[0,1]\rightarrow \Rrr^{\geq 0}$ as follows. 
\begin{equation*}
\pi(x)= 
\left\{
\begin{array}{ll}
{\frac{x}{x_1}}  & \mbox{if }x\in [x_0,x_1],\\
\pi(\ell^i(x))+i & \mbox{if }x\in (x_i,x_{i+1}]\mbox{ for }1\leq i\leq k-1,\\
\pi(\ell^k(x))+k & \mbox{if }x\in (x_k,1].
\end{array}
\right.
\end{equation*} 
The function $\pi$ is well-defined and strictly increasing. We now prove that $\pi$ is a uniform linear embedding of $w$, namely
\begin{eqnarray}\label{eq:exp}
w(x,y)=\left\{
\begin{array}{cc}
  \alpha_1 & |\pi(x)-\pi(y)|\leq 1  \\
  \alpha_2 & |\pi(x)-\pi(y)|> 1   
\end{array}
\right..
\end{eqnarray}
{To do so, partition $[0,1]$ using the intervals $J_0=[x_0,x_1], J_1=(x_1,x_2],\ldots,J_k=(x_k,1]$. Let $x,y\in [0,1]$, and $x<y$. The inequality $|\pi(x)-\pi(y)|\leq 1$ holds precisely when either (i)\space $x,y$ belong to the same interval, say $J_i$, or (ii)\space they belong to consecutive intervals, say $J_i$ and $J_{i+1}$, and $x\geq \ell(y)$. If case (ii) happens, clearly $w(x,y)=\alpha_1$, as $\ell(y)\leq x<y$.
In case (i), $r^i(0)\leq x<y\leq r^{i+1}(0)$. Since $r$ is increasing, $r^{i+1}(0)\leq r(x)$, which implies that $x<y\leq r(x)$. Thus $w(x,y)=\alpha_1$ in this case as well. A similar argument proves that if $|\pi(x)-\pi(y)|> 1$ then $w(x,y)=\alpha_2$.}
\end{proof}
\begin{remark}{
Theorem \ref{example-positive} does not hold if we remove the condition that $w$ should be well-separated.
To show this, and to justify our conditions for $w$ in Theorem \ref{thm:nec-suff-cond-N}, let us consider the following simple example.
Suppose there exist $z_1< z_2$ in $(0,1)$ such that $r(z_1)=\ell(z_1)=z_1$, $r(z_2)=\ell(z_2)=z_2$ and for every $x\in(0,1)$, $\ell(x)<r(x)$.
\begin{figure}[h]
\centerline{\includegraphics[width=0.35\textwidth]{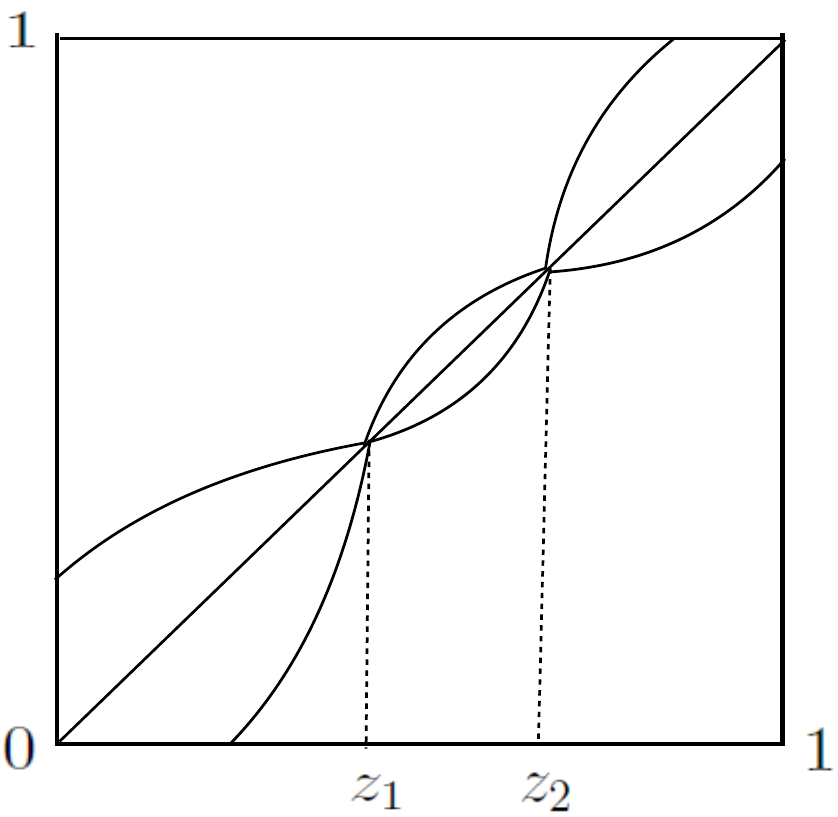}}
\end{figure}
Fix $x_0\in (z_1,z_2)$ and $x'_0\in(z_2,1)$, and  note that 
$$z_1=\ell(z_1)< \ell(x_0),\ \ r(x_0)< r(z_2)=z_2\ \mbox{ and }\ z_2=\ell(z_2)<\ell(x'_0).$$ 
Therefore the sequences $\{x_i\}_{i\in{\mathbb N}}$, $\{y_i\}_{i\in{\mathbb N}}$ and $\{x'_i\}_{i\in{\mathbb N}}$ are all infinite sequences, where $x_i=r^i(x_0)$, $y_i=\ell^i(x_0)$ and $x'_i=\ell_i(x'_0)$  for every $i\in{\mathbb N}$. Moreover, $x_i,y_i\in (z_1,z_2)$ and $ x'_i\in (z_2,1)$. 
If a uniform linear embedding $\pi$ exists, then for every $i\in{\mathbb N}$, we have
$$|\pi(x_{i})-\pi(x_{i+1})|\leq 1, \  \ |\pi(x_i)-\pi(x_{i+2})|>1, \  \ |\pi(x_i)-\pi(z_2)|>1.$$
Therefore, all of the points of the sequence $\{\pi(x_i)\}_{i\in{\mathbb N}}$ lie on one side of $\pi(z_2)$, say  inside the interval $(\pi(z_2)+1,\infty)$, in such a way that consecutive points have distance less than one. Moreover, $\pi(x_i)\rightarrow \infty$. Similar arguments, together with inequality $|\pi(x_i)-\pi(y_j)|>1$, imply that the sequence $\{\pi(y_i)\}_{i\in{\mathbb N}}$ is distributed in a similar fashion in $(-\infty, \pi(z_1)-1)$ and $\pi(y_i)\rightarrow -\infty$. Clearly, this process cannot be repeated for the sequence $\{\pi(x'_i)\}_{i\in{\mathbb N}}$. Hence a uniform linear embedding does not exist. }
\end{remark}

\section{Necessary properties of a uniform linear embedding}
\label{sec:prelim}

We devote this section to the study of uniform linear embeddings, and their interplay with boundary functions. We will assume that a uniform linear embedding $\pi$ exists, and show how the boundaries and points in $\P$ severely restrict $\pi$.
We show that, in essentially all cases, a uniform linear embedding is necessarily strictly monotone. 
Since a uniform linear embedding $\pi$ is not assumed to be continuous in general, we define the left limit $\pi^-$ and the right limit $\pi^+$ at each point, and study their behaviour with respect to the boundaries. Finally, we will use these results to prove the necessity part of Theorem \ref{thm:nec-suff-cond-N}.

Comparing Assumption \ref{ass:w} and Definition \ref{def:uniformfinite} we see that  $\pi$ is a uniform linear embedding if and only if
for all $x\in \dom(r_i^*)$ and $1\leq i< N$, 
\begin{equation}\label{eq:link-r}
\begin{array}{ll}
|\pi(z)-\pi(x)|\leq d_i&\text{if } x\leq z\leq r_i^*(x)\mbox{ or } x\not\in\dom(r_i^*)\\
|\pi(z)-\pi(x)|>d_i &\text{if } z> r_i^*(x).
\end{array}
\end{equation}
Similarly,  for all $x\in \dom(\ell_i^*)$ and $1\leq i< N$, 
\begin{equation}\label{eq:link-l} 
\begin{array}{ll}
|\pi(z)-\pi(x)|\leq d_i&\text{if }   \ell_i^*(x)\leq z\leq x \mbox{ or } x\not\in\dom(\ell_i^*)\\
|\pi(z)-\pi(x)|>d_i &\text{if } z<\ell_i^*(x).
\end{array}
\end{equation}
These conditions simplify the question of existence of uniform linear embeddings.

Recall that, in this section we assume that a linear uniform embedding exists. We now state this as a formal assumption.

\begin{assumption}\label{ass:pi-1}
Let $w$ be as in Assumption \ref{ass:w}. Assume that
we are not in the case of Exception \ref{SpecialCase}. Assume that $w$ admits a uniform linear embedding $\pi:[0,1]\rightarrow\Rrr$ with parameters $0<d_1<d_2<\dots <d_{N-1}$ as in Definition \ref{def:uniformfinite}. Assume without loss of generality that $\pi(0)<\pi(r_1^*(0))$, and $\pi(0)=0$.
\end{assumption}

From (\ref{eq:link-r}) and (\ref{eq:link-l}), we obtain important properties  for a uniform linear embedding $\pi$. These are listed in Corollaries \ref{cor:increasing} and Proposition \ref{prop:pi-properties} below. {Indeed in what follows, we prove that if a uniform linear embedding $\pi$ exists then it must be monotone. The monotonicity of $\pi$ for the cases $\ell_1(1)>r_1(0)$ and $\ell_1(1)<r_1(0)$ are proved using different techniques. The reason is that 
when $\ell_1(1)>r_1(0)$, there are at least two points of the form ${r_i^*}^k(x)$. However, the condition $\ell_1(1)<r_1(0)$ implies that the maximum integer $k$ for which ${r_i^*}^k(x)$ is defined is 1, for all $1\leq i\leq N-1$.
If $\ell_1(1)= r_1(0)$, then $\P=\Q$ and $r_{N-1}(0)>r_1(0)= \ell_1(1)$, a case that was excluded by Exception \ref{SpecialCase}.}
\begin{lemma}\label{lemma:chain}
Let $w$ and $\pi$ be as in Assumption \ref{ass:pi-1}. Then we have the following. Let  $x\in [0,1]$.  Let $k_\max$ be the largest positive integer with ${r_1^*}^{k_\max}(x)<1$. Then the sequence $\{\pi({r_1^*}^k(x))\}_{k=0}^{k_\max}$ is strictly increasing. Moreover $\pi({r_1^*}^{k_\max}(x))<\pi(1)$.
\end{lemma}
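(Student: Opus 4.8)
The plan is to analyze the sequence $x_0 = x$, $x_1 = r_1^*(x)$, $x_2 = {r_1^*}^2(x)$, \dots, $x_{k_{\max}} = {r_1^*}^{k_{\max}}(x)$, using the link-probability inequalities (\ref{eq:link-r}). The key observation is that $w(x_k, x_{k+1}) = \alpha_1$ while $w(x_k, z) = \alpha_N < \alpha_1$ for every $z > x_{k+1} = r_1^*(x_k)$ (here $N$ could be $2$, but $\alpha_1$ is always strictly the top value). Hence for each $k$ we have $|\pi(x_{k+1}) - \pi(x_k)| \le d_1$, and $|\pi(z) - \pi(x_k)| > d_1$ whenever $z > x_{k+1}$.

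The argument I would give is an induction on $k$ showing that $\pi(x_0) < \pi(x_1) < \dots < \pi(x_{k_{\max}})$ and, simultaneously, that $\pi(x_k) < \pi(1)$. For the base case, $\pi(x_0) < \pi(x_1)$ holds when $x_0 = 0$ by the normalization in Assumption \ref{ass:pi-1}; for general $x$ I would first note that since $x_1 = r_1^*(x_0) > x_0$ and $x_2 = r_1^*(x_1) > x_1$, applying the second line of (\ref{eq:link-r}) with the pair $(x_0, x_2)$ gives $|\pi(x_2) - \pi(x_0)| > d_1$, while the pair $(x_0,x_1)$ gives $|\pi(x_1)-\pi(x_0)|\le d_1$; together these force $\pi(x_0)$ and $\pi(x_2)$ to lie on opposite sides of $\pi(x_1)$, pinning down the sign. (This is exactly the point flagged in the discussion preceding the lemma: the hypothesis $\ell_1(1) > r_1(0)$ — equivalently $k_{\max} \ge 2$ for some starting point — is what makes a third point available so that the direction of increase can be read off. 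When $k_{\max} = 1$ the statement is vacuous beyond a single step and the sign is fixed by the normalization.) For the inductive step, suppose $\pi(x_0) < \dots < \pi(x_k)$; I would apply (\ref{eq:link-r}) to the pair $(x_{k-1}, x_{k+1})$, using $x_{k+1} > x_k = r_1^*(x_{k-1})$, to get $|\pi(x_{k+1}) - \pi(x_{k-1})| > d_1$; combined with $|\pi(x_{k+1}) - \pi(x_k)| \le d_1$ and $|\pi(x_k) - \pi(x_{k-1})| \le d_1$ and the fact that $\pi(x_{k-1}) < \pi(x_k)$, this forces $\pi(x_{k+1}) > \pi(x_k)$ (otherwise $x_{k+1}$ would lie within distance $d_1$ of $x_{k-1}$).

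For the final claim $\pi(x_{k_{\max}}) < \pi(1)$: since $1 > x_{k_{\max}}$ and $r_1^*(x_{k_{\max}-1}) = x_{k_{\max}} < 1$, the point $z = 1$ satisfies $z > r_1^*(x_{k_{\max}-1})$, so (\ref{eq:link-r}) gives $|\pi(1) - \pi(x_{k_{\max}-1})| > d_1$; since also $|\pi(x_{k_{\max}}) - \pi(x_{k_{\max}-1})| \le d_1$ and $\pi(x_{k_{\max}}) > \pi(x_{k_{\max}-1})$, the point $\pi(1)$ must lie beyond $\pi(x_{k_{\max}})$, i.e. $\pi(x_{k_{\max}}) < \pi(1)$. (For the degenerate case $k_{\max}=1$ with starting point $x = 0$, one uses $\pi(0) = 0 < \pi(r_1^*(0))$ and the same comparison with $z = 1$.)

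\textbf{Main obstacle.} I expect the only delicate point to be the determination of the \emph{direction} of the monotone sequence — i.e. ruling out that $\pi(x_1) < \pi(x_0)$ — since the inequalities from (\ref{eq:link-r}) are symmetric in sign and only constrain absolute values of differences. This is precisely why a third available point $x_2$ (guaranteed by $k_{\max} \ge 2$, which is tied to $\ell_1(1) > r_1(0)$) is needed, and why the normalization $\pi(0) < \pi(r_1^*(0))$ is built into Assumption \ref{ass:pi-1} to handle $x = 0$. Once the direction is fixed at the first step, the induction propagates it cleanly; the rest is bookkeeping with the triangle-type comparisons above.
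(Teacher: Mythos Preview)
Your induction step and the final comparison with $\pi(1)$ are correct and match the paper's argument. The gap is exactly where you flagged it as the ``main obstacle'', and you have not actually resolved it. From
\[
|\pi(x_1)-\pi(x_0)|\le d_1,\quad |\pi(x_2)-\pi(x_1)|\le d_1,\quad |\pi(x_2)-\pi(x_0)|>d_1
\]
you correctly conclude that $\pi(x_1)$ lies between $\pi(x_0)$ and $\pi(x_2)$. But that is \emph{monotonicity}, not direction: both $\pi(x_0)<\pi(x_1)<\pi(x_2)$ and $\pi(x_0)>\pi(x_1)>\pi(x_2)$ are fully consistent with these three inequalities. A third point does not ``pin down the sign'' for a general starting point $x$; it only tells you the sequence cannot oscillate.

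The paper resolves the direction globally, not locally. It first runs the argument for $x=0$, where the normalization $\pi(0)<\pi(r_1^*(0))$ does fix the direction, and concludes $\pi(0)<\pi(1)$. For general $x$, one knows the chain $\pi(x_0),\dots,\pi(x_{k_{\max}}),\pi(1)$ is monotone; extending backward by repeated $\ell_1^*$ until one lands at some $y\in[0,r_1^*(0))$ gives a longer monotone chain $\pi(y),\dots,\pi(1)$. Now $|\pi(y)-\pi(0)|\le d_1$ while $|\pi(1)-\pi(0)|>d_1$ (since $1>r_1^*(0)$), so $\pi(y)$ is closer to $\pi(0)$ than $\pi(1)$ is. If the chain were $\pi$-decreasing one would have $\pi(y)>\pi(1)>\pi(0)$, forcing $|\pi(y)-\pi(0)|>|\pi(1)-\pi(0)|$, a contradiction. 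Hence the chain is increasing. The paper's text is terse (and has a typo, writing ``$x_0=0$'' for general $x$), but this is the mechanism: the direction for general $x$ is inherited from the anchor $\pi(0)<\pi(1)$, not from any local three-point comparison.
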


\begin{proof}
For $0\leq k\leq k_\max$, let $x_k={r_1^*}^{k}(x)$, and let $x_{k_\max +1}=1$. First we 
show that the sequence $\{ x_i\}_{i=1}^{k_\max+1}$ is strictly monotone, and hence, by our assumption, strictly increasing.

By Assumption \ref{ass:pi-1}, $\pi(x_1)>\pi(x_0)$. 
We will show by induction that for each $k$, $1\leq k\leq k_\max +1$, $\pi(x_k)>\pi(x_{k-1})$. The base case is assumed. Let $k>1$, and assume that $\pi(x_{k-2})<\pi(x_{k-1})$. By (\ref{eq:link-r}), 
\[
|\pi(x_k)-\pi(x_{k-1})|\leq d_1<|\pi(x_k)-\pi(x_{k-2})|.
\] 
Therefore,  $\pi(x_k)>\pi(x_{k-1})$. This shows that $\{ \pi(x_k)\}_{k=0}^{k_\max}=\{\pi({r_1^*}^k(x))\}_{k=0}^{k_\max}$ is strictly increasing. 
Moreover 
\[
|\pi(1)-\pi(x_{k})|\leq d_1<|\pi(1)-\pi(x_{k-1})|.\]
Therefore $\pi(x_k)<\pi(1)$, and we conclude that the sequence $\{\pi(x_k)\}_{k=0}^{k_\max +1}$ is strictly increasing.  If $\pi(x_1)<\pi(x_0)$, then an analogous argument shows that the sequence is strictly decreasing. 

Now by Assumption \ref{ass:pi-1}, $\pi (r_1^*(0))>\pi(0)$, so if we take $x=0$, then the sequence is strictly increasing. This implies that $\pi(0) < \pi (1)$. Since for each choice of $x$, $x_0=0$ and $x_{k_{\max}+1}=1$, we have that, for each choice of $x$, the sequence $\{\pi(x_k)\}_{k=0}^{k_\max +1}$ is increasing. 

\end{proof}

{\noindent {\bf Remark.} The argument in the proof of the previous lemma still holds if  we remove the requirement that a uniform embedding $\pi$ should be injective. If $\pi$ is a uniform embedding which is not injective, then the argument shows that $\pi$ must be monotone, but not necessarily strictly monotone.}

\begin{lemma}\label{claim:r1}
Let $w$ and $\pi$ be as in Assumption \ref{ass:pi-1}, and $r_1^*(0)<\ell_1^*(0)$. Let $x\in[0,1]$ and $k_\max$ be the largest positive integer with ${r_1^*}^{k_\max}(x)<1$.
For every $0\leq k\leq k_\max-1$, 
\[
\pi([{r_1^*}^k(x), {r_1^*}^{k+1}(x)])\subseteq [\pi({r_1^*}^k(x)), \pi({r_1^*}^{k+1}(x))].\]
In addition, if $y\in[{r_1^*}^{k_\max}(x),1]$ then $\pi({r_1^*}^{k_\max}(x))\leq \pi(y)\leq\pi(1)$.
\end{lemma}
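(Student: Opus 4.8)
The plan is to argue cell by cell along the chain $x_0<x_1<\dots<x_{k_{\max}}<x_{k_{\max}+1}:=1$, where $x_j:=(r_1^*)^j(x)$; by Lemma~\ref{lemma:chain} this chain is strictly increasing and $\pi$ is strictly increasing along it. Two facts carry the argument. First, for $j\ge 1$ the identity $x_j=r_1^*(x_{j-1})$ gives $\ell_1^*(x_j)=x_{j-1}$ and $x_j\ge r_1(0)$, so $\ell_1^*$ is defined on each cell $[x_j,x_{j+1}]$ with $j\ge 1$. Second, a \emph{pinch}: for $y\in[x_k,x_{k+1}]$, equation~(\ref{eq:link-r}) at $x_k$ (since $x_k\le y\le r_1^*(x_k)=x_{k+1}$) and equation~(\ref{eq:link-l}) at $x_{k+1}$ (since $\ell_1^*(x_{k+1})=x_k\le y\le x_{k+1}$) give $|\pi(y)-\pi(x_k)|\le d_1$ and $|\pi(y)-\pi(x_{k+1})|\le d_1$; together with $\pi(x_k)<\pi(x_{k+1})$ and $\pi(x_{k+1})-\pi(x_k)\le d_1$ this already confines $\pi(y)$ to $[\pi(x_{k+1})-d_1,\ \pi(x_k)+d_1]$. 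So the whole proof reduces to excluding the undershoot $\pi(y)<\pi(x_k)$ and the overshoot $\pi(y)>\pi(x_{k+1})$ on each cell, together with the analogous statement on the final interval $[x_{k_{\max}},1]$.

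For an interior cell $[x_k,x_{k+1}]$ with $1\le k\le k_{\max}-1$ I would kill the undershoot using $x_{k-1}$: if $x_k<y$ then $y>r_1^*(x_{k-1})$, so (\ref{eq:link-r}) at $x_{k-1}$ gives $|\pi(y)-\pi(x_{k-1})|>d_1$; the alternative $\pi(y)<\pi(x_{k-1})-d_1$ combined with $\pi(y)\ge\pi(x_k)-d_1$ would force $\pi(x_k)<\pi(x_{k-1})$, contradicting the chain, so $\pi(y)>\pi(x_{k-1})+d_1\ge\pi(x_k)$. Symmetrically the overshoot is killed by $x_{k+2}$ (with $x_{k_{\max}+1}:=1$): from $y<x_{k+1}=\ell_1^*(x_{k+2})$, equation~(\ref{eq:link-l}) at $x_{k+2}$ gives $|\pi(y)-\pi(x_{k+2})|>d_1$, and the same bookkeeping against $\pi(x_{k+1})<\pi(x_{k+2})$ yields $\pi(y)\le\pi(x_{k+1})$. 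Thus each interior cell is self-contained, given only Lemma~\ref{lemma:chain}. For the interval $[x_{k_{\max}},1]$ the lower bound is again the $x_{k_{\max}-1}$ move; for the upper bound I would push $y\in(x_{k_{\max}},1)$ back by $\ell_1^*$, noting that maximality of $k_{\max}$ forces $x_{k_{\max}}\ge\ell_1(1)$, so $\ell_1^*(y)$ lands in the already-treated cell $[x_{k_{\max}-1},x_{k_{\max}}]$, giving $\pi(\ell_1^*(y))\le\pi(x_{k_{\max}})<\pi(1)$; since (\ref{eq:link-l}) at $1$ then forces $\pi(\ell_1^*(y))<\pi(1)-d_1$, we get $\pi(y)\le\pi(\ell_1^*(y))+d_1<\pi(1)$.

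The hard part will be the base cell $[x_0,x_1]$, where there is no point to the left of $x_0$ to kill the undershoot, together with the end-effects near $x_{k_{\max}}$, where $r_1^*$ need no longer be iterable. This is exactly where the standing hypothesis of the lemma enters: it provides the extra iterate of $r_1^*$ near the ends, so that either $x_2=(r_1^*)^2(x)$ is available or $x_2:=1$ with $x_1=\ell_1(1)$. The overshoot on $[x_0,x_1]$ then goes by the $x_2$ move above, and for the undershoot I would use that $r_1^*(y)$ lies strictly between $x_1$ and $x_2$ — hence in an already-treated cell, so $\pi(r_1^*(y))\ge\pi(x_1)>\pi(x_0)$ — after which $r_1^*(y)>x_1=r_1^*(x_0)$ makes (\ref{eq:link-r}) at $x_0$ give $|\pi(r_1^*(y))-\pi(x_0)|>d_1$ while (\ref{eq:link-r}) at $y$ gives $|\pi(r_1^*(y))-\pi(y)|\le d_1$; a short sign analysis of $\pi(r_1^*(y))-\pi(x_0)$ rules out $\pi(y)<\pi(x_0)$. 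This forces an ordering of the argument — interior cells and the tail first, the base cell last, and the base cell and the final interval treated jointly when $k_{\max}$ is small — and it is precisely this boundary bookkeeping, rather than the interior pinch, that constitutes the real obstacle.
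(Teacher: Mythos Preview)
Your interior-cell argument is correct and close in spirit to the paper's, but the paper organises the pinch differently in a way that avoids your boundary bookkeeping entirely. Rather than reaching two steps along the chain to $x_{k-1}$ and $x_{k+2}$, the paper applies $r_1^*$ and $\ell_1^*$ to $y$ itself: since $y\in(x_k,x_{k+1})$ lies in $\dom(r_1^*)\cap\dom(\ell_1^*)=[r_1^*(0),\ell_1^*(1)]$ (this is where the hypothesis $r_1^*(0)<\ell_1^*(1)$ actually enters, not as an ``extra iterate''), one has $r_1^*(y)>x_{k+1}$ and $\ell_1^*(y)<x_k$. Then $|\pi(x_{k+1})-\pi(r_1^*(y))|\le d_1<|\pi(x_k)-\pi(r_1^*(y))|$ together with $|\pi(y)-\pi(r_1^*(y))|\le d_1$ and $\pi(x_k)<\pi(x_{k+1})$ forces $\pi(y)>\pi(x_k)$; the symmetric argument with $\ell_1^*(y)$ gives $\pi(y)<\pi(x_{k+1})$. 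Neither bound appeals to an ``already treated'' neighbouring cell, so there is no induction and no ordering to manage.

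Your version has a genuine gap at $k=k_{\max}-1$. You write $x_{k+1}=\ell_1^*(x_{k+2})$ with $x_{k_{\max}+1}:=1$, but in general $x_{k_{\max}}>\ell_1^*(1)$ strictly (maximality of $k_{\max}$ yields only $x_{k_{\max}}\ge\ell_1^*(1)$), so $y<x_{k_{\max}}$ does not give $y<\ell_1^*(1)$ and your overshoot step via~(\ref{eq:link-l}) at $1$ fails for $y\in(\ell_1^*(1),x_{k_{\max}})$. This in turn makes your upper bound on $[x_{k_{\max}},1]$ circular, since it relies on the upper bound for the cell $[x_{k_{\max}-1},x_{k_{\max}}]$, which is precisely the one left unestablished. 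The paper's fix for this boundary case is to use $1$ in place of $r_1^*(y)$ when $y>\ell_1^*(1)$, while still using $\ell_1^*(y)$ for the other bound. Note that you already revert to the paper's device (moving $y$ by $r_1^*$) for the base-cell undershoot; using it throughout removes the need for your ordering of cells and eliminates the bootstrapping that creates the gap.
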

\begin{proof}

By our assumption we have $r_1^*(0)<\ell_1^*(1)$. Therefore

\[
\dom(r_1^*)\cap\dom(\ell_1^*)=[0,\ell_1^*(1)]\cap[r_1^*(0),1]=[r_1^*(0),\ell_1^*(1)].
\]

For $1\leq k\leq k_\max-2$, let $x_k={r_1^*}^{k}(x)$. Suppose $y\in(x_k,x_{k+1})$. Then $y\in\dom(r_1^*)\cap\dom(\ell_1^*)$ and we have $r_1^*(y)>x_{k+1}$ and $\ell_1^*(y)<x_k$.
From Lemma \ref{lemma:chain}, we know that $\pi$ is increasing on $\{ x_k\}_{k=1}^{k_\max}$, in particular 
$\pi(x_k)<\pi(x_{k+1})$. Moreover $\pi(\ell_1^*(y))<\pi(y)<\pi(r_1^*(y))$.
To satisfy the inequalities listed in (\ref{eq:link-r}) and (\ref{eq:link-l}), we restrict the location of $\pi(y)$. Namely $x_k<x_{k+1}<r_1^*(y)$, so 
\[
|\pi(x_{k+1})-\pi(r_1^*(y))|\leq d_1<|\pi(x_k)-\pi(r_1^*(y))|.
\]
Moreover, $|\pi(y)-\pi(r_1^*(y))|\leq d_1$, so $\pi(x_k)<\pi(y)$. 

Similarly, $\ell_1^*(y)<x_{k}<x_{k+1}$, and thus 
\[
|\pi(x_k)-\pi(\ell_1^*(y))| \leq d_1<|\pi(x_{k+1})-\pi(\ell_1^*(y))|.
\]
Also, $|\pi(y)-\pi(\ell_1^*(y))|\leq d_1$, and so $\pi(y)<\pi(x_{k+1})$. Thus $\pi(y)$ must belong to $(\pi(x_k), \pi(x_{k+1}))$. 

Now let $k=k_\max-1$ and $y\in(x_k,x_{k+1})$. Then either $y\leq\ell_1^*(1)$ or $y>\ell_1^*(1)$. For the case $y\leq\ell_1^*(1)$ by a similar discussion we obtain $\pi(y)\in (\pi(x_k),\pi(x_{k+1}))$. The case $y>\ell_1^*(1)$ can be dealt with in a similar fashion, but using 1 to take the place of $r_1^*(y)$. The remaining cases,  $y\in(0,x_1)$ and $y\in(x_{k_\max},1)$, can be done similarly.
\end{proof}

\begin{corollary}\label{cor:increasing}
If $w$ has a uniform linear embedding $\pi$ as in Assumption \ref{ass:pi-1} and $r_1^*(0)<\ell_1^*(1)$, then 
$\pi$ is strictly increasing. In particular, $\pi$ is continuous on all except countably many points in $[0,1]$.
\end{corollary}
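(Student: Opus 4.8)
The plan is to bootstrap the two preceding lemmas, which only assert that $\pi$ is \emph{coarsely} monotone, into genuine strict monotonicity. I would put $x_0=0$, $x_k={r_1^*}^k(0)$ for $1\le k\le k_{\max}$, and $x_{k_{\max}+1}=1$, where $k_{\max}$ is finite because $w$ is well separated. Applying Lemma~\ref{lemma:chain} at the point $0$ (where Assumption~\ref{ass:pi-1} gives $\pi(r_1^*(0))>\pi(0)$) yields $0=x_0<x_1<\dots<x_{k_{\max}+1}=1$ together with $\pi(x_0)<\pi(x_1)<\dots<\pi(x_{k_{\max}+1})$, and Lemma~\ref{claim:r1} (again at $0$) says that $\pi$ maps each block $[x_k,x_{k+1}]$ into $[\pi(x_k),\pi(x_{k+1})]$. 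Since these image intervals are stacked in strictly increasing order and overlap only at common endpoints, it follows --- modulo routine checking at the points $x_k$ --- that $\pi$ is strictly increasing on $[0,1]$ \emph{provided} it is strictly increasing on each individual block $[x_k,x_{k+1}]$. So the whole statement reduces to this block-wise claim.

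To prove strict monotonicity on a single block I would move it away from the diagonal into a neighbouring block, where Lemma~\ref{claim:r1} already supplies the interval containment, and then extract strictness from the link inequalities (\ref{eq:link-r})--(\ref{eq:link-l}). Concretely, for $u<v$ in the block $[0,x_1]$, set $\bar v=r_1^*(v)$; then $\bar v\in[x_1,x_2]$, so $\pi(\bar v)\ge\pi(x_1)\ge\pi(u),\pi(v)$. From $w(v,\bar v)=\alpha_1$ one gets $\pi(\bar v)-\pi(v)\le d_1$, while $\bar v>r_1^*(u)$ forces $w(u,\bar v)<\alpha_1$, hence $\pi(\bar v)-\pi(u)>d_1$; subtracting gives $\pi(v)>\pi(u)$. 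For a block $[x_k,x_{k+1}]$ with $k\ge1$ the mirror computation works with $\ell_1^*$ in place of $r_1^*$: set $u'=\ell_1^*(u)$, $v'=\ell_1^*(v)\in[x_{k-1},x_k]$, use $r_1^*\circ\ell_1^*=\mathrm{id}$ to recover $r_1^*(u')=u$, note $\pi(u'),\pi(v')\le\pi(x_k)\le\pi(u),\pi(v)$ by Lemma~\ref{claim:r1}, and combine $|\pi(u)-\pi(u')|\le d_1$ (from $w(u,u')=\alpha_1$) with $|\pi(u')-\pi(v)|>d_1$ (since $v>r_1^*(u')=u$ gives $w(u',v)<\alpha_1$) to conclude $\pi(v)>\pi(u)$; the truncated last block $[x_{k_{\max}},1]$ needs the usual substitution of the constant $1$ for $r_1^*$, exactly as in Lemma~\ref{claim:r1}. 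This establishes that $\pi$ is strictly increasing, and the last sentence of the corollary is then immediate, since a strictly increasing function on $[0,1]$ has only jump discontinuities and hence at most countably many of them.

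The main obstacle is precisely the gap between Lemmas~\ref{lemma:chain}--\ref{claim:r1} and the corollary: those lemmas pin $\pi$ down only up to an arbitrary rearrangement inside each interval $[x_k,x_{k+1}]$, and all the content lies in excluding such rearrangements. The device that closes the gap is the inversion identities $\ell_1^*\circ r_1^*=\mathrm{id}$ and $r_1^*\circ\ell_1^*=\mathrm{id}$ on their domains, valid because $w$ is well separated --- this is exactly where the hypothesis $r_1^*(0)<\ell_1^*(1)$ enters, as it guarantees the relevant domain overlaps are nondegenerate --- together with the strict parts of (\ref{eq:link-r})--(\ref{eq:link-l}): trading a point of one block for a point of an adjacent block preserves all $\alpha_1$-relationships, and the strict inequalities then force the order to be preserved. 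The only secondary issue is the purely bookkeeping one of handling the endpoints $x_k$ and the truncated final block, which requires care but no new idea.
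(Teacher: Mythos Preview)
Your argument is correct, but it takes a longer route than the paper's. The key difference is that you anchor the chain $\{x_k\}$ at $0$ and then work to establish strict monotonicity \emph{inside} each fixed block $[x_k,x_{k+1}]$ via the shifting trick with $r_1^*$ or $\ell_1^*$ and the strict parts of (\ref{eq:link-r})--(\ref{eq:link-l}). The paper instead exploits the fact that Lemmas~\ref{lemma:chain} and~\ref{claim:r1} are stated for an \emph{arbitrary} starting point: given any $x<y$, it builds the chain $x_k={r_1^*}^k(x)$ starting at $x$ itself, so that $y$ lands in some block $[x_l,x_{l+1}]$ and Lemma~\ref{claim:r1} immediately gives $\pi(y)\ge\pi(x_l)\ge\pi(x_0)=\pi(x)$; strictness then comes for free from the standing assumption that $\pi$ is injective.

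What each approach buys: the paper's argument is two lines once you remember that the lemmas are not tied to the base point $0$, and it uses injectivity of $\pi$ (built into Definition~\ref{uniform}) as the source of strictness. Your argument never invokes injectivity --- it extracts strictness directly from the link inequalities --- which is arguably more robust, but at the cost of the extra casework on blocks and endpoints. Both are valid; you simply missed the shortcut of sliding the base point of the chain.
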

\begin{proof}
Let $0\leq x<y\leq 1$. Let $k_{\max}$ be the largest non-negative integer with ${r_1^*}^{k_{\max}}(x)<1$. For $0\leq k\leq k_{\max}$, let $x_k={r_1^*}^k(x)$. By Lemma \ref{lemma:chain}, we know that $\pi$ is increasing on $\{x_k\}_{k=1}^{k_{\max}}$. Let $l$ be the largest integer (possibly zero) such that $x_l\leq y$.  If $l<k_{\max}$, then $y\in [x_l,x_{l+1}]$, and by Lemma \ref{claim:r1}, $\pi(y)\in[\pi(x_l),\pi(x_{l+1})]$. Therefore, $\pi(y)\geq\pi(x_l)\geq \pi(x)$, and since $\pi$ is an injection, $\pi(y)>\pi(x)$. If $l=k_{\max}$, then $y\in[x_l,1]$ and again by Lemma \ref{claim:r1},  it follows that $\pi(y)>\pi(x)$. So, $\pi$ is strictly  increasing. Consequently, any discontinuity in $\pi$ is of the form of a jump, and the sum of such jumps must be at most $\pi(1)$. Thus, $\pi$ is discontinuous in at most countably many points. 

\end{proof}
{ We also have monotonicity of $\pi$ when $\ell_1^*(1)<r_1^*(0)$. The proof of this fact can be found in \cite{hoda-thesis}, Chapter 4, Section 4.3, Lemmas 4.3.5--4.3.7. The proof is straightforward but rather technical, and therefore has been omitted here.}

\begin{definition}
Let $\pi$ be as in Assumption \ref{ass:pi-1}. Let 
\begin{eqnarray*}
\pi^+(x)&=\inf\{\pi(z):\, z>x\} &\text{for }x\in[0,1) \text{, and} \\
\pi^-(x)&=\sup\{\pi(z):\, z<x\} &\text{for }x\in (0,1].
\end{eqnarray*}
Note that these are limits of $\pi$ at $x$ from right and left respectively.
\end{definition}

\begin{proposition}\label{prop:pi-properties}
If $w$ has a uniform linear embedding $\pi$ as in Assumption \ref{ass:pi-1}, then 
 \begin{itemize}
 \item[(i)] If $\pi$ is continuous at  $x\in (0,\ell_i^*(1))$ then $\pi(r_i^*(x))=\pi(x)+d_i$. Likewise, if $\pi$ is continuous at  $x\in (r_i^*(0),1)$ then $\pi(\ell_i^*(x))=\pi(x)-d_i$.
 \item[(ii)] For all $x$ for which the limits are defined, we have that
 \begin{eqnarray*}
 \pi^{+}(\ell_i^*(x))&=\pi^{+}(x)-d_i\text{ and }\pi^{+}(r_i^*(x))&=\pi^{+}(x)+d_i, \text{ and}\\
 \pi^{-}(\ell_i^*(x))&=\pi^{-}(x)-d_i\text{ and }\pi^{-}(r_i^*(x))&=\pi^{-}(x)+d_i.
 \end{eqnarray*}
 
 \item[(iii)] If $\pi$ is continuous (from both sides) at $x\in (0,\ell^*(1))$ then  $\pi$ is also continuous  at $r_i^*(x)$. Likewise, if $\pi $ is continuous at $x\in (r^*_i(0),1)$, then $\pi$ is also continuous  at  $\ell_i^*(x)$.
 \end{itemize}
\end{proposition}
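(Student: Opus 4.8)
The plan is to prove the three parts in order, deriving (i) directly from the defining inequalities (\ref{eq:link-r})--(\ref{eq:link-l}) together with continuity, then (ii) by a limiting argument that replaces pointwise continuity with one-sided limits, and finally (iii) as a short corollary of (ii). Throughout we exploit Corollary \ref{cor:increasing} (in the case $r_1^*(0)<\ell_1^*(1)$) and the analogous fact quoted from \cite{hoda-thesis} (in the case $\ell_1^*(1)<r_1^*(0)$): in either case $\pi$ is strictly increasing, hence its one-sided limits $\pi^-$ and $\pi^+$ exist everywhere, $\pi$ is continuous at all but countably many points, and $\pi^-(x)\le \pi(x)\le \pi^+(x)$ with equality of all three exactly when $\pi$ is continuous at $x$.

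For part (i): fix $x\in(0,\ell_i^*(1))$ and set $y=r_i^*(x)$. Since $w(x,y)=\alpha_i$ while $w(x,z)<\alpha_i$ for every $z>y$, the conditions (\ref{eq:link-r}) give $|\pi(y)-\pi(x)|\le d_i$ and $|\pi(z)-\pi(x)|>d_i$ for all $z>y$. Because $\pi$ is increasing and $x<y$, we have $\pi(y)-\pi(x)\le d_i$; letting $z\downarrow y$ and using continuity of $\pi$ at $x$ is not quite what we need — instead, continuity of $\pi$ \emph{at $y$} (which we are \emph{not} given) would finish it, so the correct move is: continuity of $\pi$ at $x$ does not immediately help, but the right-limit of $|\pi(z)-\pi(x)|$ as $z\downarrow y$ is $\pi^+(y)-\pi(x)\ge d_i$; combined with $\pi(y)-\pi(x)\le d_i\le \pi^+(y)-\pi(x)$ we only get $\pi(y)\le\pi(x)+d_i\le\pi^+(y)$. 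To pin down equality we instead argue symmetrically from $\ell_i^*$: since $x=\ell_i^*(y')$ fails in general, the cleanest route is to use (ii) first and deduce (i) from it, OR to observe that when $\pi$ is continuous \emph{at $x$}, applying the same reasoning to a sequence $x_n\uparrow x$ and $x_n\downarrow x$ forces $\pi(r_i^*(x))=\pi(x)+d_i$ by squeezing $r_i^*(x_n)\to r_i^*(x)$ (monotonicity of $r_i^*$) against the strict inequalities. I would in fact reorganize and prove (ii) first, then obtain (i) as the special case where $\pi^-(x)=\pi^+(x)=\pi(x)$.

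For part (ii): fix $x$ for which $\pi^+(\ell_i^*(x))$ and $\pi^+(x)$ are defined. Take any sequence $z_n\downarrow x$ with $z_n>x$. Since $r_i^*$ is strictly increasing and continuous on its domain (Assumption \ref{ass:w}, well-separatedness), $r_i^*(z_n)\downarrow r_i^*(x)$, so $\pi(r_i^*(z_n))\to\pi^+(r_i^*(x))$ and $\pi(z_n)\to\pi^+(x)$. Applying (\ref{eq:link-r}) at the point $z_n$ (legitimate once $z_n$ lies in $\dom(r_i^*)$, which holds for $n$ large): $|\pi(r_i^*(z_n))-\pi(z_n)|\le d_i$, while for $z>r_i^*(z_n)$ we have $|\pi(z)-\pi(z_n)|>d_i$. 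Passing to the limit in the first inequality gives $\pi^+(r_i^*(x))-\pi^+(x)\le d_i$. For the reverse inequality, fix $\epsilon>0$ and pick $z$ with $r_i^*(x)<z<r_i^*(x)+\epsilon$ and $z=r_i^*(x')$ for some $x'>x$ close to $x$ (possible since $r_i^*$ is an increasing bijection, using Observation \ref{obs1}/well-separatedness); but $z>r_i^*(z_n)$ once $z_n<x'$, so $|\pi(z)-\pi(z_n)|>d_i$, and letting $n\to\infty$ then $z\downarrow r_i^*(x)$ yields $\pi^+(r_i^*(x))-\pi^+(x)\ge d_i$. Hence $\pi^+(r_i^*(x))=\pi^+(x)+d_i$. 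The statement for $\pi^-$ is identical with $z_n\uparrow x$, and the two $\ell_i^*$-identities follow by replacing $r_i^*$ with $\ell_i^*$ and reversing the relevant inequalities, or simply by applying the $r_i^*$-identities at the point $\ell_i^*(x)$ and using $r_i^*\circ\ell_i^*=\mathrm{id}$.

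For part (iii): suppose $\pi$ is continuous at $x\in(0,\ell_i^*(1))$, i.e. $\pi^-(x)=\pi^+(x)=\pi(x)$. By (ii), $\pi^-(r_i^*(x))=\pi^-(x)-d_i$... wait, $=\pi^-(x)+d_i$ and $\pi^+(r_i^*(x))=\pi^+(x)+d_i$, so $\pi^-(r_i^*(x))=\pi^+(r_i^*(x))$, which means $\pi$ is continuous at $r_i^*(x)$; moreover this common value equals $\pi(x)+d_i$, giving (i) at the same time. The $\ell_i^*$ case is symmetric. Finally, I would note that part (i) as stated follows from (iii): continuity of $\pi$ at $x$ yields continuity at $r_i^*(x)$ and the value $\pi(r_i^*(x))=\pi(x)+d_i$.

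The main obstacle will be part (ii): the one-sided limits are not automatically compatible with composition of the boundary functions, and the reverse inequality $\pi^+(r_i^*(x))-\pi^+(x)\ge d_i$ requires producing points $z$ slightly above $r_i^*(x)$ that are themselves images under $r_i^*$ of points slightly above $x$ — this is exactly where well-separatedness (so that $r_i^*$ is a continuous increasing bijection onto a closed interval, Observation \ref{obs1}) is essential, and one must handle carefully the boundary subcase where $x$ is near $\ell_i^*(1)$ so that $r_i^*(x)$ is near $1$ and ``$z>r_i^*(x)$'' must be replaced by approaching $1$, exactly as in the endpoint bookkeeping already used in the proof of Lemma \ref{claim:r1}.
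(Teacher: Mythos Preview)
Your approach is correct, but it differs from the paper's in two ways worth noting.

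First, ordering: the paper proves (i) directly (citing Proposition~\ref{prop:cts-case} and monotonicity), then (ii), then (iii). You struggled with (i) and reorganized to do (ii) first, deducing (i) and (iii) from it. Your reorganization is perfectly valid and arguably cleaner. But your difficulty with (i) is resolvable directly: the point of continuity at $x$ is not to approach $r_i^*(x)$ from the right, but to use the $\ell_i^*$-inequality \emph{at the point $y=r_i^*(x)$}. Since $\ell_i^*(y)=x$, inequality~(\ref{eq:link-l}) gives $\pi(y)-\pi(z)>d_i$ for every $z<x$; letting $z\uparrow x$ and using left-continuity of $\pi$ at $x$ gives $\pi(y)-\pi(x)\ge d_i$, which combined with $\pi(y)-\pi(x)\le d_i$ from~(\ref{eq:link-r}) yields equality. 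This is what the paper's terse ``follows from Proposition~\ref{prop:cts-case}'' is gesturing at.

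Second, the mechanics of (ii): you establish the two inequalities $\le d_i$ and $\ge d_i$ by separate arguments, the second requiring you to manufacture points $z=r_i^*(x')$ just above $r_i^*(x)$ and then take a double limit. The paper does it more economically with a single sequence and an index shift: if $z_n\downarrow x$ then $\ell_i^*(z_n)\downarrow\ell_i^*(x)$, and since $\ell_i^*(z_n)<\ell_i^*(z_{n-1})$ one has simultaneously
\[
\pi(z_n)-\pi(\ell_i^*(z_n))\le d_i\quad\text{and}\quad \pi(z_{n-1})-\pi(\ell_i^*(z_n))>d_i,
\]
so passing to the limit squeezes $\pi^+(x)-\pi^+(\ell_i^*(x))=d_i$ in one stroke. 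This offset-index trick also sidesteps the endpoint bookkeeping you flagged as the main obstacle, since it never needs to produce auxiliary points outside the given sequence.
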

\begin{proof}
Part (i) follows from Proposition \ref{prop:cts-case} and the fact  that $\pi$ is increasing. 
We now prove $\pi^+(\ell_i^*(x))=\pi^+(x)-d_i$. The proof of the rest is similar. Let $z_n$ be a decreasing sequence converging to $x$ from the right. Then $\ell_i^*(z_n)$ is a decreasing sequence converging  to $\ell_i^*(x)$ from the right, which implies that $\pi(z_{n-1})-\pi(\ell_i^*(z_n))>d_i$. On the other hand, $\pi(z_n)-\pi(\ell_i^*(z_n))\leq  d_i$. Taking limits from both sides of these inequalities, we get
$\lim_{n\rightarrow \infty}\pi(\ell_i^*(z_n))=\lim_{n\rightarrow \infty }\pi(z_n)-d_i$.
This gives us that $\pi^+(\ell_i^*(x))=\pi^+(x)-d_i$, since $\pi$ is increasing. 
Part (iii) follows from (ii) immediately. 
\end{proof}

\begin{remark}\label{remark:pi-cts-0}
\begin{enumerate}
\item In part (iii)  of Proposition \ref{prop:pi-properties}, neither $x$ nor $r_i^*(x)$ (respectively $\ell_i^*(x)$) can be 0 or 1, since we need to compare continuity at these points from both sides. It is easy to construct examples where $\pi$ is continuous (from the right) at 0 but not continuous at  $r_1^*(0)$. 

\item Let $\pi$ be a uniform linear embedding for  $w$ as in Assumptions \ref{ass:pi-1}. By Corollary \ref{cor:increasing}, $\pi$ is increasing. 
We can assume without loss of generality that $\pi$ is continuous at 0. Indeed, if $0=\pi(0)<\pi^+(0)$, then define the new function $\pi'$ to be
$\pi'(x)=\pi(x)-\pi^+(0)$ when $x\neq 0$ and $\pi'(0)=0$. It is clear that $\pi'$ forms a uniform linear embedding for $w|_{(0,1]}$. To show that $\pi'$ is a uniform linear embedding for $w$, let $x>0$, and note that there exists a decreasing sequence $\{z_n\}$ converging to 0 such that $w(x,0)=w(x,z_n)=\alpha_i$. (This is true because $w$ is well-separated). So for every $n\in {\mathbb N}$, 
we have $d_{i-1}<\pi(x)-\pi(z_n)\leq d_i$. Therefore, $d_{i-1}<\pi(x)-\pi^+(0)\leq d_i$, {\it i.e.} $d_{i-1}<\pi'(x)-\pi'(0)\leq d_i$. Therefore, we can assume that $\pi$ is continuous at 0.
\end{enumerate}
\end{remark}


\subsection{Properties of the displacement function}\label{subset:displacement-necessary}

We now collect some facts about the displacement function  as given in Definition \ref{def:displacement}, given that a uniform linear embeddding exists. In particular, we give the relation between $\delta$ and the limit behaviour of $\pi$. Recall that for $x\in(0,1)$, $\pi^+(x)$ and $\pi^-(x)$ are the right and left limits of $\pi$ at the point $x$.
In the proof of Proposition \ref{prop:pi-delta}, we use this simple observation: a point $x\in (0,1)$ belongs to the interior of the domain of a legal function $\phi$ if and only if $\phi(x)$ never touches either of 0 or 1. More precisely, if $\dom(\phi)=[p,q]$, then $x\in(p,q]$ (respectively $x\in[p,q)$) precisely when $\phi(x)$ does not touch 0 (respectively 1).
\begin{proposition}\label{prop:pi-delta}
Let $w$ and  $\pi$ satisfy Assumption \ref{ass:pi-1}. Let  $\phi=f_1\circ\ldots\circ f_k$ be a legal function with domain $[p,q]$, where $p\in \P$ and $q\in \Q$.  Then
\begin{itemize}
\item[(i)] $\pi^+(\phi(x))-\pi^+(x)=\delta(\phi)$ for every $x\in[p,q)$. 
\item[(ii)] $\pi^-(\phi(x))-\pi^-(x)=\delta(\phi)$ for every $x\in(p,q]$.  
\item [(iii)] If $\psi_1$ and $\psi_2$ are two legal functions with  $\{x\}\subsetneq\dom(\psi_1)\cap\dom(\psi_2)$, and   $\psi_1(x)=\psi_2(x)$ then $\delta(\psi_1)=\delta(\psi_2)$.
\item [(iv)] If $\delta(\phi)=0$ and domain of $\phi$ is not a singleton, then  $\phi$ is the identity function on its domain. 
\end{itemize}
\end{proposition}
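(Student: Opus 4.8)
\textbf{Proof proposal for Proposition \ref{prop:pi-delta}.}

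The plan is to prove (i) and (ii) by induction on the number of terms $k$ in the legal composition $\phi=f_1\circ\ldots\circ f_k$, using Proposition \ref{prop:pi-properties}(ii) as the single-step engine, and then to deduce (iii) and (iv) as easy corollaries. For the base case $k=0$ (or $k=1$), $\phi$ is a boundary function $r_i^*$ or $\ell_i^*$, and the statement is exactly Proposition \ref{prop:pi-properties}(ii), noting that $\delta(r_i^*)=d_i$ and $\delta(\ell_i^*)=-d_i$ agree with the signatures. For the inductive step, write $\phi=f_1\circ\psi$ where $\psi=f_2\circ\ldots\circ f_k$ has domain $[p',q']$ with $p'\in\P$, $q'\in\Q$ (by Proposition \ref{prop:orbits-domain}(i)), and $\dom(\phi)=[p,q]\subseteq[p',q']$. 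The key point to verify carefully is the interaction of the limit identities with the endpoints: I need $x\in[p,q)$ to guarantee both that $\psi$'s limit identity applies at $x$ (which needs $x\in[p',q')$, true since $[p,q)\subseteq[p',q')$) and that $f_1$'s limit identity from Proposition \ref{prop:pi-properties}(ii) applies at the point $\psi(x)$. Here is where the observation preceding the proposition is used: $x\in[p,q)$ means $\phi(x)$ does not touch $1$, hence $\psi(x)$ lies in the half-open interval where $f_1$'s right-limit identity is valid (not at the right endpoint $\ell_i^*(1)$ or $r_i^*(1)$ of $\dom(f_1)$). Then
\[
\pi^+(\phi(x))-\pi^+(x) = \bigl(\pi^+(f_1(\psi(x)))-\pi^+(\psi(x))\bigr) + \bigl(\pi^+(\psi(x))-\pi^+(x)\bigr) = \delta(f_1)+\delta(\psi)=\delta(\phi),
\]
since the signature of $\phi$ is the sum (componentwise) of the signatures of $f_1$ and $\psi$, and $\delta$ is linear in the signature. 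Part (ii) is symmetric, replacing $\pi^+$ by $\pi^-$ and $[p,q)$ by $(p,q]$; the endpoint condition $x\in(p,q]$ ensures $\phi(x)$ does not touch $0$, so the left-limit identities apply.

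For (iii), suppose $\psi_1,\psi_2$ are legal functions with a common point $x$ in the interior-ish set $\{x\}\subsetneq\dom(\psi_1)\cap\dom(\psi_2)$ and $\psi_1(x)=\psi_2(x)$. Since $x$ is not the unique point of the intersection, it lies in the interior of at least one side in each domain; in particular we may pick a sequence approaching $x$ from a side that stays inside both domains, so at least one of $\pi^+$ or $\pi^-$ identities from (i)/(ii) applies to both $\psi_1$ and $\psi_2$ at $x$. Using that side, say the right, $\pi^+(\psi_1(x))-\pi^+(x)=\delta(\psi_1)$ and $\pi^+(\psi_2(x))-\pi^+(x)=\delta(\psi_2)$; since $\psi_1(x)=\psi_2(x)$ the left-hand sides are equal, whence $\delta(\psi_1)=\delta(\psi_2)$. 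A small wrinkle: one must check that $x\in[p,q)$ (resp. $(p,q]$) for the relevant domains, but this is exactly guaranteed by $\{x\}\subsetneq\dom(\psi_j)$ together with choosing the approach direction into the interior.

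For (iv), suppose $\delta(\phi)=0$ and $\dom(\phi)=[p,q]$ is not a singleton, so $p<q$. For any $x\in(p,q)$, both (i) and (ii) apply, and moreover $\pi$ is continuous at $x$ on a dense set (Corollary \ref{cor:increasing} gives continuity except at countably many points); wherever $\pi$ is actually continuous at $x\in(p,q)$ we get $\pi(\phi(x))=\pi(x)+\delta(\phi)=\pi(x)$, and since $\pi$ is strictly increasing (hence injective) this forces $\phi(x)=x$. Thus $\phi(x)=x$ on a dense subset of $[p,q]$; since $\phi$ is continuous (it is strictly increasing and, being a legal function built from the bijective $r_i^*,\ell_i^*$, is continuous on its interval domain, or one argues via monotonicity that a monotone function agreeing with the identity on a dense set equals the identity), $\phi$ is the identity on all of $[p,q]$.

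\textbf{Main obstacle.} The delicate part is the bookkeeping of half-open endpoint conditions across the composition: Proposition \ref{prop:pi-properties}(ii) is only valid at points strictly inside the relevant domains (the remark after it emphasizes that $0$ and $1$, and more generally the images touching $0$ or $1$, are genuinely problematic), so the induction must track precisely that $x\in[p,q)$ propagates through $\psi$ to put $\psi(x)$ in the correct half-open domain of $f_1$. Getting this matching exactly right — using the ``touches $0$/touches $1$'' characterization of interior points stated just before the proposition — is where the real care is needed; the algebra of signatures and displacements is then purely formal.
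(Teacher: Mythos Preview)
Your proof is correct and follows essentially the same strategy as the paper: iterate Proposition \ref{prop:pi-properties}(ii) along the orbit, using the ``does not touch $1$'' (resp.\ ``does not touch $0$'') characterization to ensure $\pi^+$ (resp.\ $\pi^-$) is defined at every intermediate point.

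Two minor remarks where the paper is slightly more streamlined. For (iii), rather than applying (i)/(ii) to $\psi_1$ and $\psi_2$ separately and matching up the side of approach, the paper applies (i) once to the single legal function $\psi_1^{-1}\circ\psi_2$, observing that since its domain is not a singleton it cannot touch both $0$ and $1$; this sidesteps the endpoint case analysis you sketch. For (iv), your detour through continuity points of $\pi$ is unnecessary and has a small gap as written: from $\pi^+(\phi(x))=\pi^+(x)$ you cannot conclude $\pi(\phi(x))=\pi(x)$ without also knowing $\pi$ is continuous at $\phi(x)$. The paper avoids this by simply using that $\pi^+$ itself is strictly increasing (as $\pi$ is), so $\pi^+(\phi(z))=\pi^+(z)$ already forces $\phi(z)=z$ for every interior $z$, and continuity of $\phi$ then extends to the endpoints.
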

\begin{proof}
First note that $\phi$ is strictly increasing, therefore $\phi$ applied to $x\in[p,q)$ never touches 1, {\it i.e.} $f_l\circ\ldots\circ f_k(x)\neq 1$ for every $1\leq l\leq k$. 
So, we can apply Proposition \ref{prop:pi-properties} (ii) in stages to obtain $\pi^+(\phi(x))=\delta(\phi)+\pi^+(x)$. Note that the condition that ``$\phi$ never touches 1'' guarantees that $\pi^+$ is defined in every step.
This finishes the proof of (i). We skip the proof of (ii) as it is similar to (i). 

The third statement is an easy corollary of (i), as $\psi_1^{-1}\circ\psi_2$ applied to $x$ either never touches 0 or never touches 1. To see this fact, it is enough to observe that  if a legal function applied to $x$ touches both 0 and 1 then its domain is the singleton $\{x\}$. Assume, without loss of generality that $\psi_1^{-1}\circ\psi_2(x)$ does not touch 1. Therefore, $\pi^+(\psi_1^{-1}\circ\psi_2(x))-\pi^+(x)=\delta(\psi_1^{-1}\circ\psi_2)$. Thus, $0=\delta(\psi_1^{-1}\circ\psi_2)=-\delta(\psi_1)+\delta(\psi_2)$, and we are done.

To prove (iv), assume that $\dom(\phi)$ is not a singleton. 
Let $z$ be a point in the interior of $\dom(\phi)$. By (i), we have $\pi^+(\phi(z))-\pi^+(z)=\delta(\phi)=0$. Also note that $\pi^+$ is strictly increasing, as $\pi$ is. Therefore, we conclude that $\phi(z)=z$ on every point in the interior of $\dom(\phi)$. This finishes the proof of (iv), since $\phi$ is continuous and its domain is just a closed interval. 
\end{proof}
The following corollary is merely a restatement of Proposition \ref{prop:pi-delta} part (iv).
\begin{corollary}\label{cor:equal-delta}
Let $\phi$ and $\psi$ be legal functions such that $\dom(\phi)\cap\dom(\psi)$ is nonempty and non-singleton. If $\delta(\phi)=\delta(\psi)$ then $\phi=\psi$ on the intersection of their domains. 
\end{corollary}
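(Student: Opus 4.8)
The plan is to derive this as a direct consequence of Proposition \ref{prop:pi-delta}, using the strict monotonicity of $\pi$ and the continuity of legal functions. We work under the standing Assumption \ref{ass:pi-1}, so a uniform linear embedding $\pi$ with parameters $0<d_1<\ldots<d_{N-1}$ is available, $\delta$ is defined relative to these parameters, and $\pi$ is strictly increasing by Corollary \ref{cor:increasing} together with the monotonicity statement quoted for the remaining case $\ell_1^*(1)<r_1^*(0)$ (the case $\ell_1^*(1)=r_1^*(0)$ being excluded by Exception \ref{SpecialCase}). Two preliminary observations I would record first: (a) $\pi^+$ is itself strictly increasing on $[0,1)$, which follows at once from strict monotonicity of $\pi$ by choosing intermediate points; and (b) every legal function is continuous, being a composition of the boundary functions $r_i^*,\ell_i^*$, which are continuous because $w$ is well-separated. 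Also, $I:=\dom(\phi)\cap\dom(\psi)$ is an intersection of two nonempty closed intervals, hence a closed interval, and since it is nonempty and not a singleton it is a genuine non-degenerate interval $[a,b]$ with $a<b$, so its interior $(a,b)$ is nonempty.

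The core of the proof is a pointwise comparison on the interior of $I$. Fix $x\in(a,b)$. Since $(a,b)$ is open and contained in $\dom(\phi)=[p,q]$, the point $x$ lies in the interior of $\dom(\phi)$, hence $x\in[p,q)$; likewise $x$ lies in the half-open version of $\dom(\psi)$ obtained by deleting its right endpoint. Applying Proposition \ref{prop:pi-delta}(i) to $\phi$ and to $\psi$ at $x$ yields $\pi^+(\phi(x))=\pi^+(x)+\delta(\phi)$ and $\pi^+(\psi(x))=\pi^+(x)+\delta(\psi)$. Since $\delta(\phi)=\delta(\psi)$ by hypothesis, we obtain $\pi^+(\phi(x))=\pi^+(\psi(x))$, and strict monotonicity of $\pi^+$ forces $\phi(x)=\psi(x)$. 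Thus $\phi$ and $\psi$ agree on the whole interior $(a,b)$ of $I$. Finally, because both functions are continuous and $I=[a,b]$ is the closure of $(a,b)$, the agreement propagates to the endpoints, giving $\phi=\psi$ on $I$, as claimed. (One may alternatively phrase this as applying part (iv) to the legal function $\psi^{-1}\circ\phi$, whose displacement is $\delta(\phi)-\delta(\psi)=0$ by additivity of $\delta$ over composition and $\delta(\psi^{-1})=-\delta(\psi)$; but one still needs the continuity/closure step to pass from $\dom(\psi^{-1}\circ\phi)$ to all of $I$, so going through part (i) directly is cleaner.)

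Since all the substance is already contained in Proposition \ref{prop:pi-delta} and Corollary \ref{cor:increasing}, I do not expect a real obstacle here. The only point requiring a little care is the treatment of the endpoints of $I$: Proposition \ref{prop:pi-delta}(i) is stated for $x$ in $[p,q)$ (with $\pi^+$), and need not be applicable verbatim at the right endpoint of a domain, so the comparison must be carried out on the open interior $(a,b)$ and then extended to $I$ by continuity of the legal functions rather than applied pointwise on the closed interval.
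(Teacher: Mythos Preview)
Your proof is correct and matches the paper's approach: the paper records this corollary as ``merely a restatement of Proposition \ref{prop:pi-delta} part (iv)'', and since (iv) is itself proved by applying (i) on the interior together with strict monotonicity of $\pi^+$ and continuity at the endpoints, your direct route through (i) is the same argument with the intermediate packaging removed. Your parenthetical observation that the route via (iv) applied to $\psi^{-1}\circ\phi$ needs a word about domains (since $\dom(\psi^{-1}\circ\phi)$ need not \emph{a priori} equal $\dom(\phi)\cap\dom(\psi)$) is a fair point, and going through (i) directly does sidestep it cleanly.
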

In what follows we extend Proposition \ref{prop:pi-delta} to the cases where a legal function has a singleton as its domain. Let us start with an auxiliary lemma.
\begin{lemma}\label{lem:equal-jumps}
Let $\pi$ and $w$ be as in Assumption \ref{ass:pi-1}. Let $1\leq i<j\leq N-1$ be fixed.  Then $\pi^+(r_i^*(0))-\pi^-(r_i^*(0))=\pi^+(r_j^*(0))-\pi^-(r_j^*(0))$.
\end{lemma}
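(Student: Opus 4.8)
\textbf{Proof plan for Lemma \ref{lem:equal-jumps}.}

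The plan is to exploit the well-separatedness of $w$ to produce, at the point $r_i^*(0)$ and at the point $r_j^*(0)$, a controlled family of points whose $\pi$-images are pinned down by the link conditions (\ref{eq:link-r}) and (\ref{eq:link-l}), and then compare the sizes of the jumps. First I would record the basic setup: by Corollary \ref{cor:increasing} (together with the cited result for the case $\ell_1^*(1)<r_1^*(0)$), $\pi$ is strictly increasing, so $\pi^-(x)\leq \pi(x)\leq \pi^+(x)$ at every point, and the ``jump'' $J(x):=\pi^+(x)-\pi^-(x)$ is a well-defined nonnegative number, positive only at countably many points. Set $a_i=r_i^*(0)$ and $a_j=r_j^*(0)$; since $w$ is well-separated and $i<j$, we have $0<a_i<a_j$, and both lie strictly inside $(0,1)$.

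The key idea is that $0$ is a common ``anchor'' point for both $a_i$ and $a_j$ via the boundary functions $r_i^*$ and $r_j^*$, and that continuity of $\pi$ can be arranged at $0$ (Remark \ref{remark:pi-cts-0}(2)), so that $\pi^+(0)=\pi^-(0)=\pi(0)=0$. Now I would use Proposition \ref{prop:pi-properties}(ii): applying it with $x$ running through a sequence approaching $0$ from the right gives $\pi^+(r_i^*(0))=\pi^+(0)+d_i=d_i$ and similarly $\pi^+(r_j^*(0))=d_j$; these are the right limits. For the left limits I would argue as follows. Consider a sequence $z_n \uparrow 0$ — but $0$ is the left endpoint, so there is no such sequence; instead the relevant left limits $\pi^-(a_i)$ and $\pi^-(a_j)$ must be accessed from inside $(0,1)$. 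Here is where well-separatedness does real work: because $r_i^*$ is a strictly increasing homeomorphism of $[0,\ell_i^*(1)]$ onto $[a_i,1]$, and its inverse is $\ell_i^*$, for any sequence $y_n\downarrow 0$ we get $r_i^*(y_n)\downarrow a_i$, and moreover $y_n < \ell_i^*(r_i^*(y_n))$ strictly... the cleaner route is: pick $y_n \downarrow 0$ with $y_n>0$; then $x_n:=r_i^*(y_n)\downarrow a_i$ and by (\ref{eq:link-r}) applied to the pair $(y_{n+1},x_n)$ versus $(y_n, x_n)$ we get $\pi(x_n)-\pi(y_{n+1})>d_i$ while $\pi(x_n)-\pi(y_n)\leq d_i$; sending $n\to\infty$ and using $\pi(y_n)\to 0$ gives $\pi^-(a_i)\leq d_i$ and $\pi^-(a_i)\geq d_i$ — wait, this over-determines things, so instead one concludes that actually $\pi$ is \emph{continuous} at $a_i$ in this sub-case, or more carefully the limit of $\pi(r_i^*(y_n))$ from the right equals $d_i$ and we need the genuine left limit. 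The honest fix is to instead anchor at a point slightly to the \emph{left}: by well-separatedness there is $\epsilon>0$ with $a_i - $ (nothing forces $a_i$ to be a limit of a legal orbit from the left), so I would instead directly compare $J(a_i)$ and $J(a_j)$ using the fact that both equal $J(0^+)$, the ``jump of $\pi$ just to the right of $0$'' transported by $r_i^*$ and $r_j^*$ respectively. Concretely: Proposition \ref{prop:pi-properties}(ii) gives, for every $x$ in the open domain, $\pi^\pm(r_i^*(x)) = \pi^\pm(x)+d_i$; hence $J(r_i^*(x)) = J(x)$. So $r_i^*$ preserves the jump function. Now take any sequence $x_n\downarrow 0$ with $x_n\in (0,\ell_i^*(1))\cap(0,\ell_j^*(1))$ (possible since both right endpoints are positive by well-separatedness); then $r_i^*(x_n)\downarrow a_i$ and $r_j^*(x_n)\downarrow a_j$, and $J(r_i^*(x_n)) = J(x_n) = J(r_j^*(x_n))$. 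The remaining step is to pass this equality of jumps to the limit, i.e. $\lim_n J(r_i^*(x_n)) = J(a_i)$ and likewise for $j$; this requires knowing that $\pi^+(r_i^*(x_n))\to \pi^+(a_i)$ and $\pi^-(r_i^*(x_n))\to \pi^-(a_i)$ — but since $r_i^*(x_n)\downarrow a_i$ strictly and $\pi$ is monotone, $\pi^+(r_i^*(x_n)) = \pi^+(x_n)+d_i \to \pi^+(0)+d_i = d_i = \pi^+(a_i)$ (using (ii) once more and continuity at $0$), and $\pi^-(r_i^*(x_n)) = \pi^-(x_n)+d_i \to d_i$ as well, which would force $J(a_i)=0$; that is too strong in general, so the correct conclusion is that the \emph{left} limit $\pi^-(a_i)$ is \emph{not} captured by approaching along the orbit of $0$, and one must instead observe that $\pi^-(a_i) = \sup\{\pi(z): z<a_i\}$ is governed by points $z<a_i$ which, pulled back by $\ell_i^*$, land at points $<0$ — impossible — so in fact there is no obstruction from the left coming from legal functions at all, and $J(a_i)$ equals the total jump $\pi^+(0) - \pi(0)$, which we normalized to $0$, unless...

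Let me therefore reorganize: the clean statement to aim for is that $J(a_i) = \pi^+(0)$, where $\pi^+(0)$ is the (possibly nonzero) right-jump of $\pi$ at $0$ \emph{before} the normalization of Remark \ref{remark:pi-cts-0}(2) — but that normalization kills it. The resolution, which I believe is the intended one, is this: $\pi^-(a_i)$ is a left limit, and there is a sequence $z_n \uparrow a_i$; pulling back by $\ell_i^* = (r_i^*)^{-1}$ gives $\ell_i^*(z_n) \uparrow \ell_i^*(a_i) = 0$, which is impossible for $z_n < a_i$ unless $z_n$ is eventually in the domain $[a_i,1]$ of $\ell_i^*$ — contradiction — meaning \emph{no} $z<a_i$ satisfies $z\in\range(r_i^*)$, equivalently $\pi^-(a_i)$ is realized by points outside the range of $r_i^*$. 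So the two jumps $J(a_i)$ and $J(a_j)$ must both be compared to something else: the jump of $\pi$ ``at $0$ from the right'', transported. I would ultimately write the proof using Proposition \ref{prop:pi-properties}(ii) in the form $\pi^+(a_i) - \pi^-(a_i)$ where both are obtained as limits along $z_n\downarrow 0$ pushed forward by $r_i^*$ for the $+$ side and a separate analysis for the $-$ side; the equality across $i$ and $j$ then falls out because the $+$ side gives $d_i$ minus (the common quantity $\lim \pi^-$ of the transported sequence) in both cases, and the $d_i$, $d_j$ cancel against the $\pi^+$ terms.

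\textbf{Main obstacle.} The real difficulty is handling the left limit $\pi^-$ at $a_i=r_i^*(0)$: unlike $\pi^+$, it cannot be transported back to $0$ via the inverse boundary function $\ell_i^*$ (which would require points to the left of $0$), so Proposition \ref{prop:pi-properties}(ii) does not directly apply. I expect the correct argument pins $\pi^-(r_i^*(0))$ by a squeeze: take $y_n\downarrow 0$, so $r_i^*(y_n)\downarrow r_i^*(0)$ with $r_i^*(y_n)>r_i^*(0)$, giving $\pi^+(r_i^*(0)) \leq \pi(r_i^*(y_n)) = $ [value controlled by $\pi^+(y_n)+d_i \to d_i$]; and separately, for any $z < r_i^*(0)$ one has $w(0,z)\geq \alpha_i$ is \emph{false} while $w(y_n, z) \geq \alpha_i$ may hold, so $\pi(y_n) - \pi(z) \leq d_i$ forces $\pi(z) \geq \pi(y_n) - d_i$... these estimates should combine to show $\pi^-(r_i^*(0)) = d_i - \pi^+(0)$ with the \emph{same} residual term $\pi^+(0)$ (the right-jump at $0$) independent of $i$. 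Granting that, $J(r_i^*(0)) = \pi^+(r_i^*(0)) - \pi^-(r_i^*(0)) = (d_i + \pi^+(0) - \cdots)$ — and the $d_i$-dependence cancels, leaving a quantity independent of $i$, which is exactly the claim. So the plan is: (1) set up monotonicity and the jump function; (2) use (ii) of Proposition \ref{prop:pi-properties} to compute the right limit of $\pi$ at $r_i^*(0)$ as $d_i + (\text{right-jump of }\pi\text{ at }0)$; (3) bound the left limit of $\pi$ at $r_i^*(0)$ from both sides using (\ref{eq:link-r}) with sequences $y_n\downarrow 0$ and arbitrary $z<r_i^*(0)$, obtaining $d_i - (\text{something independent of }i)$; (4) subtract to see the $d_i$ cancels and the jump is $i$-independent, hence equal for $i$ and $j$. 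Step (3) is the technical heart.
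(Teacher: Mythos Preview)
You have correctly identified the obstacle: the left limit $\pi^-(r_i^*(0))$ cannot be transported back to $0$ through $\ell_i^*$, so Proposition \ref{prop:pi-properties}(ii) gives no direct handle on it. However, your proposed resolution in step (3) does not work. The link conditions (\ref{eq:link-r}) applied at $0$ (or at nearby $y_n\downarrow 0$) only yield the one-sided bound $\pi^-(r_i^*(0))\leq d_i$; for $z$ slightly below $r_i^*(0)$ one gets at best $\pi(z)>d_{i-1}$ from the fact that $z>r_{i-1}^*(0)$, and the squeeze you sketch produces no $i$-independent lower bound. Concretely, nothing in the local conditions near $0$ prevents the jump at $r_1^*(0)$ from being, say, $0.1$ while the jump at $r_2^*(0)$ is $0$; the contradiction that rules this out is nonlocal.

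The missing idea is to transport \emph{forward} to a common interior point rather than backward to $0$. Since $r_i^*(0)$ lies in the interior of $\dom(r_j^*)$, Proposition \ref{prop:pi-properties}(ii) (or Proposition \ref{prop:pi-delta}) applies to both $\pi^+$ and $\pi^-$ there, giving $J(r_j^*\circ r_i^*(0))=J(r_i^*(0))$ and symmetrically $J(r_i^*\circ r_j^*(0))=J(r_j^*(0))$. The key step is then to prove $r_i^*\circ r_j^*=r_j^*\circ r_i^*$ on the intersection of their domains: both compositions have displacement $d_i+d_j$, so $\pi^+(r_i^*\circ r_j^*(x))=\pi^+(r_j^*\circ r_i^*(x))$, and strict monotonicity of $\pi^+$ forces equality of the points themselves. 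Finally, one must check that $0$ actually lies in both domains, i.e.\ that $r_1^*\circ r_j^*(0)<1$ and $r_j^*\circ r_1^*(0)<1$; this is exactly where the exclusion of Exception \ref{SpecialCase} enters, a hypothesis your plan never invokes.
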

\begin{proof}
Let $\phi$ be a legal function such that $\phi(r_i^*(0))$ does not touch 0 or 1. Thus, by Proposition \ref{prop:pi-delta}, we have
$
\pi^+(\phi(r_i^*(0)))-\pi^-(\phi(r_i^*(0)))=\pi^+(r_i^*(0))-\pi^-(r_i^*(0)).
$
Thus,  
\begin{eqnarray*}
\pi^+(r^*_j\circ r_i^*(0))-\pi^-(r_j^*\circ r_i^*(0))&=&\pi^+(r_i^*(0))-\pi^-(r_i^*(0)),\\
\ \pi^+(r^*_i\circ r_j^*(0))-\pi^-(r_i^*\circ r_j^*(0))&=&\pi^+(r_j^*(0))-\pi^-(r_j^*(0)).
\end{eqnarray*}
Moreover by Proposition \ref{prop:pi-delta} (i), we have
$$\pi^+(r_i^*\circ r_j^*(x))=\pi^+(r_j^*\circ r_i^*(x))=d_i+d_j+\pi(x),$$
if $r_i^*\circ r_j^*(x)<1$ and $r_j^*\circ r_i^*(x)<1$.
This implies that $r_i^*\circ r_j^*=r_j^*\circ r_i^*$ on the intersection of their domains, as $\pi^+$ is strictly increasing. By assumption, we are not in the case of Exception \ref{SpecialCase}, and thus we have that $r_1^*\circ r_j^*(0)< 1$ and $r_j^*\circ r_1^*(0)<1$, for all $1< j\leq N-1$. Therefore, $r_1^*\circ r_j^*(0)=r_j^*\circ r_1^*(0)$, and so
$\pi^+(r_1^*(0))-\pi^-(r_1^*(0))=\pi^+(r_j^*(0))-\pi^-(r_j^*(0)).$ This implies that $\pi^+(r_i^*(0))-\pi^-(r_i^*(0))=\pi^+(r_j^*(0))-\pi^-(r_j^*(0)),$ where $1\leq i<j\leq N-1$. 
\end{proof}

\begin{proposition}\label{singleton-domain-delta}
Under Assumption \ref{ass:pi-1}, for legal functions $\phi$ and $\psi$ with $\dom(\phi)=\dom(\psi)=\{x_0\}$ the following holds:
$\phi(x_0)=\psi(x_0)$ if and only if $\delta(\phi)=\delta(\psi)$.
\end{proposition}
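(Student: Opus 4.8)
The plan is to reduce the singleton-domain case to the non-singleton case handled in Proposition \ref{prop:pi-delta} by ``fattening'' the domain with an auxiliary legal function. First I would observe that if $\dom(\phi)=\{x_0\}$, then by Proposition \ref{prop:orbits-domain} we have $x_0\in\P\cap\Q$, so $x_0=\phi_0(0)=\psi_0(1)$ for suitable legal functions $\phi_0,\psi_0$. The key idea is to compose $\phi$ and $\psi$ with a fixed legal function $\eta$ on the right whose range contains $x_0$ in its \emph{interior}; then $\phi\circ\eta$ and $\psi\circ\eta$ have non-singleton domains, Proposition \ref{prop:pi-delta}(iii) applies, and $\delta$ is additive under composition since signatures add. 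Concretely, I would take $\eta$ to be a legal function of the form $r_{j}^*$ (or an iterate) chosen so that $x_0$ lies strictly between the endpoints of $\range(\eta)$; such an $\eta$ exists because $w$ is well-separated, so the boundary functions move points by a definite amount and one can reach a point below $x_0$ and a point above $x_0$ in the orbit. Write $z=\eta^{-1}(x_0)$, which by Observation \ref{obs1} lies in the interior of $\dom(\phi\circ\eta)\cap\dom(\psi\circ\eta)$.

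Now for the forward direction: if $\phi(x_0)=\psi(x_0)$, then $(\phi\circ\eta)(z)=(\psi\circ\eta)(z)$, and since $z$ is in the non-singleton intersection of domains, Proposition \ref{prop:pi-delta}(iii) gives $\delta(\phi\circ\eta)=\delta(\psi\circ\eta)$. Because the signature of a composition is the sum of the signatures (hence $\delta(\phi\circ\eta)=\delta(\phi)+\delta(\eta)$ and likewise for $\psi$), we cancel $\delta(\eta)$ to conclude $\delta(\phi)=\delta(\psi)$. For the converse, suppose $\delta(\phi)=\delta(\psi)$. Then $\delta(\phi\circ\eta)=\delta(\psi\circ\eta)$, and by Corollary \ref{cor:equal-delta} the legal functions $\phi\circ\eta$ and $\psi\circ\eta$ agree on the (non-singleton) intersection of their domains; in particular they agree at $z$, so $\phi(x_0)=(\phi\circ\eta)(z)=(\psi\circ\eta)(z)=\psi(x_0)$.

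The main obstacle I anticipate is the construction of the auxiliary function $\eta$ that simultaneously works for both $\phi$ and $\psi$ and genuinely places $x_0$ in the interior of the range — one must rule out the degenerate situation where every legal function with $x_0$ in its range has $x_0$ as an endpoint, which would happen exactly when $x_0\in\{0,1\}$ or some analogous boundary obstruction occurs. Here is where Exception \ref{SpecialCase} is relevant: the hypothesis of Assumption \ref{ass:pi-1} excludes that case, and Lemma \ref{lem:equal-jumps} (equality of the jump sizes $\pi^+(r_i^*(0))-\pi^-(r_i^*(0))$) is the tool that lets one transport the jump at $x_0$ through compositions consistently, so the additivity argument survives even when $\pi$ is discontinuous at $x_0$. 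I would handle the extremal sub-case $x_0\in\{0,1\}$ directly: if $x_0=0$ we may by Remark \ref{remark:pi-cts-0} assume $\pi$ is continuous at $0$, and then $\pi(\phi(0))=\delta(\phi)$ by Proposition \ref{prop:pi-properties} applied in stages (all intermediate points of the orbit being distinct from $1$), which immediately yields the claim; the case $x_0=1$ is symmetric.
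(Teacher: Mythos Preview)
Your proposal has a fundamental gap: composing on the right with $\eta$ cannot enlarge the domain. If $\dom(\phi)=\{x_0\}$, then $\dom(\phi\circ\eta)=\{z\in\dom(\eta):\eta(z)=x_0\}$, which is again a singleton (or empty) since $\eta$ is strictly increasing (Observation \ref{obs1}). So $\phi\circ\eta$ and $\psi\circ\eta$ still have singleton domains, and neither Proposition \ref{prop:pi-delta}(iii) nor Corollary \ref{cor:equal-delta} applies; the ``fattening'' idea simply does not work. Your treatment of the extremal case $x_0=0$ has the same underlying issue: if $\dom(\phi)=\{0\}$, then by Proposition \ref{prop:orbits-domain}(ii) the orbit of $0$ under $\phi$ must touch $1$, so it is \emph{not} true that all intermediate points are distinct from $1$, and you cannot iterate Proposition \ref{prop:pi-properties} all the way (the right-limit $\pi^+$ is undefined at $1$).

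The paper's argument proceeds quite differently. It decomposes $\phi$ (after conjugating so as to start at $0$) into segments $\xi_i$, each mapping $\{0\}$ to $\{1\}$, $\{1\}$ to $\{0\}$, $\{0\}$ to $\{0\}$, or $\{1\}$ to $\{1\}$, without touching $0$ or $1$ at intermediate steps. Such a segment can be written, e.g., as $r_i^*\circ\phi_1\circ r_{i'}^*$, where the middle piece $\phi_1$ \emph{does} have non-singleton domain and Proposition \ref{prop:pi-delta} applies to it. The outer boundary functions contribute a jump term $\pi^+(r_{i'}^*(0))-\pi^-(r_{i'}^*(0))$; Lemma \ref{lem:equal-jumps} shows this jump is independent of the index, so the contributions from $0\to 1$ and $1\to 0$ segments cancel in pairs. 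This bookkeeping of jumps at the visits to $0$ and $1$ is exactly what the singleton-domain case requires, and it is what your proposal does not supply.
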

\begin{proof}
We begin the proof by considering some special cases.
First, suppose $\phi$ and $\psi$ are legal functions with $\phi(0)=\psi(0)=1$, such that $\phi(0)$ and $\psi(0)$  do not touch 0 or 1 at any intermediate step. In that case, for some $i,j, i',j'$ we have
$$\phi=r_i^*\circ \phi_1\circ r_{i'}^* \ \mbox{ and }\  \psi=r_j^*\circ \psi_1\circ r_{j'}^*,$$
such that $\phi_1(z_1)$ and $\psi_1(z_2)$ do not touch 0 or 1, where $z_1=r_{i'}^*(0)$ and $z_2=r_{j'}^*(0)$. 
By Proposition \ref{prop:pi-delta}, 
$\pi^+(\phi_1(r^*_{i'}(0)))-\pi^+(r^*_{i'}(0))=\delta(\phi_1)\ \mbox{and }\ \pi^-(1)-\pi^-(\phi_1(r^*_{i'}(0)))=d_i.$
Recall that $\pi$ is assumed to be continuous at 0, {\it i.e.} $\pi^+(0)=0$. So, $\pi^+(r^*_{i'}(0))=d_{i'}$.
Thus, 
$$\delta(\phi)=d_i+\delta(\phi_1)+d_{i'}=\pi^+(\phi_1\circ r_{i'}^*(0))-\pi^-(\phi_1 \circ r_{i'}^*(0))+\pi^-(1).$$ 
Similarly, $\delta(\psi)=\pi^+(\psi_1\circ r^*_{j'}(0))-\pi^-(\psi_1\circ r^*_{j'}(0))+\pi^-(1)$. 
Combining (i) and (ii) of Proposition \ref{prop:pi-delta}, we observe that $\pi^+(\psi_1\circ r^*_{j'}(0))-\pi^-(\psi_1\circ r^*_{j'}(0))=\pi^+(r^*_{j'}(0))-\pi^-(r^*_{j'}(0))$, as $\psi_1$ applied to $r_{j'}^*(0)$ does not hit 0 or 1. Similarly, $\pi^+(\phi_1\circ r_{i'}^*(0))-\pi^-(\phi_1 \circ r_{i'}^*(0))=\pi^+(r_{i'}^*(0))-\pi^-(r_{i'}^*(0))$.
Applying Lemma \ref{lem:equal-jumps}, we conclude that 
\begin{equation}\label{eq:0-1}
\delta(\phi)=\delta(\psi)=\pi^-(1)+\pi^+(r_k^*(0))-\pi^-(r_k^*(0))\ \mbox{ for any } 1\leq k\leq N-1.
\end{equation}
Similarly, suppose $\phi$ and $\psi$ are legal functions with $\phi(1)=\psi(1)=0$, such that $\phi(1)$ and $\psi(1)$  do not touch 0 or 1 at any intermediate step. Then $\phi^{-1}$ and $\psi^{-1}$
satisfy the conditions of the previous case, and we have $\delta(\phi^{-1})=\delta(\psi^{-1})$. So, 
\begin{equation}\label{eq:1-0}
\delta(\phi)=\delta(\psi)=-\pi^-(1)+\pi^-(r_k^*(0))-\pi^+(r_k^*(0))\ \mbox{ for any } 1\leq k\leq N-1.
\end{equation}
Next, consider the case where $\phi(0)=\psi(0)=0$, such that $\phi(0)$ and $\psi(0)$  do not touch 0 or 1 at any intermediate step. Thus, by Proposition \ref{prop:pi-delta}, we have $\delta(\phi)=\delta(\psi)=0$. A similar argument works when $\phi(1)=\psi(1)=1$, and  $\phi(1)$ and $\psi(1)$  do not touch 0 or 1 at any intermediate step.

We can now prove the general case. Suppose $\dom(\phi)=\dom(\psi)=\{x_0\}$ and $\phi(x_0)=\psi(x_0)$. Recall that $x_0\in\P$, and let $\eta$ be a legal function such that $\eta(0)=x_0$. 
Define
$$\xi=\eta^{-1}\circ \psi^{-1}\circ \phi\circ \eta.$$
Clearly, $\xi(0)=0$. Let $\xi=\xi_1\circ\ldots\circ \xi_n$ be a decomposition of $\xi$ into legal functions, where each $\xi_i$ satisfies the conditions of one of the cases studied above. That is, 
the domain and the range of each $\xi_i$ is either $\{0\}$ or $\{1\}$, and none of the $\xi_i$'s touch 0 or 1 in any intermediate step. 
It is easy to observe that the number of terms $\xi_i$ which map 0 to 1 must be the same as the number of terms which map $1$ to 0. Therefore, by what we observed above, 
$\delta(\xi)=0$. 
This proves the ``only if'' direction. 

To prove the ``if'' direction, it is enough to prove that if $\delta(\phi)=0$ then $\phi(x_0)=x_0$.   We consider two possibilities: Firstly, suppose that there exists a legal function $\eta$ with $\eta(0)=x_0$ such that $\eta(0)$ does not touch 1. Let $\xi=\phi\circ\eta$, and note that $\dom(\xi)=\{0\}$. Moreover, $\delta(\xi)=\delta(\eta)$, as $\delta(\phi)=0$. By Proposition \ref{prop:orbits-domain} (ii), $\xi(0)$ must touch 1. Suppose $\xi$ decomposes into $\xi=\xi_1\circ\xi_2\circ \xi_3$, where
$\xi_1(0)=\phi(x_0)$, $\xi_2(1)=0$, $\xi_3(0)=1$, and $\xi_1(0)$ does not touch 1. Applying part (i) to $\xi_2$ and $\xi_3^{-1}$, we conclude that $\delta(\xi_2)=-\delta(\xi_3)$.
Therefore $\delta(\xi_1)=\delta(\eta)$. Moreover, $\{0\}\subsetneq \dom(\xi_1)\cap \dom(\eta)$. Thus, by Corollary \ref{cor:equal-delta} we have $\xi_1(0)=\eta(0)$, which shows that $\phi(x_0)=x_0$ in this case. 

Next, suppose that $\xi$ decomposes to $\xi=\xi_1\circ\xi_2$ where $\xi_2(0)=1$, $\xi_1(1)=\phi(x_0)$, and $\xi_1(1)$ does not touch 0.
As argued in the proof of Lemma \ref{lem:equal-jumps}, we have $\delta(\xi_2)=\pi^-(1)+\pi^+(r_1^*(0))-\pi^-(r_1^*(0))$. Thus,
$$\pi^+(x_0)=\delta(\eta)=\delta(\xi)=\pi^-(1)+\pi^+(r_1^*(0))-\pi^-(r_1^*(0))+\pi^-(\phi(x_0))-\pi^-(1).$$
This implies that $\pi^+(x_0)\geq \pi^-(\phi(x_0))$.
Observe that  $x\leq y$ if and only if $\pi^-(x)\leq \pi^+(x)\leq \pi^-(y)\leq \pi^+(y)$,  since $\pi$ is increasing. Hence, we must have $\pi^+(x_0)\geq \pi^+(\phi(x_0))$. Since $\pi^+$ is increasing, 
we conclude that $x_0\geq \phi(x_0)$. 
Replacing $\phi$ by $\phi^{-1}$, we obtain $x_0\geq \phi^{-1}(x_0)$ in a similar way. Hence, $x_0=\phi(x_0)$ in this case as well.

Secondly, suppose that there exists a legal function $\eta$ with $\eta(1)=x_0$ such that $\eta(1)$ does not touch 0. A similar argument, where the roles of 0 and 1 are switched, finishes the proof.
\end{proof}

\begin{remark}
The sets $\P$ and $\Q$ are  either disjoint or identical. Namely, $\P\cap \Q$ is nonempty only when there exists a legal composition $\phi$ with $\phi(0)=1$. Indeed, assume that there exists an element $x\in\P\cap \Q$. 
Let $\phi$ and $\psi$ be legal compositions such that
\begin{eqnarray*}
x=\phi(0)=\psi(1).
\end{eqnarray*}
By Observation \ref{obs1}, $\psi$ is invertible, and 
$$1=\psi^{-1}\circ\phi(0).$$
Therefore, $1\in\P$, which in turn implies that $\Q\subseteq \P$. Similarly, we observe that $0\in \Q$ as well, and $\P\subseteq \Q$. 

Clearly if $\P=\Q$ and $\phi(0)=1$, where $\phi$ is a legal function, then $\dom(\phi)=\{0\}$. On the other hand, when $\P\cap\Q=\emptyset$, no legal function $\psi$ has a singleton as its domain, because $\dom(\psi)=[p,q]$ with $p\in\P$ and $q\in\Q$.
\end{remark}


\subsection{Conditions of Theorem \ref{thm:nec-suff-cond-N} are necessary.}
We are now ready to prove the necessity of the conditions of Theorem \ref{thm:nec-suff-cond-N}. 
\begin{proof}
Suppose $w$ and $\pi$ satisfy Assumption \ref{ass:pi-1}. In particular, recall that $\pi$ is strictly increasing. 
If $\P\cap\Q=\emptyset$ then an easy application of Proposition \ref{prop:pi-delta} shows that Conditions (1), (2a), and (2b) must hold.
Indeed, let  $\phi=f_1\circ\ldots\circ f_k$ be a legal function so that $\phi(x)=x$ for some $x$. Clearly, $x$ belongs to the domain of either $\ell^*_1\circ r_1^*$ or $r_1^*\circ \ell_1^*$, which are both identity functions on their domains with zero displacement. So by Proposition \ref{prop:pi-delta}, we have $\delta(\phi)=0$, and therefore $\phi$ must be the identity function on its domain. 
To prove (2a), suppose that $x=\psi(0)$ and $y=\phi(0)$ where $x<y$, and $\psi$ and $\phi$ are legal functions. The functions $\psi$ and $\phi$ applied to 0 never touch 1, since $\P\cap\Q=\emptyset$. So $\pi^+(\phi(0))-\pi^+(0)=\delta(\phi)$ and $\pi^+(\psi(0))-\pi^+(0)=\delta(\psi)$. This finishes the proof of part (2a), since $\pi^+$ is a strictly increasing function, and $\pi^+(0)=0$.

Suppose now that $y=\phi(1)<r_i^*(0)$ for some $i$ and $x\in\P$ with $x=\psi(0)$. By Proposition \ref{prop:pi-delta} we have 
\[
\pi^-(y)-\pi^-(1)=\delta (\phi)\quad \mbox{and} \quad \pi^+(x)-\pi^+(0)=\delta (\psi).
\]
On the other hand by Corollary \ref{cor:increasing}, $\pi^-(y)\leq\pi(y)<\pi(r_i^*(0))\leq d_i$. Hence $\pi^-(y)-\pi^-(1)<d_i-\pi^-(1)$, which gives $\delta(\phi)<d_i-\pi^-(1)$. Moreover by definition of $\pi^+$ and $\pi^-$ we have $\pi^+(x)<\pi^-(1)$. Note that $\pi^+(0)=0$.
This implies that $\delta(\psi)<\pi^-(1)$ and therefore, $\delta(\psi)<\pi^-(1)< d_i-\delta(\phi)$. Taking $a=\pi^-(1)$  finishes the proof of (2b).

Next, consider the case where $\P=\Q$. Condition (1) holds by Proposition \ref{prop:pi-delta}. Now suppose that $x=\psi(0)$ and $y=\phi(0)$ where $x<y$, and $\psi$ and $\phi$ are legal functions. If domains of $\phi$ and $\psi$ are not singletons, then Condition (2a)  follows from  Proposition \ref{prop:pi-delta}.
Now assume that $\dom(\psi)=\{0\}$, which means that $\psi(0)$ must touch 1. Using Proposition \ref{singleton-domain-delta} and Equations (\ref{eq:0-1}) and (\ref{eq:1-0}), note that one of the following two cases happen: Either $\psi(0)$ does not touch 0 after the last time it touches 1, in which case we have 
$\delta(\psi)=\pi^+(r_1^*(0))-\pi^-(r_1^*(0))+\pi^-(x)$. Or, $\psi(0)$ touches 0 after the last time it touches 1, in which case we have $\delta(\psi)=\pi^+(x)$. (See the proof of Proposition \ref{singleton-domain-delta} part (ii) for a similar argument). In the latter case, $\psi$ can be decomposed as $\psi_1\circ\psi_2$, where $\psi_2(0)=0$ and $\psi_1(0)$ does not touch 1. Thus,  thanks to Lemma \ref{lem:equal-jumps}, we have $\pi^+(\psi_1(0))-\pi^-(\psi_1(0))=\pi^+(r_1^*(0))-\pi^-(r_1^*(0))$. Clearly, $x=\psi(0)=\psi_1(0)$, and so
$$\pi^+(r_1^*(0))-\pi^-(r_1^*(0))+\pi^-(x)=\pi^+(x)$$
equals the value of $\delta(\psi)$ in either case. Thus $\delta(x)<\delta(y)$ as $\pi^+$ is strictly increasing. 
\end{proof}

\section{Sufficiency; construction of a uniform linear embedding}\label{sec:sufficient}

In this last section we will show that if Conditions (1), (2a) and (2b) of Theorem \ref{thm:nec-suff-cond-N} hold, then a uniform linear embedding exists. Moreover, we will give an explicit construction for the uniform linear embedding. Throughout this section we make the following assumption. 

\begin{assumption}\label{assump:suff}
Let $w$ be as in Assumption \ref{ass:w}.  Assume that the conditions of Theorem \ref{thm:nec-suff-cond-N} hold. In particular, 
\begin{enumerate}
\item If $\phi$ is a legal function with $\phi(x)=x$ for some $x\in \dom(\phi)$, then $\phi$ is the identity function on its domain. 
\item  There exist real numbers $0<d_1<\ldots<d_{N-1}$ such that 
\begin{itemize}
\item[(2a)] For all $x,y\in\P$, and legal compositions $\phi,\psi$ so that $x=\phi(0)$ and $y=\psi(0)$, we have that, if  $x<y$ then $\delta(\phi)<\delta(\psi)$.
\item[(2b)] If $\P\cap\Q=\emptyset$ then there exists $a\in \mathbb{R}^{\geq 0}$ which satisfies the following condition:
If $\phi$ and $\psi$ are legal compositions with $1\in\dom(\phi)$ and $0\in \dom(\psi)$, and  if $\phi(1)<r_i^*(0)$  then 
$\delta(\psi)<a<d_i-\delta(\phi)$. 
\end{itemize}
\end{enumerate}
\end{assumption}

\subsection{The displacement function in the case where $\P\cap\Q=\emptyset$.}
This subsection contains results regarding the displacement function $\delta$ under Assumption \ref{assump:suff}, which we will use to prove sufficiency of the conditions of Theorem 
\ref{thm:nec-suff-cond-N} in the case where $\P\cap\Q=\emptyset$. Our objective is to show that, when $\P\cap\Q=\emptyset$, there is a nice one-to-one correspondence between the sets $\P$ and $\Q$.

\begin{lemma}[$\delta$ is ``increasing'']\label{lem:delta-increasing}
Under Assumption \ref{assump:suff} and  $\P\cap\Q=\emptyset$,  let $\phi_1$ and $\phi_2$ be legal functions  and $x\in \dom(\phi_1)\cap \dom(\phi_2)$.  Then
\begin{itemize}
\item[(i)] If $\phi_1(x)<\phi_2(x)$ then $\delta(\phi_1)<\delta(\phi_2).$
\item[(ii)] If $\phi_1(x)=\phi_2(x)$ then $\delta(\phi_1)=\delta(\phi_2)$.
\end{itemize}
\end{lemma}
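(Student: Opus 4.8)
The plan is to reduce the statement to the already-established Condition (2a) of Theorem \ref{thm:nec-suff-cond-N} (which, under Assumption \ref{assump:suff}, we are allowed to assume holds) by ``pulling everything back to $0$''. First I would observe that since $x\in\dom(\phi_1)\cap\dom(\phi_2)$ and both these domains are closed intervals with left endpoint in $\P$ (by Proposition \ref{prop:orbits-domain}(i)), and since $x$ itself may be written as $\eta(0)$ for some legal composition $\eta$ — wait, that requires $x\in\P$, which need not hold. So instead I would argue as follows: pick any point $p$ in $\P$ lying in $\dom(\phi_1)\cap\dom(\phi_2)$; in fact take $p$ to be the left endpoint of this intersection, which by Proposition \ref{prop:orbits-domain}(i) lies in $\P$, so $p=\eta(0)$ for some legal composition $\eta$. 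Then $\phi_1\circ\eta$ and $\phi_2\circ\eta$ are legal compositions with $0$ in their domains, and $(\phi_1\circ\eta)(0)=\phi_1(p)$, $(\phi_2\circ\eta)(0)=\phi_2(p)$. The catch is that $p$ is generally not equal to $x$, so comparing values at $p$ is not the same as comparing values at $x$.

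To handle this I would use monotonicity together with Condition (1). Since every legal function is strictly increasing (Observation \ref{obs1}), the hypothesis $\phi_1(x)<\phi_2(x)$ is equivalent to $\phi_2^{-1}\circ\phi_1$ having $x$ in its domain with $(\phi_2^{-1}\circ\phi_1)(x)<x$; and $\phi_1(x)=\phi_2(x)$ is equivalent to $(\phi_2^{-1}\circ\phi_1)(x)=x$. By Observation \ref{obs1}, $\phi_2^{-1}\circ\phi_1$ is again a legal function, call it $\theta$, with $\dom(\theta)\ni x$, and note $\delta(\theta)=\delta(\phi_1)-\delta(\phi_2)$ (displacement is additive under composition since signatures add). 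So the lemma is equivalent to the single statement: \emph{if $\theta$ is a legal function and $\theta(x)<x$ (resp.\ $=x$) for some $x\in\dom(\theta)$, then $\delta(\theta)<0$ (resp.\ $=0$).} The equality case is immediate from Condition (1): $\theta(x)=x$ forces $\theta$ to be the identity on its domain, whose signature is $(0,\dots,0)$ — here I should note that the identity legal function's signature is forced to be all-zero, which follows because $r_i^*\circ\ell_i^*$ and $\ell_i^*\circ r_i^*$ are the only ``obviously identity'' compositions and any presentation of the identity must cancel; more carefully, apply the equality to get the orbit collapses, or invoke Proposition \ref{prop:pi-delta}(iv) in the contrapositive direction — but since we are in the sufficiency section we cannot use $\pi$, so I'd use Condition (1) plus the definition of $\delta$ directly.

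For the strict inequality $\theta(x)<x\Rightarrow\delta(\theta)<0$: extend $\theta$'s domain interval to its left endpoint $p\in\P$, write $p=\eta(0)$, and consider $\theta\circ\eta$ and $\eta$, both legal compositions sending $0$ to $\theta(p)$ and $p$ respectively. I would claim $\theta(p)<p$: indeed $\theta$ is strictly increasing with $\theta(x)<x$ at some $x\geq p$, and if $\theta(p)\geq p$ then by continuity/monotonicity and Condition (1) (no legal function can cross the diagonal without being the identity, since a crossing point would be a fixed point forcing identity) we get a contradiction — this is the key use of Condition (1), ruling out diagonal crossings. Hence $\theta(p)<p$, so by Condition (2a) applied to the points $\theta(p),p\in\P$ with witnessing compositions $\theta\circ\eta$ and $\eta$, we get $\delta(\theta\circ\eta)<\delta(\eta)$, i.e.\ $\delta(\theta)+\delta(\eta)<\delta(\eta)$, i.e.\ $\delta(\theta)<0$, as desired. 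Part (ii) with $\theta(x)=x$ gives $\theta=\mathrm{id}$ on its domain, and then the same diagonal-crossing argument shows $\theta(p)=p$, so (2a) cannot give a strict inequality either way, forcing $\delta(\theta)=0$ (one gets $\delta(\theta\circ\eta)=\delta(\eta)$ from the non-strict version of (2a), which is part of what (2a) asserts). The main obstacle I anticipate is the careful justification that a legal function cannot cross the diagonal — that $\theta(x)<x$ somewhere forces $\theta<\mathrm{id}$ on the whole connected domain — which I would nail down using strict monotonicity of $\theta$ and Condition (1) (any interior fixed point would force $\theta$ to be globally the identity, contradicting $\theta(x)<x$), with a small separate argument handling the endpoints of $\dom(\theta)$.
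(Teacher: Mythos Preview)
Your overall strategy for part (i) --- push the comparison down to a point of $\P$ and invoke Condition (2a) --- matches the paper's approach, and your ``no diagonal crossing'' idea (a fixed point of a legal function forces it to be the identity, by Condition (1)) is exactly the mechanism the paper uses. However, there is a technical slip in your reduction: the claimed equivalence ``$\phi_1(x)<\phi_2(x)$ is equivalent to $\theta=\phi_2^{-1}\circ\phi_1$ having $x$ in its domain with $\theta(x)<x$'' fails in one direction, since $\phi_1(x)<\phi_2(x)$ does \emph{not} guarantee $\phi_1(x)\in\range(\phi_2)$, so $x$ need not lie in $\dom(\theta)$ at all. The paper avoids this by working directly with $\phi_1,\phi_2$ at the left endpoint $p$ of $\dom(\phi_1)\cap\dom(\phi_2)$: assuming for contradiction that $\phi_1(p)\geq\phi_2(p)$, the intermediate value theorem produces $y\in[p,x)$ with $\phi_1(y)=\phi_2(y)$; only \emph{then} is $\phi_2^{-1}\circ\phi_1$ formed, and the existence of $y$ is what supplies the bound $\phi_2(p_2)\leq\phi_1(y)<\phi_1(x)<\phi_2(x)\leq\phi_2(q_2)$ needed to verify $x\in\dom(\phi_2^{-1}\circ\phi_1)$. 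Your argument can be repaired along these lines.

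The genuine gap is in part (ii). Condition (1) tells you $\theta$ is the identity \emph{as a function} on its domain, but a legal composition carries a presentation, and two presentations of the identity may have different signatures --- so ``$\theta=\mathrm{id}$'' does not by itself force $\delta(\theta)=0$. Your proposed fix, that ``(2a) cannot give a strict inequality either way, forcing $\delta(\theta)=0$'', does not work: Condition (2a) is the implication $x<y\Rightarrow\delta(\phi)<\delta(\psi)$ and says nothing when $x=y$, so there is no ``non-strict version'' to invoke. The paper's device here is an iteration trick you are missing: writing $p=\eta(0)$ and $\psi=\eta^{-1}\circ\theta\circ\eta$ (after arranging WLOG that $\delta(\theta)\geq 0$), one has $\psi^n(0)=0<r_1^*(0)$ for every $n$, and Condition (2a) applied to the pair $(\psi^n, r_1^*)$ gives $n\,\delta(\theta)=\delta(\psi^n)<d_1$ for all $n$, which forces $\delta(\theta)=0$.
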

\begin{proof}
Let $\phi_1$ and $\phi_2$ be as above. By Lemma \ref{lemma:domains-of-legal},  $\dom(\phi_1)=[p_1,q_1]$ and $\dom(\phi_2)=[p_2,q_2]$ for  $p_1,p_2\in\P$ and $q_1,q_2\in\Q$. Since $\dom(\phi_1)\cap \dom(\phi_2)\neq\emptyset$ we have $\dom(\phi_1)\cap \dom(\phi_2)=[p,q]$ with $p=\max\{p_1,p_2\}$ and $q=\min\{q_1,q_2\}$. If $x=p$ then Condition (2a) of Theorem \ref{thm:nec-suff-cond-N} implies that $\delta(\phi_1)<\delta(\phi_2)$ and we are done. Now suppose $x\in(p,q]$. We will show that $\phi_1(p)<\phi_2(p)$. Suppose to the contrary that $\phi_1(p)\geq \phi_2(p)$. Since $\phi_1$ and $\phi_2$ are continuous functions on $[p,q]$, and $\phi_1(x)<\phi_2(x)$,  there exists a point $y\in [p,x)$ with $\phi_1(y)=\phi_2(y)$. Hence $\range(\phi_1)\cap\range(\phi_2)\neq \emptyset$. So by Lemma \ref{lemma:domains-of-legal}, $\range(\phi_1)\cap\range(\phi_2)=[p',q']$ where $p'=\max\{\phi_1(p_1),\phi_2(p_2)\}\in\P$ and $q'=\min\{\phi_1(q_1),\phi_2(q_2)\}\in\Q$. In particular, since $\P\cap \Q=\emptyset$, we have
\begin{equation*}
 \max\{p_1,p_2\}<\min\{q_1,q_2\}  \ \ \mbox{ and }\ \ \max\{\phi_1(p_1),\phi_2(p_2)\}<\min\{\phi_1(q_1),\phi_2(q_2)\}.
\end{equation*}
Clearly $\phi_2^{-1}\circ\phi_1(y)=y$. Thus by Condition (1) of Theorem \ref{thm:nec-suff-cond-N}, we have $\phi_2^{-1}\circ \phi_1$ is the identity function on its domain.
Next we observe that $x\in  \dom(\phi_2^{-1}\circ\phi_1)$. This follows from the fact that $\phi_1(x)\in \range(\phi_2)=\dom(\phi_2^{-1})$, as 
$$\phi_2(p_2)\leq \phi_2(y)=\phi_1(y)<\phi_1(x)<\phi_2(x)\leq \phi_2(q_2).$$ 
Therefore, we must have $\phi_2^{-1}\circ\phi_1(x)=x$, {\it i.e.} $\phi_1(x)=\phi_2(x)$, which is a contradiction. This finishes the proof of (i).

To prove (ii), let $x\in\dom(\phi_1)\cap\dom(\phi_2)$ be such that $\phi_1(x)=\phi_2(x)$. Assume without loss of generality that $\delta(\phi_2)\geq \delta(\phi_1)$, thus $\delta(\phi_1^{-1}\circ\phi_2)\geq 0$.  Clearly $\phi_1^{-1}\circ\phi_2(x)=x$, and by Condition (1) of Theorem 
\ref{thm:nec-suff-cond-N}, we have that the legal function $\phi_1^{-1}\circ\phi_2$ is the identity function on its domain, say $[p,q]$. In particular, $\phi_1^{-1}\circ\phi_2(p)=p$.
Let $\eta$ be a legal function with $\eta(0)=p$, and consider $\psi=\eta^{-1}\circ\phi_1^{-1}\circ\phi_2\circ\eta$. Clearly $\psi^n(0)=0<r_1^*(0)$, where $\psi^n$ denotes the $n$-fold composition of $\psi$ with itself. Now Condition (2a) of Theorem  \ref{thm:nec-suff-cond-N} gives that 
$$0\leq n\delta(\phi_1^{-1}\circ\phi_2)=\delta(\psi^n)<d_1,$$
for every positive integer $n$. So $\delta(\phi_1^{-1}\circ\phi_2)$ must be zero, and 
 $\delta(\phi_1)=\delta(\phi_2)$. 
\end{proof}

\begin{corollary}\label{cor:h-delta-increasing}
Under Assumption \ref{assump:suff} and  $\P\cap\Q=\emptyset$, let $\phi_1$ and $\phi_2$  be legal functions which have non-disjoint domains. Then
\begin{itemize}
\item[(i)]  $\phi_1(x)<\phi_2(x)$ for some $x\in \dom(\phi_1)\cap \dom(\phi_2)$ if and only if $\delta(\phi_1)< \delta(\phi_2)$ if and only if $\phi_1<\phi_2$ everywhere on $\dom(\phi_1)\cap \dom(\phi_2)$.
\item[(ii)] $\phi_1(x)=\phi_2(x)$ for some $x\in \dom(\phi_1)\cap \dom(\phi_2)$ if and only if $\delta(\phi_1)= \delta(\phi_2)$ if and only if $\phi_1=\phi_2$ everywhere on $\dom(\phi_1)\cap \dom(\phi_2)$.
\end{itemize}
\end{corollary}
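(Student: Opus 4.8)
The plan is to derive Corollary \ref{cor:h-delta-increasing} as an essentially immediate consequence of Lemma \ref{lem:delta-increasing}, together with the structural facts about domains and ranges of legal functions recorded in Lemma \ref{lemma:domains-of-legal} and Proposition \ref{prop:orbits-domain}. The only genuine content is upgrading the pointwise statements of Lemma \ref{lem:delta-increasing} (``at one point $x$'') to statements that are uniform over the whole common domain, and checking that the various biconditionals close up into a cycle of implications. Since $\phi_1$ and $\phi_2$ have non-disjoint domains, by Lemma \ref{lemma:domains-of-legal} their common domain is a nonempty closed interval $[p,q]$ with $p\in\P$, $q\in\Q$; in particular it is nonempty, so statements quantified existentially over $\dom(\phi_1)\cap\dom(\phi_2)$ are not vacuous.

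For part (i), I would organize the argument as a loop of three implications. \emph{($\phi_1(x)<\phi_2(x)$ for some $x$ $\Rightarrow$ $\delta(\phi_1)<\delta(\phi_2)$):} this is exactly Lemma \ref{lem:delta-increasing}(i). \emph{($\delta(\phi_1)<\delta(\phi_2)$ $\Rightarrow$ $\phi_1<\phi_2$ everywhere on $[p,q]$):} here I argue by contradiction — suppose $\phi_1(x_0)\ge\phi_2(x_0)$ for some $x_0\in[p,q]$. If $\phi_1(x_0)=\phi_2(x_0)$, Lemma \ref{lem:delta-increasing}(ii) gives $\delta(\phi_1)=\delta(\phi_2)$, contradicting strictness; if $\phi_1(x_0)>\phi_2(x_0)$, apply Lemma \ref{lem:delta-increasing}(i) with the roles of $\phi_1,\phi_2$ swapped to get $\delta(\phi_2)<\delta(\phi_1)$, again a contradiction. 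Hence $\phi_1<\phi_2$ at every point of $[p,q]$. \emph{($\phi_1<\phi_2$ everywhere $\Rightarrow$ $\phi_1(x)<\phi_2(x)$ for some $x$):} trivial since $[p,q]\neq\emptyset$. This closes the cycle, so the three conditions are equivalent.

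For part (ii) the same scheme applies, now using Lemma \ref{lem:delta-increasing}(ii) for the forward direction ($\phi_1(x)=\phi_2(x)$ for some $x$ $\Rightarrow$ $\delta(\phi_1)=\delta(\phi_2)$) and the contrapositive of part (i) — or directly Corollary \ref{cor:equal-delta} — to go from $\delta(\phi_1)=\delta(\phi_2)$ to $\phi_1=\phi_2$ on the whole intersection of the domains; indeed, $\delta(\phi_1)=\delta(\phi_2)$ rules out both $\phi_1(x)<\phi_2(x)$ and $\phi_1(x)>\phi_2(x)$ at any $x\in[p,q]$ by part (i), forcing equality everywhere, and ``$\phi_1=\phi_2$ everywhere $\Rightarrow$ equality at some point'' is again trivial by nonemptiness of $[p,q]$.

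I do not anticipate a real obstacle: the substantive work was done in Lemma \ref{lem:delta-increasing}, and what remains is bookkeeping. The one point that needs a moment's care is making sure the ``everywhere on $\dom(\phi_1)\cap\dom(\phi_2)$'' quantifier is handled consistently — in particular, that I am entitled to pass from a single witness point to the entire interval, which is exactly what the contradiction argument in the second implication of part (i) secures, and that the common domain is genuinely an interval (so that ``for some $x$'' and ``for all $x$'' are the only two alternatives and there is no pathology from a disconnected domain). This is guaranteed by Lemma \ref{lemma:domains-of-legal} and the hypothesis that the domains are non-disjoint.
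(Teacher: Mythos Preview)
Your argument is correct and is exactly what the paper intends: the corollary is stated immediately after Lemma \ref{lem:delta-increasing} without proof, and your cycle of implications from Lemma \ref{lem:delta-increasing}(i)--(ii) is the natural unpacking. One small caveat: drop the parenthetical appeal to Corollary \ref{cor:equal-delta}, since that result is proved under Assumption \ref{ass:pi-1} (existence of a uniform linear embedding $\pi$), whereas here you are working under Assumption \ref{assump:suff} and do not yet know $\pi$ exists --- your primary route via part (i) is the right one and is self-contained.
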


The following lemma is a direct consequence of Lemma \ref{lem:delta-increasing}.

\begin{lemma}\label{lem:use-for-1-1-correspondence}
Suppose that we are in the settings of Lemma \ref{lem:delta-increasing}. Let $\phi=f_1\circ \dots\circ f_k$ be a legal composition and $x\in\dom(\phi)$. 
For $1\leq t\leq k$, let $x_t=f_{k-t+1}\circ \dots \circ f_k(x)$. Then
\begin{itemize}
\item[(i)] If $x\leq\min\{x_t:\, 1\leq t\leq k\}$ then $[0,x]\subseteq \dom(\phi)$.
\item[(ii)] If $x\geq \max\{x_t:\, 1\leq t\leq k\}$ then $[x,1]\subseteq \dom(\phi)$.
\end{itemize}
\end{lemma}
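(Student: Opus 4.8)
The plan is to prove both parts by a symmetric argument, so I will focus on part (i) and only remark on the modifications for (ii). Fix a legal composition $\phi=f_1\circ\dots\circ f_k$ and a point $x\in\dom(\phi)$ with $x\leq\min\{x_t:\ 1\leq t\leq k\}$, where $x_0=x$ and $x_t=f_{k-t+1}\circ\dots\circ f_k(x)$. By Lemma~\ref{lemma:domains-of-legal} and Proposition~\ref{prop:orbits-domain}, $\dom(\phi)=[p,q]$ with $p\in\P$ and $q\in\Q$, so it suffices to show $p\leq x$ would fail only if... wait, $p\leq x$ always holds; the point is to show $p=0$, i.e. that the left endpoint of $\dom(\phi)$ is $0$. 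Actually what we want is $[0,x]\subseteq[p,q]$, which is equivalent to $p=0$.

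First I would handle the base case $k=1$: each $f_1\in\{r_i^*,\ell_i^*\}$ has domain $[0,\ell_i(1)]$ or $[r_i(0),1]$. If $f_1=r_i^*$, then $\dom(f_1)=[0,\ell_i(1)]$ already starts at $0$, so we are done regardless of $x$. If $f_1=\ell_i^*$, then $\dom(f_1)=[r_i(0),1]$, and $x_1=\ell_i^*(x)<x$ since $\ell_i^*$ moves points strictly left on its domain (its range is $[0,\ell_i(1)]\subseteq[0,r_i(0)]$... more carefully, $\ell_i^*(x)\le x$ with equality only at a fixed point, but well-separatedness rules out a fixed point unless $x$ is on the diagonal, which it need not be); so $x_1<x$ contradicts $x\le\min\{x_t\}$ unless... hmm, so in fact the hypothesis $x\le\min\{x_t\}$ forces $f_1$ to be some $r_i^*$ in the $k=1$ case, and then $\dom(f_1)\supseteq[0,x]$.

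For the inductive step, write $\phi=f_1\circ\psi$ with $\psi=f_2\circ\dots\circ f_k$, and let $\dom(\psi)=[p',q']$. Note $x\in\dom(\phi)$ means $x\in\dom(\psi)$ and $\psi(x)=x_{k-1}\in\dom(f_1)$. The orbit of $x$ under $\psi$ is $\{x_0,\dots,x_{k-1}\}$, and since $x\le\min\{x_t:1\le t\le k\}\le\min\{x_t:1\le t\le k-1\}$, the induction hypothesis applied to $\psi$ gives $[0,x]\subseteq\dom(\psi)=[p',q']$, i.e. $p'=0$. Now I must push through $f_1$. We have $\range(\psi)=[\psi(0),\psi(q')]=[\psi(0),q''']$ for some endpoints, and $\dom(\phi)=\psi^{-1}(\dom(f_1)\cap\range(\psi))$. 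The key observation is that $x_{k-1}=\psi(x)\ge x$ (again by the min hypothesis, $x_{k-1}\ge x=x_0$), and $\psi(x)\in\dom(f_1)$. Since $\psi$ is strictly increasing with $\psi(0)$ defined, and $\range(\psi)\ni\psi(0),\psi(x)$ with $\psi(0)\le x\le \psi(x)$ (using $\psi$ increasing and $p'=0\le x$, so $\psi(0)\le\psi(x)$; and $\psi(0)\le x$ because... this needs care). The cleanest route: consider the point $\psi(0)\in\P$. I claim $\psi(0)\in\dom(f_1)$, which then gives $0\in\dom(\phi)$ and hence $[0,x]\subseteq\dom(\phi)$ since domains are intervals containing $0$ and $x$. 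To prove $\psi(0)\in\dom(f_1)$: if $f_1=r_j^*$, then $\dom(f_1)=[0,\ell_j(1)]$ starts at $0$, so I need $\psi(0)\le\ell_j(1)$; but $\psi(x)\in\dom(f_1)=[0,\ell_j(1)]$ and $\psi(0)\le\psi(x)$, done. If $f_1=\ell_j^*$, then $\dom(f_1)=[r_j(0),1]$, and since $\psi(0)\le\psi(x)$ I need the \emph{lower} bound $r_j(0)\le\psi(0)$; this is where I expect the real work, and where I would invoke Lemma~\ref{lem:delta-increasing} / Corollary~\ref{cor:h-delta-increasing} together with Condition~(2a): one compares the legal function $\psi$ with $r_j^*$ (whose value at $0$ is $r_j(0)$), using that $\delta$ orders points in $\P$, to conclude $\psi(0)\ge r_j^*(0)=r_j(0)$ from the fact that $\psi(x)\in[r_j(0),1]$ propagates down. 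More precisely, $x_{k-1}=\psi(x)\ge r_j(0)=r_j^*(0)$ and $x=x_0\le x_{k-1}$; if $\psi(0)<r_j(0)$ then by Corollary~\ref{cor:h-delta-increasing} (monotonicity of legal functions once domains overlap) one derives a contradiction with $\psi(x)\ge r_j(0)$ and the ordering of the orbit. Part (ii) is the mirror image, replacing $0$ by $1$, $r_i^*$ by $\ell_i^*$, ``min'' by ``max'', and using the right endpoints $q\in\Q$.

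The main obstacle, as indicated, is the case $f_1=\ell_j^*$ in the inductive step: showing that the left endpoint $r_j(0)$ of $\dom(\ell_j^*)$ lies below $\psi(0)$. The hypothesis $x\le\min\{x_t\}$ is exactly tailored so that every time the composition ``moves left'' (applies some $\ell_j^*$), it does so from a point that is already to the right of $x$ by at least enough room; formalizing ``enough room'' is what forces the use of Lemma~\ref{lem:delta-increasing}. An alternative, possibly cleaner, organization would be to prove the statement directly from Lemma~\ref{lem:delta-increasing}(i): pick any $y\in[0,x]$, build a legal composition witnessing $y\in\dom(\phi)$ by comparing displacements, but I suspect the inductive peeling-off of $f_1$ above is the most transparent, with Corollary~\ref{cor:h-delta-increasing} doing the heavy lifting in the one hard subcase.
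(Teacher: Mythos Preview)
Your approach is essentially the paper's: induction that peels off the outermost factor, with the easy case $f_1=r_j^*$ handled by monotonicity of $\psi$ (exactly as the paper does), and the hard case $f_1=\ell_j^*$ handled via Lemma~\ref{lem:delta-increasing}/Corollary~\ref{cor:h-delta-increasing}. The overall structure and the identification of where the work lies are correct.

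There is, however, one gap in your hard case. You write $\psi(x)\ge r_j(0)$ (which is just the statement $\psi(x)\in\dom(\ell_j^*)$) and try to derive a contradiction from $\psi(0)<r_j(0)$. That inequality is too weak: knowing $\psi(0)<r_j^*(0)$ gives, via Corollary~\ref{cor:h-delta-increasing}, that $\psi(x)<r_j^*(x)$ on the common domain, and this does not contradict $\psi(x)\ge r_j^*(0)$. What you actually need is the stronger comparison $\psi(x)\ge r_j^*(x)$, and this is precisely where the hypothesis $x\le x_k$ enters: since $x_k=\ell_j^*(\psi(x))\ge x$ and $r_j^*$ is the inverse of $\ell_j^*$, applying $r_j^*$ yields $\psi(x)\ge r_j^*(x)$. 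Now Lemma~\ref{lem:delta-increasing} at the point $x$ gives $\delta(\psi)\ge d_j$, and then Corollary~\ref{cor:h-delta-increasing} at the point $0$ gives $\psi(0)\ge r_j^*(0)$, so $\psi(0)\in\dom(\ell_j^*)$ as required. This is exactly the paper's argument; once you replace $r_j(0)$ by $r_j^*(x)$ and invoke the inverse relation $r_j^*=(\ell_j^*)^{-1}$, your sketch becomes complete.
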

\begin{proof}
We only prove (i). A similar argument proves part (ii) of the lemma.
Let $\phi$ and $x$ be as stated above. To prove part (i), for $1\leq t\leq k$, define $\phi_t=f_{k-t+1}\circ \dots \circ f_k$, and let $\phi_0$ be the identity function. Then for every $t$, $x_t=\phi_t(x)$, and specifically $x_0=x$ and $x_k=\phi(x)$. 

Assume $x\leq x_t$ for all $1\leq t\leq k$.
We show by induction on $t$ that $[0,x]\subseteq \dom(\phi_t)$. Since the domain of a legal function is an interval, and $x\in \dom(\phi_t)$, it is enough to show that $0\in \dom(\phi_t)$. For $t=0$, the statement is obvious. 
For the induction step, fix $t$, $0<t\leq k$, and consider $\phi_t=f_{k-t+1}\circ \phi_{t-1}$. By the induction hypothesis, $0\in \dom(\phi_{t-1})$. Let $y=\phi_{t-1}(0)$. We must show that $y\in \dom(f_{k-t+1})$. Consider two cases:
First assume that $f_{k-t+1}=\ell^*_i$ for some $1\leq i\leq N-1$. Since $x\leq x_t=\ell^*_i\circ\phi_{t-1}(x)$, we have $\phi_{t-1}(x)\geq r_i^*(x)$. 
So by Lemma \ref{lem:delta-increasing} we have $\delta(\phi_{t-1})\geq \delta(r_i^*)=d_i$. 
This, together with another application of Lemma \ref{lem:delta-increasing},  implies that $\phi_{t-1}(0)\geq r_i^*(0)$.  Hence $0\in \dom(\phi_t)$.

Next assume that $f_{k-t+1}=r_i^*$,  where $1\leq i\leq N-1$. Since $0\leq x$, we must have that $y\leq x_{t-1}$. 
Observe that  $\dom(f_{k-t+1})=[0,\ell_i^*(1)]$ and $x_{t-1}\in \dom(f_{k-t+1})$. 
Therefore $y\in \dom (f_{k-t+1})$, and hence $0\in \dom(\phi_t)$. This completes the proof of part (i). 
\end{proof}
\begin{corollary}\label{cor:dom-range-inclusion}
Under Assumption \ref{assump:suff} and $\P\cap\Q=\emptyset$, let  $\phi_1=f_1\circ \dots\circ f_k$ and $\phi_2=g_1\circ \dots\circ g_l$ be legal functions with $x\in \dom(\phi_1)\cap\dom(\phi_2)$. 
For $1\leq t\leq k$, let $x_t=f_{k-t+1}\circ \dots \circ f_k(x)$. Similarly, let  $y_t=g_{l-t+1}\circ \dots \circ g_l(x)$ for $1\leq t\leq l$.

\begin{itemize}
\item[(i)] Suppose $x\leq x_t\leq \phi_1(x)$ for every $1\leq t\leq k$, and $x\leq y_t\leq \phi_2(x)$ for every $1\leq t\leq l$. Then $\dom(\phi_i)=[0,\phi_i^{-1}(1)]$ and $\range(\phi_i)=[\phi_i(0),1]$. Moreover if $\phi_2(x)<\phi_1(x)$, then $\dom(\phi_1)\subset\dom(\phi_2)$ and $\range(\phi_1)\subset\range(\phi_2).$
\item[(ii)] Suppose $x\geq x_t\geq \phi_1(x)$ for every $1\leq t\leq k$, and $x\geq y_t\geq \phi_2(x)$ for every $1\leq t\leq l$. Then $\dom(\phi_i)=[\phi_i^{-1}(0),1]$ and $\range(\phi_i)=[0,\phi_i(1)]$. Moreover if $\phi_1(x)<\phi_2(x)$, then $\dom(\phi_1)\subset\dom(\phi_2)$ and $\range(\phi_1)\subset\range(\phi_2).$
\end{itemize}

\end{corollary}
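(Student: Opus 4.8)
The plan is to bootstrap from Lemma \ref{lem:use-for-1-1-correspondence} and the ordering given by Corollary \ref{cor:h-delta-increasing}, treating parts (i) and (ii) as mirror images (I would write out (i) and remark that (ii) follows by the symmetry exchanging $r_j^*\leftrightarrow\ell_j^*$, $0\leftrightarrow 1$, and reversing inequalities). For part (i), first observe that the hypothesis $x\leq x_t\leq \phi_1(x)$ for all $t$ in particular gives $x\leq\min_t x_t$, so Lemma \ref{lem:use-for-1-1-correspondence}(i) applies to $\phi_1$ and yields $[0,x]\subseteq\dom(\phi_1)$; since $\dom(\phi_1)$ is a closed interval whose left endpoint lies in $\P$ (Lemma \ref{lemma:domains-of-legal}), and $0\in\dom(\phi_1)$, we get $\dom(\phi_1)=[0,\phi_1^{-1}(1)]$ because $\phi_1^{-1}(1)\in\Q$ (Proposition \ref{prop:orbits-domain}(ii)) and $\phi_1$ is strictly increasing, so $\range(\phi_1)=[\phi_1(0),1]$. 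The same argument applied to $\phi_2$ (using $x\leq y_t\leq\phi_2(x)$) gives $\dom(\phi_2)=[0,\phi_2^{-1}(1)]$ and $\range(\phi_2)=[\phi_2(0),1]$. This disposes of the first claim of (i).

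For the ``moreover'' part, assume $\phi_2(x)<\phi_1(x)$. By Corollary \ref{cor:h-delta-increasing}(i) this is equivalent to $\delta(\phi_2)<\delta(\phi_1)$, and hence equivalent to $\phi_2<\phi_1$ everywhere on the common part of their domains; in particular $\phi_2(0)<\phi_1(0)$, which already shows $\range(\phi_1)=[\phi_1(0),1]\subsetneq[\phi_2(0),1]=\range(\phi_2)$. For the domain inclusion I want $\phi_1^{-1}(1)<\phi_2^{-1}(1)$, i.e.\ (applying the strictly increasing $\phi_1$) $1<\phi_1(\phi_2^{-1}(1))$ would be wrong in form, so instead: since $\phi_2<\phi_1$ on $\dom(\phi_1)\cap\dom(\phi_2)$ and both equal their respective values at the shared left endpoint $0$, the point $\phi_2^{-1}(1)\in\dom(\phi_2)$ satisfies $\phi_1(\phi_2^{-1}(1))>\phi_2(\phi_2^{-1}(1))=1$ \emph{if} $\phi_2^{-1}(1)\in\dom(\phi_1)$; but $\phi_1$ only takes values in $[0,1]$, so it must be that $\phi_2^{-1}(1)\notin\dom(\phi_1)$, forcing $\phi_1^{-1}(1)<\phi_2^{-1}(1)$ and hence $\dom(\phi_1)=[0,\phi_1^{-1}(1)]\subsetneq[0,\phi_2^{-1}(1)]=\dom(\phi_2)$. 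Combined with the range statement, this is exactly the assertion.

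The main obstacle I anticipate is not any single hard estimate but rather keeping the bookkeeping honest: one must be careful that the hypotheses ``$x\le x_t\le\phi_1(x)$ for all $t$'' are genuinely strong enough to invoke \emph{both} halves of the orbit lemma (the lower bound $x\le x_t$ gives $[0,x]\subseteq\dom(\phi_1)$, and one does not actually need the upper bound $x_t\le\phi_1(x)$ for the domain computation, only to locate where the $x_t$ sit), and that the equivalences in Corollary \ref{cor:h-delta-increasing} are being used in the direction ``pointwise comparison at one point $\Rightarrow$ comparison everywhere on the common domain,'' which is precisely what lets us pass from the single hypothesis $\phi_2(x)<\phi_1(x)$ to $\phi_2(0)<\phi_1(0)$. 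Part (ii) is then verbatim the same argument under the substitution described above, and I would state it without repeating the details.
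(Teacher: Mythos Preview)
Your argument for the first claim of (i) has a real gap. From $x\le\min_t x_t$ and Lemma~\ref{lem:use-for-1-1-correspondence}(i) you correctly get $0\in\dom(\phi_1)$, so $\dom(\phi_1)=[0,q]$ for some $q$. But you then assert $q=\phi_1^{-1}(1)$, citing Proposition~\ref{prop:orbits-domain}(ii); that proposition only says the orbit of $q$ \emph{touches} $1$, not that $\phi_1(q)=1$, and in fact $\phi_1^{-1}(1)$ is not even known to be defined at this stage. Your parenthetical remark that ``one does not actually need the upper bound $x_t\le\phi_1(x)$ for the domain computation'' is precisely where the error lies: without that upper bound the conclusion $1\in\range(\phi_1)$ can fail (e.g.\ for $\phi_1=\ell_1^*\circ r_1^*\circ r_1^*$ the orbit stays above $x$ but overshoots $\phi_1(x)$, and the range is $[r_1^*(0),\ell_1^*(1)]$).

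The fix is short. Either apply Lemma~\ref{lem:use-for-1-1-correspondence}(ii) to $\phi_1^{-1}$ at the point $\phi_1(x)$ --- the hypothesis $x_t\le\phi_1(x)$ says exactly that $\phi_1(x)$ dominates the orbit of $\phi_1(x)$ under $\phi_1^{-1}$, giving $1\in\dom(\phi_1^{-1})=\range(\phi_1)$ --- or follow the paper's route: use Corollary~\ref{cor:h-delta-increasing} to transfer the orbit inequalities $x\le x_t\le\phi_1(x)$ to the endpoints $p,q$ of $\dom(\phi_1)$, obtaining $p\le f_{k-t+1}\circ\cdots\circ f_k(p)$ and $f_{k-t+1}\circ\cdots\circ f_k(q)\le\phi_1(q)$ for all $t$; since by Proposition~\ref{prop:orbits-domain}(ii) the orbit of $p$ touches $0$ and that of $q$ touches $1$, these bounds force $p=0$ and $\phi_1(q)=1$. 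Your handling of the ``moreover'' clause via the contradiction $\phi_1(\phi_2^{-1}(1))>1$ is correct; the paper instead observes directly that $\delta(\phi_1^{-1})<\delta(\phi_2^{-1})$ and applies Corollary~\ref{cor:h-delta-increasing} to the inverses at $1$, which is a bit cleaner but equivalent.
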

\begin{proof}
We only prove (i). A similar argument proves part (ii). 
Let $\dom(\phi_1)=[p,q]$. By Corollary \ref{cor:h-delta-increasing}, for every $1\leq t\leq k$ we have $\delta(f_{k-t+1}\circ \dots\circ f_k)\geq 0$ and $\delta(f_{k-t+1}\circ \dots\circ f_k)\leq \delta(\phi_1)$, since $x\leq f_{k-t+1}\circ \dots\circ f_k(x)\leq \phi_1(x)$. Thus, $p\leq f_{k-t+1}\circ \dots\circ f_k(p)\leq \phi_1(p)$ and $q\leq f_{k-t+1}\circ \dots\circ f_k(q)\leq \phi_1(q)$ for every $1\leq t\leq k$. 
Now Proposition \ref{prop:orbits-domain} immediately implies that $p=0$ and $\phi_1(q)=1$. This proves the first part of (i).
Next, observe that since $\phi_2(x)<\phi_1(x)$, by Corollary \ref{cor:h-delta-increasing} we have $\delta(\phi_2)<\delta(\phi_1)$. This implies that $\delta(\phi_1^{-1})<\delta(\phi_2^{-1})$. Then $\phi_2(0)<\phi_1(0)$ and $\phi_1^{-1}(1)<\phi_2^{-1}(1)$. This completes the proof of the second part of (i).
\end{proof}

The next lemma states that each element of the set $\P$ is paired with exactly one element of the set $\Q$. 
\begin{lemma}\label{lem:new-1-1-correspondence}(One-to-one correspondence between $\P$ and $\Q$)
Let $w$ be as in Assumption \ref{assump:suff}, where $\P\cap\Q=\emptyset$. Then for any legal function $\phi$ with $0\in\dom(\phi)$ and signature $(m_1,\ldots,m_{N-1})$ there is a legal function $\psi$ with $1\in\dom(\psi)$ and signature $(-m_1,\ldots,-m_{N-1})$ and vice versa.
\end{lemma}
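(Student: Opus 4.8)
The plan is to exploit the structure established in Corollary~\ref{cor:dom-range-inclusion} together with the ``increasing displacement'' principle of Lemma~\ref{lem:delta-increasing}. Given a legal function $\phi$ with $0\in\dom(\phi)$ and signature $(m_1,\ldots,m_{N-1})$, I want to produce a legal function $\psi$ with $1\in\dom(\psi)$ and signature $(-m_1,\ldots,-m_{N-1})$. The natural candidate is obtained by reversing $\phi$: if $\phi=f_1\circ\ldots\circ f_k$, consider the ``formal inverse'' word $f_k^{-1}\circ\ldots\circ f_1^{-1}$, where by Observation~\ref{obs1} each $f_i^{-1}$ is again one of the boundary functions $r_j^*,\ell_j^*$, and replacing each $r_j^*$ by $\ell_j^*$ (and vice versa) flips the signature. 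The issue is that this formal word need not be a legal composition (its domain could be empty), and even when it is legal its domain might not contain $1$; so the first step is to massage $\phi$ into a ``monotone'' orbit form before reversing it.

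The key steps, in order, would be as follows. First, using Proposition~\ref{prop:orbits-domain}, write $\dom(\phi)=[p,q]$ with $p=0\in\P$ and $q\in\Q$; then $\phi(q)$ touches $1$, so there is a prefix-type decomposition of the orbit of $q$ that reaches $1$. Second, I would reduce to the case where the orbit of $0$ under $\phi$ is monotone increasing, i.e. $0\le x_t\le \phi(0)$ for all $t$: if the orbit dips below $0$ or rises above $1$ at some intermediate step, then by Proposition~\ref{prop:orbits-domain}(ii) that step forces the orbit to pass through $0$ or $1$, and I can split $\phi$ at that point into shorter legal functions, handling each piece separately and concatenating the resulting $\psi$'s (signatures add, and $-$ of a sum is the sum of the $-$'s). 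Third, for a monotone-increasing orbit, Corollary~\ref{cor:dom-range-inclusion}(i) gives $\dom(\phi)=[0,\phi^{-1}(1)]$ and $\range(\phi)=[\phi(0),1]$; in particular $1\in\range(\phi)=\dom(\phi^{-1})$. Fourth, set $\psi=\phi^{-1}$: by Observation~\ref{obs1} this is realized as a legal composition whose signature is exactly $(-m_1,\ldots,-m_{N-1})$ (each $r_j^*$ in $\phi$ becomes an $\ell_j^*$ in $\psi$ and vice versa), and $1\in\dom(\psi)$ as required. The ``vice versa'' direction is symmetric, switching the roles of $0$ and $1$, $\P$ and $\Q$, and using Corollary~\ref{cor:dom-range-inclusion}(ii).

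I expect the main obstacle to be the bookkeeping in the second step: carefully decomposing a general legal composition into a concatenation of legal sub-compositions each of whose orbit is monotone, while keeping track of signatures additively and ensuring each sub-composition is genuinely legal (nonempty domain). One must be careful that when the orbit hits $0$ or $1$ at an intermediate stage, the relevant sub-word is still a legal composition with the endpoint in $\dom$; this is exactly the content of Proposition~\ref{prop:orbits-domain} applied to the sub-word, so it goes through, but the indexing is fiddly. A cleaner alternative I might use instead is to induct on the number of terms $k$ of $\phi$: strip off the last boundary function, apply the induction hypothesis, and then prepend the appropriate single flipped boundary function, checking at each stage via Lemma~\ref{lem:delta-increasing} and Corollary~\ref{cor:h-delta-increasing} that the composite remains legal with $1$ in its domain — this localizes the domain-nonemptiness verification to one step at a time and avoids the global orbit analysis. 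Either way, Lemma~\ref{lem:delta-increasing} and Corollary~\ref{cor:dom-range-inclusion} (both of which crucially use $\P\cap\Q=\emptyset$) are the engine that guarantees the reversed word actually has $1$ in its domain.
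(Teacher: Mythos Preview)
Your reduction step has a genuine gap. You propose to reduce to the case where the orbit $\{x_t\}$ of $0$ under $\phi$ satisfies $0\le x_t\le \phi(0)$ by splitting $\phi$ at points where the orbit ``dips below $0$ or rises above $1$'' and invoking Proposition~\ref{prop:orbits-domain}(ii). But the orbit always stays in $[0,1]$, so it never leaves that interval; and a non-monotone orbit starting at $0$ need not touch $0$ or $1$ at any intermediate step. For a concrete instance take $\phi=\ell_1^*\circ r_2^*$: the orbit of $0$ is $0,\ r_2^*(0),\ \ell_1^*r_2^*(0)$, with $0<\ell_1^*r_2^*(0)<r_2^*(0)<1$ (well-separation), so the orbit is not contained in $[0,\phi(0)]$ yet never touches $0$ or $1$. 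Moreover, for this same $\phi$ one checks $\dom(\phi^{-1})=[\ell_1^*r_2^*(0),\ell_1^*(1)]$, which does \emph{not} contain $1$; so the candidate $\psi=\phi^{-1}$ fails outright, and no amount of splitting at $0/1$-crossings will salvage it. Your alternative inductive plan has the same defect: stripping a boundary function off either end leaves a shorter word to which the hypothesis ``$0\in\dom$'' (or the dual ``$1\in\dom$'') need not apply, and the single prepended flip need not land in the right domain.

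What the paper does instead is decompose $\phi=\psi_k\circ\cdots\circ\psi_1$ at the successive local extrema of the orbit (so each $\psi_i$ is monotone in the sense of Corollary~\ref{cor:dom-range-inclusion}), and then set $\psi=\psi_k^{-1}\circ\cdots\circ\psi_1^{-1}$ --- inverting each piece but keeping them in the \emph{same} order, not the reversed order that would yield $\phi^{-1}$. The point is that Corollary~\ref{cor:dom-range-inclusion} forces the nesting $\range(\psi_i^{-1})\subset\range(\psi_{i+1})=\dom(\psi_{i+1}^{-1})$ for each $i$, so the composite is legal on all of $\range(\psi_1)$; then Lemma~\ref{lem:use-for-1-1-correspondence}(ii), applied at the global maximum $x_1$ of the original orbit, gives $[x_1,1]\subset\dom(\psi)$. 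In the example above this produces $\psi=r_1^*\circ\ell_2^*$, which does contain $1$ in its domain, whereas your $\phi^{-1}=\ell_2^*\circ r_1^*$ does not. The reordering is the missing idea.
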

\begin{proof}
Suppose that $\phi$ is a legal function with signature $(m_1,\ldots,m_{N-1})$ and $0\in\dom(\phi)$. Let $\phi=\psi_k\circ\ldots\circ \psi_1$ be a decomposition of $\phi$ into legal functions $\psi_1,\ldots, \psi_k$ with the following property. Let $x_0=0, x_i=\psi_i\circ\ldots\circ \psi_1(0)$, $1\leq i\leq k$. The legal functions $\psi_1,\ldots, \psi_k$ are chosen such that $x_i=\psi_i(x_{i-1})$ is the maximum point of orbit of 
$x_{i-1}$ under $\psi_k\circ\ldots\circ \psi_i$ when $i$ is odd, and is the minimum point of orbit of $x_{i-1}$ under $\psi_k\circ\ldots\circ \psi_i$ when $i$ is even. 

We now prove that $\psi=\psi_k^{-1}\circ\ldots\circ \psi_1^{-1}$ is a legal function with $x_1\in \dom(\psi)$. From the definition of the $x_1$, we know that $x_1$ is the maximum point of the orbit of 0 under $\psi_k\circ\ldots\circ \psi_1$ Therefore, the maximum point of the orbit of $x_1$ under $\psi=\psi_k^{-1}\circ\ldots\circ \psi_1^{-1}$ is $x_1$. Then by Lemma \ref{lem:use-for-1-1-correspondence} we have that $1\in \dom(\psi)$. So $\psi(1)\in\Q$, and $\psi$ has  signature $(-m_1,\ldots,-m_{N-1})$.

To prove $x_1\in \dom(\psi)$ first observe that for $1\leq i\leq k-1$, 
$$\range(\psi_i^{-1})\subset\dom(\psi_{i+1}^{-1}).$$
Indeed, if $i$ is odd, then the orbits of $x_i$ under $\psi_i^{-1}$ and $\psi_{i+1}$ satisfy conditions of part (ii) of Corollary \ref{cor:dom-range-inclusion}. So $\range(\psi_i^{-1})\subset\range(\psi_{i+1})=\dom(\psi_{i+1}^{-1})$. On the other hand, if  $i$ is even, then the orbits of $x_i$ under $\psi_i^{-1}$ and $\psi_{i+1}$ satisfy conditions of part (i) of Corollary \ref{cor:dom-range-inclusion}. Therefore $\range(\psi_i^{-1})\subset\range(\psi_{i+1})=\dom(\psi_{i+1}^{-1})$, and we are done. 

Finally, suppose there are  legal functions $\phi$ and $\phi'$ with signature $(-m_1,\ldots,-m_{N-1})$ and $1\in\dom(\phi)\cap\dom(\phi')$. Then $\delta(\phi)=\delta(\phi')$, and by Corollary  \ref{cor:h-delta-increasing}, 
$\phi(1)=\phi'(1)$. This proves that the correspondence is one-to-one. 
The proof of the other side of the Lemma is analogous. 
\end{proof}


\subsection{Equivalence of intervals between points of $\P$ and $\Q$}
Some complications need to be dealt with before we can give the construction of $\pi$. For instance, the definition of $\pi$ on intervals $[x,y]$ and $[r^*_i(x),r^*_i(y)]$ are closely related. We therefore define an equivalence relation amongst certain intervals in $[0,1]$ to take such relations into account. Note that we are still in the settings of Assumption \ref{assump:suff}, but we cover both the case $\P=\Q$ and the case $\P\cap \Q=\emptyset$.

\begin{lemma}\label{lem:equivalent}
Let $w$ be as in  Assumption \ref{assump:suff}. 
 Let $\overline{\P\cup\Q}$ denote the closure of $\P\cup \Q$ in the usual topology of $[0,1]$. For a countable index 
set $I$ and pairwise disjoint open intervals $I_i$, we have $[0,1]\setminus (\overline{\P\cup\Q})=\cup_{i\in I}I_i$.   Let $\phi$ be a legal function.  Then,
\begin{itemize}
\item[(i)] If $\phi(I_i)\cap I_i\neq \emptyset$ then $\phi$ is the identity function on its domain. 
\item[(ii)] If $\phi(I_{i'})\cap I_{i}\neq \emptyset$ then $\phi(I_{i'})=I_{i}$.
\end{itemize}
\end{lemma}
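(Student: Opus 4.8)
The statement says that the intervals $I_i$ of the complement of $\overline{\P\cup\Q}$ behave rigidly under legal functions: either a legal function fixes $I_i$ pointwise (in fact is the identity on its whole domain) or it maps $I_i$ exactly onto another such interval. The plan is to leverage that legal functions are strictly increasing homeomorphisms of closed intervals whose endpoints lie in $\P$ and $\Q$ (Proposition \ref{prop:orbits-domain}), and that they act on $\P$ and $\Q$ by the one-to-one correspondences of Lemma \ref{lemma:domains-of-legal}. The key structural fact I would first record is that any legal function $\phi$, being strictly increasing and continuous, maps $\P\cap\dom(\phi)$ bijectively onto $\P\cap\range(\phi)$ and $\Q\cap\dom(\phi)$ bijectively onto $\Q\cap\range(\phi)$ (Lemma \ref{lemma:domains-of-legal}), and hence maps $(\P\cup\Q)\cap\dom(\phi)$ bijectively onto $(\P\cup\Q)\cap\range(\phi)$, preserving order. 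Since $\phi$ is a homeomorphism onto its (closed interval) range, it therefore also maps the closure $\overline{\P\cup\Q}\cap\dom(\phi)$ onto $\overline{\P\cup\Q}\cap\range(\phi)$, and consequently sends connected components of the complement (intersected with the interior of $\dom(\phi)$) to connected components of the complement (intersected with the interior of $\range(\phi)$).

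For part (ii): suppose $\phi(I_{i'})\cap I_i\neq\emptyset$. Pick $z\in I_{i'}$ with $\phi(z)\in I_i$. Since $I_{i'}$ is an open interval disjoint from $\overline{\P\cup\Q}$ and $z$ is an interior point of $\dom(\phi)$ (the endpoints of $\dom(\phi)$ lie in $\P\cup\Q$, so $I_{i'}$, being disjoint from $\P\cup\Q$, must sit inside the interior), a neighborhood of $z$ inside $I_{i'}$ maps into the complement of $\overline{\P\cup\Q}$; by connectedness and the homeomorphism property, the whole component of $z$ in $\dom(\phi)^\circ\setminus\overline{\P\cup\Q}$ maps onto the whole component of $\phi(z)$, i.e. $\phi(I_{i'})=I_i$ provided $I_{i'}$ is entirely contained in $\dom(\phi)$. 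That last containment is where I expect the only real subtlety: one needs that an interval $I_{i'}$ disjoint from $\P\cup\Q$ that meets $\dom(\phi)$ is in fact contained in $\dom(\phi)$. This follows because $\dom(\phi)=[p,q]$ with $p\in\P$, $q\in\Q\subseteq\overline{\P\cup\Q}$, so $[0,1]\setminus\dom(\phi)$ is contained (up to the two endpoints) in $\overline{\P\cup\Q}$; an interval meeting both $\dom(\phi)$ and its complement would have to contain $p$ or $q$, contradicting $I_{i'}\cap\overline{\P\cup\Q}=\emptyset$.

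For part (i): suppose $\phi(I_i)\cap I_i\neq\emptyset$. By part (ii), $\phi(I_i)=I_i$. So $\phi$ restricts to an order-preserving self-homeomorphism of $I_i$; in particular $\phi$ has a fixed point in $\overline{I_i}$ — for instance, if $\phi$ is not already the identity on $I_i$, then $\phi-\mathrm{id}$ has constant sign on $I_i$ is impossible once we note the endpoints of $I_i$ lie in $\overline{\P\cup\Q}$ and are themselves fixed: indeed each endpoint $e$ of $I_i$ is a boundary point of $\overline{\P\cup\Q}$, and since $\phi$ maps $\overline{\P\cup\Q}\cap\dom(\phi)$ onto itself preserving order and maps $I_i$ onto $I_i$, it must fix $e$. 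Thus $\phi(x)=x$ for some $x$ (namely $x=e\in\dom(\phi)$), and Condition (1) of Theorem \ref{thm:nec-suff-cond-N} (equivalently the hypothesis recorded in Assumption \ref{assump:suff}(1)) forces $\phi$ to be the identity on its entire domain. The main obstacle, as indicated above, is purely topological bookkeeping: making precise that a legal function carries $\overline{\P\cup\Q}$ to itself (on the relevant sub-interval) and hence permutes the complementary intervals — once that is in hand, both parts are short. I should be a little careful that $\P$ and $\Q$ need not be closed, so I work with $\overline{\P\cup\Q}$ throughout and use that $\phi$, being a homeomorphism onto a closed interval, commutes with closure.
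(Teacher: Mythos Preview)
Your argument is correct. For part (ii) you are doing essentially what the paper does: use Proposition \ref{prop:orbits-domain} to get $I_{i'}\subseteq\dom(\phi)=[p,q]$, use Lemma \ref{lemma:domains-of-legal} together with continuity of $\phi$ to see that $\phi$ carries $\overline{\P\cup\Q}\cap[p,q]$ onto $\overline{\P\cup\Q}\cap\range(\phi)$, and conclude that the endpoints of $\phi(I_{i'})$ coincide with those of $I_i$. The paper states this more tersely but the content is the same.

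For part (i) you take a genuinely different and shorter route than the paper. The paper argues by contradiction: assuming $\phi(x)>x$, it invokes Corollary \ref{cor:h-delta-increasing} to get $\delta(\phi)>0$ and hence $z<\phi(z)$ throughout $\dom(\phi)$, then iterates $\phi$ on the left endpoint $p\in\P$ of $\dom(\phi)$ to produce a finite strictly increasing orbit $p<\phi(p)<\cdots<\phi^M(p)$ lying in $\P$; since no orbit point can land in $I_i$, two consecutive ones straddle $I_i$, forcing $\phi(x)\notin I_i$. Your argument instead applies (ii) with $i'=i$ to obtain $\phi(I_i)=I_i$, so by monotonicity and continuity the endpoints $a_i,b_i\in\dom(\phi)$ are fixed by $\phi$, and Condition (1) of Assumption \ref{assump:suff} finishes. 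This is cleaner, and it has the bonus of not appealing to Corollary \ref{cor:h-delta-increasing}, whose statement in the paper carries the hypothesis $\P\cap\Q=\emptyset$ even though the present lemma is meant to cover the case $\P=\Q$ as well. The paper's approach, on the other hand, makes (i) independent of (ii) and exhibits a concrete dynamical obstruction via the orbit in $\P$.
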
 
\begin{proof}
First, we prove that there are no relations between the points of an interval $I_i$, $i\in I$, {\it i.e.} if $x\in I_i$ and $\phi$ is a legal function with $\phi(x)\in I_i$, then $\phi$ must be the identity function on its domain. Note that each $I_i$ is an open interval in $[0,1]$. Let $a_i=\inf\{x:\ x\in I_i\}$ and $b_i=\sup\{x:\ x\in I_i\}$. Then $a_i$ and $b_i$ belong to $\overline{\P\cup\Q}$. 
Fix $i$. Suppose that $x\in (a_i,b_i)$, and $\phi$ is a legal function such that $\phi(x)\in (a_i,b_i)$. Towards a contradiction, assume that $\phi(x)\neq x$. Without loss of generality, suppose $\phi(x)>x$. By Proposition \ref{prop:orbits-domain}, we have $\dom(\phi)=[p,q]$ with $p\in\P, q\in\Q$ and $ [a_i,b_i]\subseteq [p,q]$. By Corollary \ref{cor:h-delta-increasing}, $\delta(\phi)>0$ and for every $z\in \dom(\phi)$ we have $z<\phi(z)$. In particular, $\phi^i(p)<\phi^{i+1}(p)$, whenever $p\in \dom(\phi^{i+1}).$ 

Let $M$ denote the positive integer such that $p\in \dom(\phi^M)$ and $p\not\in\dom(\phi^{M+1})$. First, note that such an integer exists. Indeed, if $p\in \dom(\phi^j)$ for every positive integer $j$, then the increasing sequence  $\{\phi^j(p)\}_{j\in{\mathbb N}}$ lies inside $[p,q]$. Therefore, $p_0=\lim_{j\rightarrow \infty} \phi^j(p)$ lies inside $[p,q]=\dom(\phi)$ as well, and we have $\phi(p_0)=p_0$. But this is a contradiction with $\delta(\phi)>0$.  Clearly, for $M$ as above, we have
$$p<\phi(p)<\phi^2(p)<\ldots <\phi^M(p),$$
with $p<a_i$ and $\phi^M(p)>q_i$. Since $\{p,\phi(p),\ldots, \phi^M(p)\}\subseteq \P$, none of these points  lie inside $(a_i,b_i)$. Thus, there exists $1\leq i_0\leq M-1$ with $\phi^{i_0}(p)\leq a_i$ and 
$\phi^{i_0+1}(p)\geq b_i$. So, $\phi^{i_0}(p)<x<\phi^{i_0+1}(p)$. Since $\phi$ is strictly increasing, this implies that $\phi^{i_0+1}(p)<\phi(x)$, and in particular, $\phi(x)\not\in (a_i,b_i)$ which is a contradiction. Thus, we must have $\delta (\phi)=0$, and $\phi$ is the identity function on its domain. This proves (i).

Now suppose that $\phi$ is a legal function. Let $x\in I_{i'}=(a_{i'},b_{i'})$ and $\phi(x)\in I_i=(a_i,b_i)$. In particular, we have $x\in \dom(\phi)$. Thus, there exist $p\in \P$ and $q\in \Q$ such that $I_{i'}\subseteq [p,q]=\dom(\phi)$, since 
$I_{i'}\cap (\P\cup\Q)=\emptyset$. Since $\phi$ is a strictly increasing continuous function we have that $\phi(a_{i'})<\phi(x)<\phi(b_{i'})$. In particular, $\phi(I_{i'})=(\phi(a_{i'}),\phi(b_{i'}))$. Also, we know that $a_i$, $b_i$, $\phi(a_{i'})$ and $\phi(b_{i'})$ are points in $\overline{\P\cup\Q}$. 
Since for every $j$, $I_j\cap(\overline{\P\cup\Q})=\emptyset$ and $\phi(I_{i'})\cap I_i\neq\emptyset$, then $\phi(a_{i'})= a_i$ and $\phi(b_{i'})= b_i$. Thus, 
$\phi(I_{i'})=I_i$.
\end{proof}

We say $i\sim i'$ if there exists a legal function $\phi$ such that $I_i\cap \phi(I_{i'})\neq \emptyset$ (or equivalently if $\phi(I_{i'})=I_i$). The relation $\sim$ is an equivalence relation. Consider the equivalence classes produced by $\sim$. For each $i$, we denote the equivalence class of $I_i$ by $[I_i]$. 
\subsection{Conditions of Theorem \ref{thm:nec-suff-cond-N} are sufficient.}
In this subsection we prove the sufficiency of the conditions of Theorem \ref{thm:nec-suff-cond-N} by constructing a uniform linear embedding $\pi:[0,1]\rightarrow \Rrr$
when a function $w$ satisfies the conditions of the theorem as given in Assumption \ref{assump:suff}.

Define $\pi$ first on $\P$ by $\pi(x)=\delta(\phi)$, where $\phi$ is a legal function with $\phi(0)=x$.
By Corollary \ref{cor:h-delta-increasing}, if there are two legal functions $\phi_1, \phi_2$ with $\phi_1(0)=\phi_2(0)=x$ then $\delta(\phi_1)=\delta(\phi_2)$. Thus $\pi$ is well-defined on $\P$. Moreover (2a) tells us that $\pi$ is strictly increasing on $\P$. 

Next, we will extend $\pi$ to a strictly increasing function on $\P\cup\Q$. If $\P=\Q$ then there is nothing to do. So assume that $\P\cap\Q=\emptyset$. Let 
\begin{eqnarray*}
m&=&\sup\{\delta(\phi)| \phi \mbox{ is a legal function and } 0\in\dom(\phi) \}\\
M&=&\min_{1\leq i\leq N-1}\, \inf\{d_i-\delta(\psi)|\, 1\in\dom(\psi), \psi(1)<r_i^*(0)\}
\end{eqnarray*}
By Condition (2b) of Theorem \ref{thm:nec-suff-cond-N},  $m\leq M$. Choose $\pi(1)$ as follows: If  $m=M$, then let $\pi^-(1)=\pi(1)=m$.  Otherwise, choose $\pi^-(1)=\pi(1)\in(m,M)$. Observe that
if $\phi$ and $\psi$ are legal functions with $0\in \dom(\phi)$, $1\in \dom(\psi)$, and $\psi(1)<r_i^*(0)$ then $\delta(\phi)<\pi(1)<d_i-\delta(\psi)$. This is clear when $m<M$. In the case where $m=M$, we have $\pi(1)=m=M=a$, and the desired inequality is given by Condition (2b).  


Now define $\pi$ on $\Q$ to be $\pi(y)=\pi(1)+\delta(\phi)$, where $\phi$ is a legal function with $y=\phi(1)$.
Let $y\in\Q$, and $\phi_1,\phi_2$ be legal functions with $\phi_1(1)=\phi_2(1)=y$. Then by Corollary \ref{cor:h-delta-increasing}, $\delta(\phi_2)=\delta(\phi_1)$. Also, note that $\P\cap\Q=\emptyset$, so the function $\pi$ as defined on $\Q$ is well-defined.  Moreover  $\pi$ is strictly increasing on $\Q$ by Lemma \ref{lem:delta-increasing}. 
\begin{claim}
$\pi$ is increasing on $\P\cup\Q$.
\end{claim}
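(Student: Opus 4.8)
The plan is to show that $\pi$, which has now been defined separately on $\P$ (via $\pi(x)=\delta(\phi)$ for $\phi(0)=x$) and on $\Q$ (via $\pi(y)=\pi(1)+\delta(\phi)$ for $\phi(1)=y$), respects the natural order of $[0,1]$ across the two sets. Since we already know $\pi$ is strictly increasing on $\P$ alone (by (2a)) and strictly increasing on $\Q$ alone (by Lemma \ref{lem:delta-increasing}), the only thing left to verify is the mixed case: if $x\in\P$, $y\in\Q$, and $x<y$, then $\pi(x)<\pi(y)$; and symmetrically if $y<x$ then $\pi(y)<\pi(x)$. (Recall $\P\cap\Q=\emptyset$ in the case at hand.)

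First I would fix legal functions $\phi$ with $\phi(0)=x$ and $\psi$ with $\psi(1)=y$, so that $\pi(x)=\delta(\phi)$ and $\pi(y)=\pi(1)+\delta(\psi)$. By Lemma \ref{lem:new-1-1-correspondence}, there is a legal function $\psi'$ with $1\in\dom(\psi')$ whose signature is the negative of that of $\psi$, equivalently $\delta(\psi')=-\delta(\psi)$; then $\psi'(1)$ is the unique point of $\Q$ ``matched'' to... actually more useful: $\psi^{-1}$ is a legal function with $\psi^{-1}(y)=1$, and $\delta(\psi^{-1})=-\delta(\psi)$. Now compare $x$ and $y$ using the boundary data. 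The key inequality to extract is the one encoded in Condition (2b): if $y<r_i^*(0)$ for the appropriate $i$ (i.e. $w(0,y)$ lies at the relevant level), then $\delta(\psi^{-1})<a<d_i-\delta(\text{stuff})$; but it is cleaner to argue directly. Suppose $x<y$. I want $\delta(\phi)<\pi(1)+\delta(\psi)$, i.e. $\delta(\phi)-\delta(\psi)<\pi(1)$, i.e. $\delta(\phi)+\delta(\psi^{-1})<\pi(1)$. Consider the composition $\psi^{-1}\circ(\text{a legal function carrying } x \text{ somewhere})$ — more precisely, since $x<y$ and $\psi^{-1}(y)=1$, and $\psi^{-1}$ is strictly increasing, we get $\psi^{-1}(x)<1$, so $\psi^{-1}(x)\in\P$ with $\psi^{-1}\circ\phi$ a legal function realizing it, of displacement $\delta(\phi)+\delta(\psi^{-1})$. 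Then $\psi^{-1}(x)<1$ combined with the definition of $m$ (the sup of displacements of legal functions based at $0$) gives $\delta(\phi)+\delta(\psi^{-1})\le m\le\pi(1)$, with the inequality strict unless boundary cases intervene; injectivity/strict monotonicity of $\pi$ on $\P$ upgrades $\le$ to $<$. The symmetric case $y<x$ is handled the same way with the roles of $0$ and $1$ and of $m$, $M$ swapped: one uses $M$, the infimum controlling $\pi(1)$ from above, together with the matching given by Lemma \ref{lem:new-1-1-correspondence} and Corollary \ref{cor:h-delta-increasing}.

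The main obstacle I anticipate is handling the boundary between strict and non-strict inequality at the ``endpoints'' $m$ and $M$ — i.e. making sure that when $\psi^{-1}(x)$ is close to $1$ (so $\delta(\phi)+\delta(\psi^{-1})$ is close to $m$) we still get strict inequality $\pi(x)<\pi(y)$, and correspondingly on the $\Q$ side. This is exactly why $\pi(1)$ was chosen in the half-open interval $(m,M)$ when $m<M$, and set equal to the common value (with the parameter $a$ doing the separating work) when $m=M$; I would invoke the displayed observation just above the Claim, namely that for legal $\phi,\psi$ with $0\in\dom(\phi)$, $1\in\dom(\psi)$, and $\psi(1)<r_i^*(0)$ one has $\delta(\phi)<\pi(1)<d_i-\delta(\psi)$, to close the gap. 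A secondary point requiring a little care is that a point of $\P$ could in principle be reached by a legal function that ``touches'' $1$, but since $\P\cap\Q=\emptyset$ this cannot happen — every legal $\phi$ with $0\in\dom(\phi)$ has $\dom(\phi)=[0,q]$ with $q\in\Q$ and $\phi$ applied to $0$ never touches $1$ — so Corollary \ref{cor:h-delta-increasing} and the well-definedness arguments already in place apply cleanly, and the comparison reduces to the displacement inequalities above.
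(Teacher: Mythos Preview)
Your overall strategy matches the paper's, and your treatment of the case $x\in\P$, $y\in\Q$, $x<y$ is essentially the paper's argument: form $\psi^{-1}\circ\phi$, observe its displacement is bounded by $m$, and use the choice of $\pi(1)$. However, there is a genuine gap here that you do not flag: when you write ``since $x<y$ and $\psi^{-1}(y)=1$ \ldots we get $\psi^{-1}(x)<1$'', you are tacitly assuming $x\in\dom(\psi^{-1})=\range(\psi)$. But $\range(\psi)=[\psi(p),y]$ where $\dom(\psi)=[p,1]$, and it can certainly happen that $x<\psi(p)$, in which case $\psi^{-1}\circ\phi$ is not a legal composition at $0$. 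The paper handles this explicitly as a separate subcase, comparing $x$ to $\psi(p)\in\P$ and using monotonicity of $\pi$ on $\P$.

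The more serious gap is your dismissal of the case $y<x$ as ``symmetric with the roles of $0$ and $1$ and of $m$, $M$ swapped''. It is not symmetric in the way you suggest: to bound $\pi(1)$ from above by $\delta(\phi)-\delta(\psi)$ you must exhibit a concrete $i$ and a legal function $\eta$ with $1\in\dom(\eta)$, $\eta(1)<r_i^*(0)$, and $d_i-\delta(\eta)\le\delta(\phi)-\delta(\psi)$. The paper obtains this by decomposing $\phi=f_1\circ\cdots\circ f_s$, noting that $f_s=r_i^*$ for some $i$ (since $0\in\dom(\phi)$), and then applying $\phi_1^{-1}=(f_1\circ\cdots\circ f_{s-1})^{-1}$ to $y$ to land below $r_i^*(0)$. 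When $y\notin\range(\phi_1)$, yet another decomposition step is needed, tracking where along the orbit the domain fails and picking up a different $r_j^*$. None of this structure is visible from a bare appeal to symmetry or to Lemma~\ref{lem:new-1-1-correspondence}, and you would need to supply it.
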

\begin{proof}[Proof of claim]
Recall that we are assuming $\P\cap \Q=\emptyset$. We consider two cases:\\

\noindent{\bf Case 1:} Assume $x\in \P$, $y\in \Q$, $x<y$. Let $\phi$ and $\psi$ be legal functions with $\phi(0)=x$ and $\psi(1)=y$. By Proposition \ref{prop:orbits-domain}, $\dom(\psi)=[p,1]$ and $\range(\psi)=[\psi(p),\psi(1)]$ where $p\in\P$. If $x=\phi(0)\in \range(\psi)$, then $\psi^{-1}\circ \phi(0)\in\P$. Thus, by choice of $\pi(1)$ we have $\delta(\phi)-\delta(\psi)<\pi(1)$. This implies that $\pi(x)=\delta(\phi)<\pi(1)+\delta(\psi)=\pi(y)$ and we are done. Now assume that $x\notin\range(\psi)$. Then $x<\psi(p)$. By a similar argument, we have  $\pi(\psi(p))<\pi(y)$. Also, since $\pi$ is strictly increasing on $\P$, $\pi(x)<\pi(\psi(p))$ and thus $\pi(x)<\pi(y)$. This completes the proof of the claim for this case.\\

\noindent{\bf Case 2:} Assume $x\in \P$, $y\in \Q$, $y<x$. Let $\phi=f_1\circ\ldots\circ f_s$  be a legal composition with $\phi(0)=x$.  Since $0$ belongs to the 
domain of $\phi$, we know that $f_s$ is an upper boundary function, say $r^*_i$. Let $\phi_1:=f_1\circ\ldots\circ f_{s-1}$. If $y\in \range(\phi_1)= \dom(\phi_1^{-1})$ then $\phi_1^{-1}\circ \psi(1)<r_i^*(0)$, where $\psi$ is a legal function with $\psi(1)=y$. By the choice of $\pi(1)$ we have $\pi(1)<d_i-(\delta(\psi)+\delta(\phi_1^{-1}))$. Therefore, $\pi(1)+\delta(\psi)<d_i+\delta(\phi_1)=\delta(\phi)$ and thus $\pi(y)<\pi(x)$.

Let us now assume that $y\notin \dom(\phi_1^{-1})$, $\phi_1^{-1}=f_{s-1}^{-1}\circ\ldots\circ f_1^{-1}$. Thus there exists $0\leq t\leq s-2$ such that $\eta_t(y):=f_{t}^{-1}\circ\ldots\circ f_0^{-1}(y)\notin \dom(f_{t+1}^{-1})$ where $f_0$ is the identity function. Since $y<x$ and $x\in \dom(\phi^{-1})$, this implies that 
$f_{t+1}^{-1}$ must be a lower boundary function, say $\ell^*_j$. Also $\eta_t(y)<r_j^*(0)$. Hence by the choice of $\pi(1)$ we have $\pi(1)<d_j-(\delta(\psi)+\delta(\eta_t))$. On the other hand, since  $\eta_t(x)\in \dom(f_{t+1}^{-1})$, we have $r_j^*(0)\leq \eta_t(x)$. Therefore $d_j\leq \delta(\phi)+\delta(\eta_t)$, as $\pi$ is strictly increasing on $\P$. Thus, $\pi(1)+\delta(\psi)<d_j\leq \delta(\phi)$, {\it i.e.} $\pi(y)<\pi(x)$, and we are done.
\end{proof}

We now extend $\pi$ to $\overline{\P\cup\Q}$, and prove that this is a well-defined process, {\it i.e.} for $x\in\overline{\P\cup\Q}\setminus \P\cup\Q$ the value of $\pi(x)$ is independent of the choice of the sequence converging to $x$. 

Let $\{x_n\}$ be a sequence in $\P\cup\Q$ converging to a limit point $x$. Since $\pi$
is increasing on $\P\cup\Q$ we have, $\lim _{i\in I_1} \pi(x_i)=\sup_{i\in I_1}\pi(x_i)$ and $\lim_{j\in I_2}\pi(x_j)=\inf_{j\in I_2}\pi(x_j)$, where  $I_1$ (respectively  $I_2$) is the set of positive integers $i$ with $x_i\leq x$ (respectively $x_i> x$). Define $\pi(x)=\sup_{i\in I_1}\pi(x_i)$, if there exists a sequence in $\P\cup\Q$ converging to x from the left. Otherwise, define $\pi(x)=\inf_{j\in I_2}\pi(x_j)$. This extension is well-defined. 
Indeed, let $\{x_n\}$ and $\{y_n\}$ be two sequences in $\P\cup \Q$ which converge to $x\in[0,1]\setminus (\P\cup\Q)$. Without loss of generality, assume that $\{x_n\},\{y_n\}\in[0,x]$. Since $\pi$ is increasing, it is
easy to see that $\sup\{\pi(x_n):n\in\Nnn\}=\sup\{\pi(y_n):n\in\Nnn\}$. Using a similar argument for infimum, we conclude that the extension is well-defined. 
Moreover if a sequence $\{x_n\}$ in $\P\cup\Q$ converges to $x\in\overline{\P\cup\Q}\setminus \P\cup\Q$ from the left, then $\{\ell_j^*(x_n)\}$ (respectively $\{r_j^*(x_n)\}$) converges to $\ell_j^*(x)$(respectively $r_j^*(x)$) from the left as well. Therefore the function $\pi$ defined on $\overline{\P\cup\Q}\setminus \P\cup\Q$ satisfies properties of a uniform linear embedding, {\it i.e.} Conditions (\ref{eq:link-r}) and (\ref{eq:link-l}).

We now define $\pi$ on $[0,1]\setminus \overline{\P\cup\Q}$ using the equivalence relation defined in Lemma \ref{lem:equivalent}. First recall that 
$[0,1]\setminus\overline{\P\cup\Q}=\cup_{i\in I} I_i$ for a countable index set $I$. As discussed in the proof of Lemma \ref{lem:equivalent}, $I_i=(a_i,b_i)$ where $a_i,b_i\in \overline{\P\cup \Q}$.
For each equivalence class $[I_i]$, proceed as follows. First, pick a representative $I_i$ for $[I_i]$, and define $\pi$ on $I_i$ to be the linear function with $\pi(a_i)$ and $\pi(b_i)$ as defined earlier. Next, for every $I_j\in [I_i]$, let $\phi$ be a legal function such that $\phi(I_j)=I_i$, equivalently
 $a_i=\phi(a_j)$ and $b_i=\phi(b_j)$.
For every $x\in I_j=(a_j,b_j)$, we define $\pi(x)$ according to the definition of $\pi$ on $I_i$, {\it i.e.},
$$\pi(x)=\pi(\phi(x))-\delta(\phi).$$
Thus $\pi$ extends to a strictly increasing function on $[0,1]$ which gives us the desired uniform linear embedding. 

\subsection{Construction of $\pi$: examples}

In this section, we illustrate the construction of a uniform linear embedding with an example. 
We first show with an example that there are indeed diagonally increasing functions with uniform embedding which produce infinite  $\P\cup \Q$, even in the case of a three-valued function $w$.

\subsubsection*{Example 2} For $i=1,2$ let $r_i(x)=x+b_i$, where $0< b_1< b_2<\frac{1}{2}$. Moreover assume that $\frac{b_1}{b_2}$ is irrational. We produce a sequence $x_i\in \P_i$ inductively. Let $x_0=0$ and $x_1=r_2(0)=b_2$.
For each $i>1$, define 
$$x_i=\left\{
\begin{array}{cc}
r_2(x_{i-1})=x_{i-1}+b_2 & \mbox{ if } x_{i-1}<\frac{1}{2}  \\
\ell_1(x_{i-1})=x_{i-1}-b_1&   \mbox{ if } x_{i-1}\geq \frac{1}{2}  
\end{array}
\right.$$
Clearly, $x_i\in \P_i$ as it always lies in $(0,1)$. Also each $x_i$ is in the form of $m_ib_2-n_ib_1$ for positive integers $n_i,m_i$. Moreover, at each step we increase the value of either $m_i$ or $n_i$ 
by exactly 1. Thus, $m_i+n_i=i$ for every $i$. It is easy to observe that $x_i$'s are all distinct. Indeed, if $x_i=x_j$ for positive integers $i,j$, then $b_1(n_i-n_j)=b_2({m_i-m_j})$, which is a
contradiction.


The following example considers a diagonally increasing function $w$ with a finite set of constrained points. 

\subsubsection*{Example 3}
Let $w$ be a well-separated diagonally increasing $\{\alpha_1,\alpha_2,\alpha_3\}$-valued function with $\alpha_1>\alpha_2>\alpha_3$ and the following boundary functions.
\begin{figure}[ht]
\centerline{\includegraphics[width=0.45\textwidth]{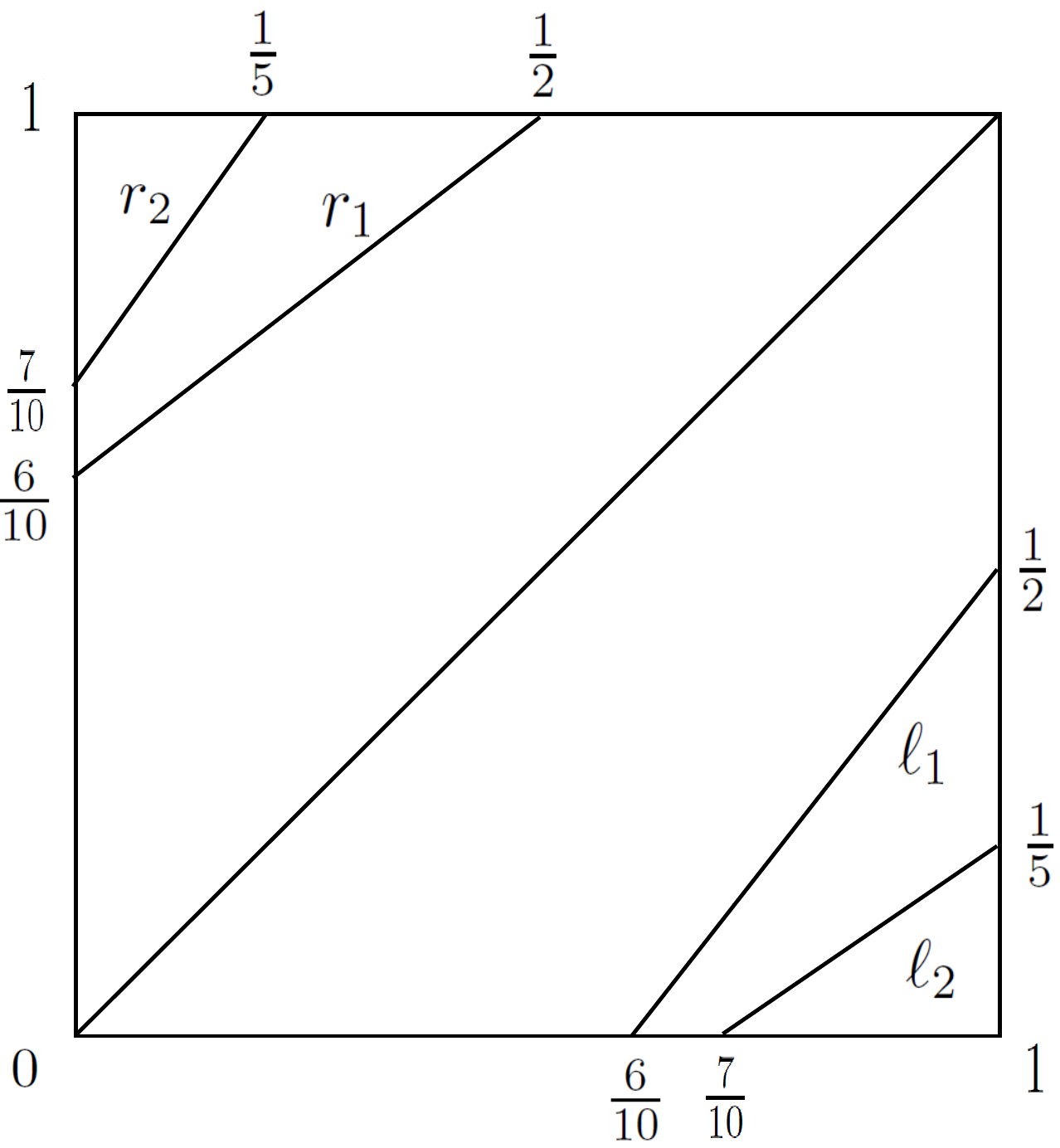}}
\end{figure}

$$r_1(x)=\left\{
\begin{array}{cc}
\frac{8}{10}x+\frac{6}{10}& x\in[0,\frac{1}{2}]  \\
  1   &  x\in [\frac{1}{2},1]\\
\end{array}
\right.,
\ell_1(x)=\left\{
\begin{array}{cc}
0 & x\in[0,\frac{6}{10}]  \\
\frac{10}{8}(x-\frac{6}{10})  &  x\in [\frac{6}{10},1]\\
\end{array}
\right.$$

$$r_2(x)=\left\{
\begin{array}{cc}
\frac{3}{2}x+\frac{7}{10}& x\in[0,\frac{1}{5}]  \\
  1   &  x\in [\frac{1}{5},1]\\
\end{array}
\right.,
\ell_2(x)=\left\{
\begin{array}{cc}
0 & x\in[0,\frac{7}{10}]  \\
\frac{2}{3}(x-\frac{7}{10})  &  x\in [\frac{7}{10},1]\\
\end{array}
\right.$$
{\bf Generate $\P$ and $\Q$:} We first find the set of constrained points of $w$.
\begin{equation*}
\P=\{0, r_1^*(0), r_2^*(0), \ell_1^*r_2^*(0), r_2^*\ell_1^*r_2^*(0), \ell_1^*r_2^*\ell_1^*r_2^*(0)\}\\
=\{0, \frac{6}{10}, \frac{7}{10}, \frac{1}{8}, \frac{71}{80}, \frac{23}{64}\}
\end{equation*}
\begin{equation*}
\Q=\{1, \ell_1^*(1), \ell_2^*(1), r_1^*\ell_2^*(1), \ell_2^*r_1^*\ell_2^*(1), r_1^*\ell_2^*r_1^*\ell_2^*(1)\}\\
=\{1, \frac{1}{2}, \frac{1}{5}, \frac{38}{50}, \frac{1}{25}, \frac{158}{250}\}
\end{equation*}

{\bf Check Condition (2) of Theorem \ref{thm:nec-suff-cond-N}}: We now check if Conditions (2a) and (2b) of Theorem \ref{thm:nec-suff-cond-N} holds. The order of elements of $\P$ is $0<\frac{1}{8}<\frac{23}{64}<\frac{6}{10}<\frac{7}{10}< \frac{71}{80}$. Hence Condition (2a) holds if there are real numbers $d_2>d_1>0$ such that

$$0<d_2-d_1<2d_2-2d_1<d_1<d_2<2d_2-d_1 $$ 

This gives us the following system of inequalities.

\begin{eqnarray*}
d_1-d_2&<&0\\
2d_2-3d_1&<&0
\end{eqnarray*}

$d_1=1$ and $d_2=\frac{5}{4}$ is a solution to the above system of inequalities.
Now let us check Condition (2b). We have 
\begin{eqnarray*}
\frac{1}{25}, \frac{1}{5}, \frac{1}{2}&<&\frac{6}{10}=r_1^*(0)\\
\frac{1}{25}, \frac{1}{5}, \frac{1}{2}, \frac{158}{250}&<&\frac{7}{10}=r_2^*(0)
\end{eqnarray*}

The system of inequalities obtained from the above inequalities is the following

\begin{eqnarray*}
d_1-d_2&<&0\\
2d_2-3d_1&<&0
\end{eqnarray*}

which is the same as the system of inequalities obtained from Condition (2a). Therefore $d_1=1$ and $d_2=\frac{5}{4}$ satisfy Conditions (2a) and (2b).

{\bf Construct $\pi$:} We now construct the function $\pi$ based on the proof of the Theorem \ref{thm:nec-suff-cond-N}. For $d_1=1$ and $d_2=\frac{5}{4}$, we have $m=\frac{6}{4}$ and $M=\frac{7}{4}$. Pick $\pi(1)=\frac{13}{8}$.
The equivalence classes of lemma \ref{lem:equivalent} for the function $w$ are:
\begin{eqnarray*}
[I_1]&=&\left\{(0,\frac{1}{25}), (\frac{1}{8},\frac{1}{5}), (\frac{23}{64},\frac{1}{2}), (\frac{6}{10},\frac{158}{250}), (\frac{71}{80},1)\right\},\\
\left[I_2\right]&=&\left\{(\frac{1}{25},\frac{1}{8}), (\frac{1}{5},\frac{23}{64}), (\frac{158}{250},\frac{7}{10}), (\frac{38}{50},\frac{71}{80})\right\},\\
\left[I_3\right]&=&\left\{(\frac{1}{2},\frac{6}{10})\right\}.\\
\end{eqnarray*}
$$\pi(x)=\left\{
\begin{array}{cc}
\frac{25}{8}x& x\in[0,\frac{1}{25}]\\
\frac{25}{17}(x-\frac{1}{8})+\frac{1}{4}&x\in[\frac{1}{25},\frac{1}{8}]\\
\frac{5}{3}x+\frac{1}{24}& x\in[\frac{1}{8},\frac{1}{5}]\\
\frac{5}{51}(8x-\frac{23}{8})+\frac{1}{2}& x\in[\frac{1}{5},\frac{23}{64}]\\
\frac{1}{9}(8x-1)+\frac{7}{24}& x\in[\frac{23}{64},\frac{1}{2}]\\
\frac{15}{4}(x-\frac{6}{10})+1& x\in[\frac{1}{2},\frac{6}{10}]\\
\frac{250}{64}(x-\frac{6}{10})+1& x\in[\frac{6}{10},\frac{158}{250}]\\
\frac{25}{136}(10x-7)+\frac{5}{4}& x\in[\frac{158}{250},\frac{7}{10}]\\
\frac{50}{24}(x-\frac{7}{10})+\frac{5}{4}& x\in[\frac{7}{10},\frac{38}{50}]\\
\frac{25}{51}(2x-\frac{71}{40})+\frac{3}{2}& x\in[\frac{38}{50},\frac{71}{80}]\\
\frac{10}{9}(x-\frac{7}{10})+\frac{31}{24}& x\in[\frac{71}{80},1]\\
\end{array}
\right .
$$

\section*{Acknowledgements}
The first author acknowledges the support of the Killam fellowship, and the third author acknowledges
support of NSERC.
This work was done during the second author's visits to Dalhousie University,  in June 2014 and May 2015. She thanks the Department of Mathematics and Statistics at Dalhousie for their hospitality.




\begin{thebibliography}{10}



\bibitem{borgsI2008}
C. Borgs, J. T. Chayes, L. Lov\'{a}sz,  V.~T. S{\'o}s, and K. Vesztergombi.
\newblock Convergent sequences of dense graphs I: subgraph frequencies, metric
  properties and testing.
\newblock {\em Adv. Math.}, 219(6):1801–--1851, 2008.

\bibitem{borgsII2007}
C. Borgs, J. T. Chayes, L. Lov\'{a}sz,  V.~T. S{\'o}s, and K. Vesztergombi.
\newblock Convergent sequences of dense graphs II: Multiway cuts and
  statistical physics.
\newblock {\em Ann. of Math. (2)}, 176(1):151--219, 2012.
 		
\bibitem{borgs2011}
C. Borgs, J. T. Chayes, L. Lov\'{a}sz,  V.~T. S{\'o}s, and K. Vesztergombi.
\newblock Limits of randomly grown graph sequences.
\newblock {\em Eur. J. Comb.}, (7):985--999, 2011.

\bibitem{hoda-thesis}
H.~Chuangpishit.
\newblock Geometric embeddings of graphs and random graphs.
\newblock {\tt https://sites.google.com/site/hudachuangpishit/publications}
\newblock {\em PhD thesis}, Dalhousie University, 2015.

\bibitem{linearembeddings}
H.~Chuangpishit, M.~Ghandehari, M.~Hurshman, J.~Janssen, N.~Kalyaniwalla.
\newblock Linear embeddings of graphs and graph limits.
\newblock {\em J.~Combin.~Th.~B}, 113:162--184, 2015.



\bibitem{diaconis2008}
P. Diaconis, and S. Janson.
\newblock Graph limits and exchangeable random graphs.
\newblock {\em Rendiconti di Matematica}, 28:33-–61, 2008.

\bibitem{diaconis2011}
P.~Diaconis, S.~Holmes and S.~Janson.
\newblock Interval graph limits.
\newblock {\em Ann. Comb.}, 17(1):27--52, 2013.


\bibitem{Gacs-Lovasz-linInequality}
P. G{\'a}cs, and L. Lov{\'a}sz.
\newblock Khachiyan's algorithm for linear programming.
 \newblock {\em  Mathematical Programming Studies}, 14:61--68, 1981.

\bibitem{gardi}
F. Gardi.
\newblock The {R}oberts characterization of proper and unit interval graphs.
\newblock {\em Discrete Math.}, 307(22):2906--2908, 2007.


\bibitem{lovaszbook}
L. Lov\'{a}sz.
\newblock Large networks and graph limits.
 \newblock {\em  American Mathematical Society Colloquium Publications}, (60), 2012.
   
\bibitem{lovaszszegedy2006}
L. Lov{\'a}sz, and B. Szegedy.
\newblock Limits of dense graph sequences.
\newblock {\em J. Combin. Theory Ser. B}, (6):933--957, 2006.


\end{thebibliography}

\end{document}